\def\zit{\mathbb{Z}}   
\def\nit{\mathbb{N}}
\def\ppit{\mathbb{P}} 
\def\cit{\mathbb{C}}
\def\yit{\mathbb{Y}}
\newcommand{\pf}{{\em Proof.~}}
\newcommand{\qed}{\hfill~~\mbox{$\Box$}}
\newenvironment{proof}{\smallskip \noindent \pf}{\qed \bigskip}
\newtheorem{theorem}{Theorem}[subsection]
\newtheorem{proposition}[theorem]{Proposition}
\newtheorem{lemma}[theorem]{Lemma}
\newtheorem{corollary}[theorem]{Corollary}
\newtheorem{remark}[theorem]{Remark}
\newtheorem{example}[theorem]{Example}
\newtheorem{conjecture}[theorem]{Conjecture}
\DeclareMathOperator{\Rang}{Rank}
\DeclareMathOperator{\coker}{coker}
\DeclareMathOperator{\diag}{diag}
\DeclareMathOperator{\codim}{codim}
\DeclareMathOperator{\im}{im}
\DeclareMathOperator{\grad}{grad}
\DeclareMathOperator{\Irr}{Irr}
\DeclareMathOperator{\Reg}{Reg}
\begin{document}

\title{\bf Gauss-Manin systems of wild regular functions: Givental-Hori-Vafa models of smooth hypersurfaces in weighted projective spaces as an example}
\author{\sc Antoine Douai \thanks{Partially supported by the grant ANR-13-IS01-0001-01 of the Agence nationale de la recherche.
Mathematics Subject Classification 32S40 14J33 34M35}\\
Laboratoire J.A Dieudonn\'e, UMR CNRS 7351, \\
Universit\'e de Nice, Parc Valrose, F-06108 Nice Cedex 2, France \\
Email address: Antoine.DOUAI@unice.fr}

\maketitle

\begin{abstract}
We study Gauss-Manin systems of non tame Laurent polynomial functions. We focuse on Givental-Hori-Vafa 
models, which are the expected mirror partners of the small quantum cohomology of smooth hypersurfaces in weighted projectives spaces.
\end{abstract}

\section{Introduction}

An interesting feature of mirror symmetry is that it suggests the study of new, and sometimes unexpected, phenomena, on the A-side (quantum cohomology) as well on the B-side 
(singularities of regular functions). 
From this point of view, the case of (the contribution of the ambient part to) the small quantum cohomology of smooth hypersurfaces 
in weighted projective spaces is particularly significant and leads to 
the study of a remarkable class of regular functions on the torus, the Givental-Hori-Vafa models, see \cite{Giv0},
\cite{HV}, \cite{P} and section \ref{sec:ModelesHV}. 
The main observation is that, unlike the usual absolute situation, considered for instance in \cite{Do1}, \cite{DoMa}, \cite{DoSa1}, \cite{DoSa2}, \cite{S},  such functions are not tame 
and may have some singular points at infinity
(in words, $f$ is tame if the set outside which $f$ 
is a locally trivial fibration is made from critical values of $f$ and that these critical values belong to this set only because of the 
critical points at 
finite distance, see section \ref{sec:CyclesEvInfini}): therefore a geometric situation requires the study of wild functions, and this can be done 
for instance using the previous works of Dimca-Saito\cite{DS} and Sabbah \cite{S}.
One aim of these notes is to enlighten this interaction between singularities of functions, Gauss-Manin systems, 
smooth hypersurfaces in weighted projective spaces, quantum cohomology and to connect rather classical results in various domains. 
For instance, it's worth to note that an arithmetical condition that ensures the smoothness of a hypersurface in a weighted projective space 
gives also a
number of vanishing cycles at infinity for the expected mirror partner, see sections \ref{sec:Conjectures} and \ref{sec:ProbBir}.

We proceed as follows: in a first part, we focuse on Gauss-Manin systems of regular functions, emphasizing their relations with 
 singular points (including at infinity), see sections 
\ref{sec:CyclesEvInfini} and \ref{sec:F(GM)}. 
It happens that, and this is a major difference with the tame case, the Brieskorn module of a Givental-Hori-Vafa model $f$ is not of finite 
type, and this is due to some vanishing cycles produced by singular points at infinity, see section \ref{sec:F(GM)}.
We discuss an explicit characterization of these singular points at infinity. 
Notice that the situation is slightly different from the classical polynomial case considered in \cite{Br}, 
\cite{Paru} for instance: as our Givental-Hori-Vafa model are Laurent polynomials,
we have also to take into account the singular points on the polar locus of $f$ at finite distance. Fortunately, the results in \cite{ST}, \cite{T} fit in very well with this situation.

In a second part, we are interested in the following formulation of mirror symmetry:
above the small quatum cohomology of a degree $d$ hypersurface in a projective space (and we consider here only the contribution of the 
ambient space to the small quantum cohomology, see \cite{CK}, \cite{Giv}, \cite{MM} and section \ref{sec:PCQ}) and above a Givental-Hori-Vafa model on the $B$-side, we make grow a quantum differential system 
in the sense of \cite{Do1}, \cite{DoMa}. Two models will be mirror partners if their respective quantum differential systems are isomorphic. 
On the $B$-side, the expected quantum differential system
can be constructed solving a Birkhoff problem for the Givental-Hori-Vafa model alluded to, see
section \ref{sec:BirHV}.
In the tame case, this bundle is provided by the Brieskorn module,
which is in this situation free of finite rank as it
follows from the tameness assumption. 
But, and as previously noticed, this will be certainly not the case for Givental-Hori-Vafa models of smooth hypersurfaces, and we have to imagine something else.
We give a general result in this way for quadrics in $\ppit^n$, and this was, after \cite{GS}, one of the triggering factors of this paper.
Precisely,
let $G$ be the (localized Fourier transform of the)  Gauss-Manin system of the  
Givental-Hori-Vafa model $f$ of a smooth quadric in $\ppit^n$, see sections \ref{sec:F(GM)} and \ref{sec:ModelesHV}. It turns out that
$G$ is a connection 
and we show in section \ref{sec:QuadPnB}  the following result:
\begin{theorem}\label{theo:Principal}
We have a direct sum decomposition 
\begin{equation}\nonumber
G=H\oplus H^{\circ}
\end{equation}
of free modules where
$H$ is free of rank $n$ and is equipped with a connection making it isomorphic to the 
differential system associated with the small quantum cohomology of quadrics in $\ppit^n$.
\end{theorem}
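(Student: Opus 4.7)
The plan is to work directly with the explicit Givental-Hori-Vafa model $f$ of a smooth quadric in $\ppit^n$ introduced in section \ref{sec:ModelesHV}, to write down a presentation of the Fourier transform $G$ of its Gauss-Manin system along the lines of sections \ref{sec:F(GM)} and \ref{sec:BirHV}, and to exhibit a concrete basis of $G$ that realizes the claimed splitting.

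First, I would give an explicit description of $G$ as a $\cit[\tau,\tau^{-1}]$-module with connection. Because $f$ is not tame, the Brieskorn module has extra generators coming from the vanishing cycles at infinity identified in section \ref{sec:CyclesEvInfini}, so a key preliminary step is to single out these ``cycles at infinity'' and separate them from the cycles of ``geometric'' origin expected to correspond to the ambient quantum cohomology of the quadric. One also has to verify that $G$ really is a connection, which can be read off from the location of its singular points computed by the methods of section \ref{sec:F(GM)}.

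Next, I would solve the Birkhoff problem discussed in section \ref{sec:BirHV}. Concretely, I would exhibit $n$ relative differential forms $\omega_{0},\omega_{1},\ldots,\omega_{n-1}$, adapted to the natural $\cit^{*}$-action on the Givental-Hori-Vafa model, whose classes in $G$ span a free $\cit[\tau]$-submodule $H$ of rank $n$ stable under $\tau^{2}\partial_{\tau}$. A natural choice is suggested by the ambient generators $1,c_{1},c_{1}^{2},\ldots,c_{1}^{n-1}$ of $H^{*}(Q)$: one takes $\omega_{k}$ proportional to $f^{k}$ times the logarithmic volume form on the torus. The complement $H^{\circ}$ should then be generated by the differential forms built from the ``wild'' directions at infinity, which must first be shown to be $\cit[\tau]$-free and stable under the connection as well.

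The main obstacles are twofold. First, one has to prove that the decomposition $G=H\oplus H^{\circ}$ holds globally, i.e.\ that there are no mixing terms between $H$ and $H^{\circ}$ forced by the connection; this reduces to showing that the projection onto $H$ along $H^{\circ}$ commutes with $\partial_{\tau}$, and is likely the most delicate point. Second, once $H$ is identified as a $\cit[\tau]$-differential system, one has to match its connection matrix with the Dubrovin connection attached to the small quantum cohomology of the quadric in $\ppit^n$. For this one can compare both sides at the classical limit (where the quantum product degenerates to the cup product restricted to the ambient subring), then use the Picard-Fuchs equation satisfied by the oscillating integrals of $f$ to pin down the quantum parameter and conclude the isomorphism.
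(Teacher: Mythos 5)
Your overall strategy --- solve a Birkhoff problem by exhibiting an explicit rank-$n$ connection-stable submodule $H$ of $G$ and then split it off --- is the paper's, and your candidate sections ($f^{k}$ times the logarithmic volume form) agree modulo $\theta$ with the basis actually used, namely $([\omega_{0}],[u_{1}\omega_{0}],\dots,[u_{1}\cdots u_{n-2}\omega_{0}],2[u_{n-1}\omega_{0}])$. But your treatment of the complement is a genuine misdirection. You propose to generate $H^{\circ}$ explicitly by forms ``built from the wild directions at infinity,'' to prove that $H^{\circ}$ is itself stable under the connection, and you single out the absence of mixing terms (that the projection onto $H$ commutes with $\partial_{\tau}$) as the most delicate point. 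None of this is required by the statement, and none of it can be carried out with what is known: the rank of $G$ is only conjecturally equal to $n$ (in which case $H^{\circ}=0$), and the paper explicitly refrains from asserting that $H^{\circ}$ carries a connection. The splitting is obtained softly: once $H$ is known to be stable under $\nabla$ (proposition \ref{prop:Birkhoff}), the quotient $G/H$ inherits a connection, is of finite type, hence is free over $\cit[\theta,\theta^{-1},q,q^{-1}]$ by \cite[proposition 1.2.1]{S4}; the surjection $G\to G/H$ therefore admits a module-theoretic section $r$, and one sets $H^{\circ}:=r(G/H)$. Demanding a connection-stable complement is a strictly stronger assertion than the theorem and would leave you stuck.

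Conversely, the step you pass over in one clause is the one that actually costs effort: proving that the $n$ chosen sections are linearly independent over $\cit[\theta,q]$, i.e.\ that the lattice they generate has rank $n$ and not $n-1$. Since $f$ is not tame, the Brieskorn module is not of finite type and the usual ``rank $=\mu$'' argument is unavailable; only $n-1$ of the sections are independent modulo $\theta$ (the Jacobian ring has dimension $w-d=n-1$, and $2[u_{n-1}\omega_{0}]\equiv 2q[\omega_{0}]$ mod $\theta$ by (\ref{eq:RelAlgJacobi})). The paper excludes a relation by a homogeneity argument (with $\deg\theta=1$, $\deg q=n-1$) combined with the explicit action of $\theta\nabla_{q\partial_{q}}$ on the generators, reaching a contradiction. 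Finally, for the identification with the quantum differential system you invoke a classical-limit and Picard--Fuchs comparison; the paper simply matches the explicit connection matrix (\ref{eq:Birkhoff}) with the known matrix of $b\circ$ for the quadric in example \ref{ex:ExQuadPn} via $\varepsilon_{i}\leftrightarrow b_{i}$, which is immediate once the Birkhoff normal form has been computed.
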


\noindent In the previous situation, the rank of $G$ is equal to the global Milnor number of $f$ (the number of critical points  
of $f$ at finite distance, counted with multiplicities, that is $n-1$) plus a number of vanishing cycles at infinity: we expect that the latter is equal to one, in other words that the rank of $G$ is precisely equal to $n$ (and $H^{\circ} =0$), see conjecture \ref{conj:SingInftyHV} and corollary \ref{coro:RangG}. 
This is what happens for instance for $n=3$ et $n=4$, see example \ref{sec:Q2P3} 
(the case $n=4$ is also considered in \cite{GS}, using a different strategy).\\

These notes are organized as follows: 
in section 2 we discuss about the topology of regular functions and we study their Gauss-Manin systems in section 3. 
In section 4 we gather the results about hypersurfaces in weighted projective spaces that we need in order to define Givental-Hori-Vafa models in section 5.  
Their relationship with mirror symmetry is emphasized in section 6. As an application, we study the case of the quadrics in section 6.2.

 \section{Topology of regular functions}
\label{sec:CyclesEvInfini}

We collect in this section the general results about topology of regular functions that we will need. Our references are  \cite{D}, \cite{DS} and \cite{S}. The exposition is borrowed from the old preprint \cite{Do}.

\subsection{Isolated singularities including at infinity}
\label{sec:SingIsolInfty}
Let $U$ be an affine manifold of dimension $n\geq 2$, $S=\cit$ and $f:U\rightarrow S$ be a regular function. 
We will say that $f$  has {\em isolated singularities including at infinity} if there exists a  compactification
$$\overline{f}:X\rightarrow S$$
of $f$, $X$ is quasi-projective and $\overline{f}$ is proper, such that the support $\Sigma_{s}$ of 
$\varphi_{\overline{f}-s}Rj_{*}\cit_{U}$ 
is at most a finite number of points. Here, 
$j:U\rightarrow X$ denotes the inclusion and
$\varphi$ denotes the vanishing cycles functor \cite[Chapter 4]{D}.
If it happens to be the case, $f$ has at most isolated critical points on $U$ \cite[Theorem 6.3.17]{D}. If moreover $\Sigma_{s}\subset U$ for all $s\in S$, $f$ is said to be  
{\em cohomologically tame} \cite{S}.

Let us assume that $f$ has isolated singularities including at infinity. Since $^{p}\varphi :=\varphi [1]$ preserves peverse sheaves, 
${\cal E}_{s}:=\ ^{p}\varphi_{\overline{f}-s}Rj_{*}\cit_{U}[n]$ is a perverse sheaf with support in $\Sigma_{s}$ and thus
${\cal H}^{i}({\cal E}_{s})=0$ for $i\neq 0$,
because $\Sigma_{s}$ has ponctual support, see \cite[Example 5.2.23]{D}. For $x\in \Sigma_{s}$, the fibre $E_{x}:={\cal H}^{0}({\cal E}_{s})_{x}$ is a finite dimensional vector space:
\begin{itemize}
\item if $x\in U$ we have 
\begin{equation}\label{eq:MilnorMu}
\dim E_{x}=\mu_{x}\ \mbox{and}\ \sum_{x\in U}\dim E_{x}=\mu 
\end{equation}
$\mu_{x}$ denoting the Milnor number of $f$ at $x$ 
and $\mu $ the global  Milnor number of $f$, see \cite[proposition 6.2.19]{D}; we will also write $\mu_{s}=\sum_{x\in \Sigma_s \cap U}\dim E_{x}$,
\item if $x\in\Sigma_{s}\cap (X-U)$ we define 
\begin{equation}\label{eq:NotNu}
\nu_{x,s}:=\dim E_{x},\ \nu_{s}:=\sum_{x\in\Sigma_{s}\cap (X-U)} \nu_{x,s}\ \mbox{and}\ 
\nu :=\sum_{x\in X-U}\nu_{x,s}. 
\end{equation}
\end{itemize}
\noindent 
The point $x\in\Sigma_{s}\cap (X-U)$
is a {\em singular point of $f$ at infinity} if $\nu_{x,s}>0$.

Let $^{p}{\cal H}^{i}$ be the perverse cohomology functor: one has, see for instance \cite[Theorem 5.3.3]{D},
$$DR({\cal M}^{(i)})=\ ^{p}{\cal H}^{i}(Rf_{*}\cit_{U}[n])$$
where the ${\cal M}^{(i)}$'s are the cohomology groups of the direct image $f_{+}{\cal O}_U$ (the degrees are shifted $n=\dim U$).  
If $f$ has isolated singularities including at infinity, the perverse sheaves $^p {\cal H}^{i}(Rf_{*}\cit_{U}[n])$
are locally constant on $S$ for $i\neq 0$ because $\varphi_{t-s}(^p {\cal H}^{i}(Rf_{*}\cit_{U}[n]))=0$ if $i\neq 0$ for all $s\in\cit$
\cite[3.1.1]{DS} and \cite[Exercise 4.2.13]{D}. It follows that 
${\cal H}^{0}(^p {\cal H}^{i}(Rf_{*}\cit_{U}[n]))=0$ and that ${\cal H}^{-1}(^p {\cal H}^{i}(Rf_{*}\cit_{U}[n]))$
is a constant sheaf on $S$  for $i\neq 0$.
One has also, using the characterization of perverse sheaves in dimension $1$
\cite[Proposition 5.3.6]{D},
\begin{equation}\label{eq:SuiteExactePerv}
0\rightarrow {\cal H}^{0}(^p {\cal H}^{i}(Rf_{*}\cit_{U}))\rightarrow R^{i}f_{*}\cit_{U}\rightarrow {\cal H}^{-1}(^p {\cal H}^{i+1}(Rf_{*}\cit_{U}))\rightarrow 0
\end{equation}
and therefore
\begin{equation}\label{eqH-iP}
^p {\cal H}^{i}(Rf_{*}\cit_{U})=(R^{i-1}f_{*}\cit_{U})[1]
\end{equation}
for all $i<n$ because ${\cal H}^{0}(^p {\cal H}^{i}(Rf_{*}\cit_{U}))=0$ for $i<n$.
Notice also that
\begin{equation}\label{eq:H-1P}
^p {\cal H}^{n}(Rf_{*}\cit_{U})=(R^{n-1}f_{*}\cit_{U})[1]
\end{equation}
if $R^{n}f_{*}\cit_{U}=0$. Last, and by definition, 
$\dim_{\cit}\ ^{p}\varphi_{t_{s}}\ ^p {\cal H}^{n}(Rf_{*}\cit_{U})=\mu_s +\nu_s $.

\begin{proposition}\cite[Proposition 6.2.19]{D}, \cite{DS}
\label{prop:RangM}
 Let $f:U\rightarrow S$ be a regular function, with isolated singularities 
including at infinity. 
\begin{enumerate}
\item One has
\begin{equation}\label{eq:rangRn-1}
m=\mu +\nu +h^{n-1}(U)-h^{n}(U)
\end{equation}
where $m$ is the rank of $R^{n-1}f_{*}\cit_{U|V}$, $V=S-\Delta$ denoting the maximal open set in $S$ on which the constructible sheaf $R^{n-1}f_{*}\cit_{U}$ is a local system.
\item One has
\begin{equation}\label{eq:FibreSpeFibreGen}
\chi (f^{-1}(s'))-\chi(f^{-1}(s))=(-1)^{n-1}(\mu_{s}+\nu_{s})
\end{equation}
for all $s, s'\in S$ such that $s'\notin\Delta$. 
\end{enumerate}
\end{proposition}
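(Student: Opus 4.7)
The plan is to establish (\ref{eq:FibreSpeFibreGen}) first via a direct vanishing-cycles computation, then to deduce (\ref{eq:rangRn-1}) by computing $\chi(U)$ in two independent ways and equating the results.

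For (\ref{eq:FibreSpeFibreGen}), I would apply the standard distinguished triangle relating the restriction functor $i_s^*$, the nearby-cycles functor $\psi_{\overline{f}-s}$ and the vanishing-cycles functor $\varphi_{\overline{f}-s}$ to the constructible sheaf $Rj_*\cit_U$ on the proper compactification $X$, then take Euler characteristics of $R\Gamma(X_s,-)$. Since $\overline{f}$ is proper, the nearby-cycles term computes $\chi(f^{-1}(s'))$ for $s'$ generic, while the restriction term computes $\chi(f^{-1}(s))$: at points $x\in U\cap X_s$ the sheaf $Rj_*\cit_U$ is the constant sheaf, and at boundary points $x\in X_s\setminus U$ the contributions are exactly those encoded by $\Sigma_s$. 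The vanishing-cycles term is concentrated on the isolated set $\Sigma_s$, because $\mathcal{E}_s={}^p\varphi_{\overline{f}-s}Rj_*\cit_U[n]$ is a skyscraper perverse sheaf with total stalk dimension $\mu_s+\nu_s$ by (\ref{eq:MilnorMu}) and (\ref{eq:NotNu}). Unshifting the $[n]$ produces the sign $(-1)^{n-1}$, whence $\chi(f^{-1}(s'))-\chi(f^{-1}(s))=(-1)^{n-1}(\mu_s+\nu_s)$.

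For (\ref{eq:rangRn-1}), I would first compute $\chi(U)$ from the decomposition $U=f^{-1}(V)\sqcup\bigsqcup_{s\in\Delta}f^{-1}(s)$. Because $f:f^{-1}(V)\to V$ is a locally trivial fibration with fibre $F$, multiplicativity of Euler characteristics gives $\chi(f^{-1}(V))=(1-|\Delta|)\chi(F)$; combining with (\ref{eq:FibreSpeFibreGen}) summed over $s\in\Delta$ yields
\begin{equation*}
\chi(U)=\chi(F)+(-1)^n(\mu+\nu).
\end{equation*}
A second computation uses Leray. By the perverse-cohomology argument already leading to (\ref{eqH-iP}), $R^if_*\cit_U$ is constant on the contractible base $S=\cit$ for $i\leq n-2$; and $R^nf_*\cit_U=0$ since the fibres are Stein of complex dimension $n-1$. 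The Leray spectral sequence then yields $h^i(U)=h^i(F)$ for $i\leq n-2$, and with $h^{n-1}(F)=m$ by definition,
\begin{equation*}
\chi(U)-\chi(F)=(-1)^{n-1}\bigl(h^{n-1}(U)-m\bigr)+(-1)^n h^n(U)=(-1)^n\bigl(m-h^{n-1}(U)+h^n(U)\bigr).
\end{equation*}
Equating the two expressions for $\chi(U)-\chi(F)$ produces (\ref{eq:rangRn-1}).

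The main difficulty is the vanishing-cycles step: one must verify that each boundary point $x\in\Sigma_s\cap(X\setminus U)$ contributes exactly $\nu_{x,s}$ to the Euler-characteristic jump, and not some spurious cohomology of the compactification. This is where the Dimca--Saito framework for regular functions with isolated singularities including at infinity is crucial, guaranteeing that the vanishing-cycles functor applied to $Rj_*\cit_U$ sees only the isolated singular geometry, both at finite distance and at infinity.
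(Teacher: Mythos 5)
The paper offers no proof of this proposition: it is quoted directly from \cite[Proposition 6.2.19]{D} and \cite{DS} and closed with a \qed, so there is no internal argument to compare yours against. Your reconstruction is essentially the standard argument from those references, and it is correct in outline: you obtain (\ref{eq:FibreSpeFibreGen}) from the triangle $i_{s}^{*}\rightarrow\psi_{\overline{f}-s}\rightarrow\varphi_{\overline{f}-s}$ applied to $Rj_{*}\cit_{U}$, using that $\varphi_{\overline{f}-s}Rj_{*}\cit_{U}$ has finite support with total stalk Euler characteristic $(-1)^{n-1}(\mu_{s}+\nu_{s})$; you then get (\ref{eq:rangRn-1}) by computing $\chi(U)$ once by additivity over the fibres and once by Leray, using the constancy of $R^{i}f_{*}\cit_{U}$ for $i\leq n-2$ (which the paper itself establishes in section \ref{sec:SingIsolInfty}) together with $R^{n}f_{*}\cit_{U}=0$; equating the two expressions gives the rank formula. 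The one step whose justification is imprecise as written is the identification of the restriction term with $\chi(f^{-1}(s))$: the complex $Rj_{*}\cit_{U}$ has nontrivial stalks all along $X_{s}\setminus U$, not only at $\Sigma_{s}$, so the pointwise additivity over $X_{s}=(X_{s}\cap U)\sqcup(X_{s}\setminus U)$ that your sentence suggests leaves a boundary contribution you cannot simply discard. The clean route is proper base change, $R\Gamma(X_{s},i_{s}^{*}Rj_{*}\cit_{U})\simeq(Rf_{*}\cit_{U})_{s}\simeq R\Gamma(f^{-1}(D_{s}),\cit)$ for a small disc $D_{s}$ centred at $s$, combined with the standard retraction of $f^{-1}(D_{s})$ onto $f^{-1}(s)$; the finiteness of the support $\Sigma_{s}$ is needed only for the vanishing-cycle term. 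With that repair, and with the observation that points of $V$ satisfy $\mu_{s}+\nu_{s}=0$ (so that all fibres over $V$ have the same Euler characteristic, justifying your multiplicativity step over $V$), your argument goes through.
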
\qed

\noindent We will use the previous proposition it in order to compute the rank of the Fourier transform of the Gauss-Manin system of 
 some regular functions, see theorem \ref{theo:rangMG}.
 
\begin{remark}\label{rem:nuind}
 Formula (\ref{eq:FibreSpeFibreGen}) shows that the number of vanishing cycles at infinity $\nu$ defined by (\ref{eq:NotNu}) is precisely the one 
 defined  by Siersma and Tibar and denoted by $\lambda$ in
\cite[corollary 4.10]{ST}, \cite[paragraphe 3]{T}. It also shows that
the numbers $\nu_{s}$ do not depend on the choosen compactification of $f$. 
\end{remark}

There exists a finite set  
$B \subset \cit$ such that
$f:U- f^{-1}(B)\rightarrow \cit -B$
is a locally trivial fibration.
The smallest such set, denoted by $B(f)$, is called {\em the bifurcation set} of $f$  and its points are called the {\em atypical values}. 
A value which is not atypical is  {\em typical}.
In general  $B(f)=C(f)\cup B_{\infty}(f)$ where $C(f)$ is 
the set of critical values of $f$ and $B_{\infty}(f)$ is a contribution from singular points at infinity. 
One can be more precise if $f$ has isolated singularities including at infinity:

\begin{proposition}\cite[Theorem 4.12]{ST} Let $f$ be a (Laurent) polynomial 
 with isolated singularities including at infinity.
Then $a$ is typical if and only if $\nu_a =\mu_a =0$.
\end{proposition}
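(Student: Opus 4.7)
The plan is to prove the two implications separately, using proposition \ref{prop:RangM} for the easy direction and a Thom isotopy argument for the harder one.

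\emph{Typical implies $\mu_a = \nu_a = 0$.} This direction will follow immediately from the Euler characteristic formula (\ref{eq:FibreSpeFibreGen}). If $a$ is typical, then by definition there is an open disk $D$ around $a$ such that $f : f^{-1}(D) \rightarrow D$ is a locally trivial fibration, so any two fibers over $D$ have the same Euler characteristic. Picking $s' \in D \setminus \Delta$ close to $a$, formula (\ref{eq:FibreSpeFibreGen}) gives $(-1)^{n-1}(\mu_a + \nu_a) = \chi(f^{-1}(s')) - \chi(f^{-1}(a)) = 0$, and since $\mu_a, \nu_a \geq 0$ both must vanish.

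\emph{$\mu_a = \nu_a = 0$ implies $a$ is typical.} This is the substantive direction. I would first observe that the vanishing of $\mu_a$ and $\nu_a$ together with definitions (\ref{eq:MilnorMu}) and (\ref{eq:NotNu}) forces $\Sigma_a = \emptyset$, i.e.\ $^{p}\varphi_{\overline{f}-a} Rj_{*}\cit_{U}[n] = 0$. Next, I would choose a Whitney stratification $\mathcal{S}$ of $X$ compatible with the divisor $X - U$ and with the fibers of $\overline{f}$ near $\overline{f}^{-1}(a)$, such that $Rj_{*}\cit_{U}$ is $\mathcal{S}$-constructible. The vanishing of $\Sigma_a$ should then force two things: $f$ has no critical points above $a$ (from $\mu_a = 0$ and the isolated-singularity hypothesis), and for every stratum $T \subset X - U$ meeting $\overline{f}^{-1}(a)$, the restriction $\overline{f}|_T$ is a submersion near $\overline{f}^{-1}(a) \cap T$, since otherwise vanishing cycles would accumulate along $T$ and contribute to $\nu_a$.

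With these ingredients in hand, I would apply Thom's first isotopy lemma to the proper map $\overline{f}$ and the stratification $\mathcal{S}$ to produce a stratum-preserving local trivialization of $\overline{f}$ over a small disk around $a$, which restricts to a local trivialization of $f : f^{-1}(D) \rightarrow D$; that is exactly the statement that $a$ is typical. The hard part will be the middle step: translating the purely sheaf-theoretic vanishing $\nu_a = 0$ into the geometric Thom $a_{\overline{f}}$-regularity condition needed at strata in $X - U$. This translation is exactly the content of \cite[Theorem 4.12]{ST}, and is consistent with the identification of $\nu$ with the Siersma--Tibar invariant $\lambda$ recorded in remark \ref{rem:nuind}.
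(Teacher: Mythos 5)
First, note that the paper itself offers no proof of this proposition: it is quoted verbatim from \cite[Theorem 4.12]{ST}, so there is nothing internal to compare your argument against. Your forward implication is correct and complete: if $a$ is typical there is a disk $D\ni a$ over which $f$ is a locally trivial fibration, one picks $s'\in D\setminus\Delta$ (possible since $\Delta$ is finite), and formula (\ref{eq:FibreSpeFibreGen}) together with $\mu_a,\nu_a\geq 0$ forces $\mu_a=\nu_a=0$. This half is a genuine, self-contained deduction from proposition \ref{prop:RangM}.

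The converse, however, contains a real gap, and you have in effect flagged it yourself: the step that converts the cohomological vanishing $^{p}\varphi_{\overline{f}-a}Rj_{*}\cit_{U}[n]=0$ into the geometric input needed for Thom's isotopy lemma is, as you say, ``exactly the content of \cite[Theorem 4.12]{ST}'' --- that is, the statement being proved. Beyond the circularity, the proposed mechanism does not work as stated. The sheaf whose vanishing cycles compute $\nu_a$ is $Rj_{*}\cit_{U}$, not $\cit_{X}$, so the failure of $\overline{f}|_{T}$ to be a submersion along a stratum $T\subset X-U$ need not produce a nonzero stalk of $\varphi_{\overline{f}-a}Rj_{*}\cit_{U}$ there; conversely, the very definition of ``isolated singularities including at infinity'' allows Thom $a_{\overline{f}}$-regularity to fail at finitely many points of $X-U$, so one cannot hope to recover a stratification to which the first isotopy lemma applies over a full neighbourhood of $a$ directly from $\nu_a=0$. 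The actual argument in \cite{ST} is local: at each point of the axis $A$ lying over $a$ one has a local Milnor--L\^e type fibration of the meromorphic germ, the isolated-singularity hypothesis makes its variation of topology finite-dimensional and measured exactly by $\mu_{x}$ and $\nu_{x,a}$, and the vanishing of these numbers yields local topological triviality there; the isotopy lemma is only invoked away from these finitely many special points. If you want a self-contained proof, that local analysis is the missing ingredient; otherwise the honest course is to do what the paper does and cite \cite[Theorem 4.12]{ST} for this implication, keeping your Euler-characteristic argument for the other one.
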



\subsection{Vanishing cycles at infinity with respect to the projective compactification by the graph}
\label{sec:CompStandard}

We apply the previous definitions to Laurent polynomials, using the standard compactification by the graph. 
We follow here \cite{ST} and \cite{T}.
Let $Y=\ppit^{n}$ and 
\begin{equation} \nonumber
F:Y\dashrightarrow\ppit^{1}
\end{equation}
be the rational function defined by $F(x)=(P(x):Q(x))$ where $P$ and $Q$ are two homogeneous polynomials of same degree.
Let
\begin{equation}\nonumber 
G=\{(x,t)\in (Y-A)\times \ppit^{1}\ | \ F(x)=t\}
\end{equation}
where $A=\{x\in Y | \ P(x)=Q(x)=0\}$ and
\begin{equation} \label{def:grapheY}
\yit =\{(x,(s:r))\in Y\times \ppit^{1}\ | \ rP(x)=sQ(x)\}
\end{equation}
be the closure of $G$ in $Y\times \ppit^{1}$. The singular locus $\yit_{sing}$ of $\yit$ is contained in $A$. By definition, $G$ is the graph of $F$ restricted to $Y-A$ and thus $G\simeq Y-A$: the inclusion 
$Y-A \hookrightarrow \yit$ defines the compactification 
\begin{align}\nonumber
\begin{array}{ccc}
Y-A & \hookrightarrow & \yit \\
    & \searrow        &\downarrow \pi \\
    &                   & \ppit^{1}
\end{array}
\end{align}
of $F$, $\pi$ denoting the projection on the second factor. 
With the notations 
of section \ref{sec:SingIsolInfty}, $X=\yit$ and $\pi =\overline{f}$.

Assume now that the hypersurface $\yit_{a}:=\pi^{-1}(a)$ has an {\em isolated} singularity at $(p,a)\in A\times \{a\}$
and denote by $\mu_{p,a}$ the corresponding Milnor number.
If $\yit_{sing}$ is a curve at $(p,a)$, it intersects  $\yit_{s}$,  
$s$ close to $a$, at points $p_i (s)$, $1\leq i\leq k$.
Let $\mu_{p_i (s), s}$ be the Milnor number of $\yit_{s}$ at $p_i (s)$.

\begin{proposition}\label{prop:nu}
Assume that $\yit_{a}$ has an isolated singularity at $(p,a)\in A\times \{a\}$. Then
$$\nu_{p,a}= \mu_{p,a}-\sum_{i=1}^{k}\mu_{p_i (s), s}.$$ 
\end{proposition}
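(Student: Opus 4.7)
The plan is to prove Proposition~\ref{prop:nu} by a local Euler characteristic computation at $(p,a)$ in the spirit of Siersma--Tibar \cite{ST,T}. The idea is to evaluate $\chi(\yit_s \cap B) - \chi(\yit_a \cap B)$ on a small ball $B$ around $(p,a)$ in two different ways: once by isolating the vanishing-cycle contribution at infinity $\nu_{p,a}$ through formula (\ref{eq:FibreSpeFibreGen}), and once by applying the Milnor formula to each isolated hypersurface singularity of the fibers.

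First I would localize, choosing a product neighborhood $B = B(p,\epsilon) \times D(a,\delta) \subset Y \times \ppit^{1}$ small enough that the only singularities of $\yit$ inside $B$ lie on the germ of the curve $\yit_{sing}$ through $(p,a)$, meeting $\yit_a$ only at $(p,a)$ and meeting $\yit_s$ only at the points $p_i(s)$ for $s \in D(a,\delta) - \{a\}$. Since all these singular points lie in $A \times \ppit^{1}$, the restriction $f = F|_{Y-A}$ has no critical points on $B \cap U$, and a local version of (\ref{eq:FibreSpeFibreGen}) isolates the single at-infinity contribution at $(p,a)$:
\[
\chi\bigl(f^{-1}(s) \cap B(p,\epsilon)\bigr) - \chi\bigl(f^{-1}(a) \cap B(p,\epsilon)\bigr) = (-1)^{n-1}\nu_{p,a}.
\]

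Next I would relate this to the fibers of $\pi$. Because $A \subset \yit_t$ for every $t \in \ppit^{1}$, one has the disjoint decomposition $\yit_t \cap B(p,\epsilon) = \bigl(F^{-1}(t) \cap B(p,\epsilon)\bigr) \sqcup \bigl(A \cap B(p,\epsilon)\bigr)$, whose second summand is independent of $t$. Hence
\[
\chi(\yit_s \cap B) - \chi(\yit_a \cap B) = \chi(f^{-1}(s) \cap B) - \chi(f^{-1}(a) \cap B) = (-1)^{n-1}\nu_{p,a}.
\]
On the other hand, comparing $\yit_a \cap B$ (one isolated singularity, Milnor number $\mu_{p,a}$) and $\yit_s \cap B$ (isolated singularities at the $p_i(s)$, Milnor numbers $\mu_{p_i(s),s}$) to a common smoothing inside $B$, realized by a generic nearby smooth hypersurface, and applying the Milnor formula $\chi(\text{smoothing}) - \chi(\text{singular germ}) = (-1)^{n-1}\mu$ at each singular point, I obtain
\[
\chi(\yit_s \cap B) - \chi(\yit_a \cap B) = (-1)^{n-1}\Bigl(\mu_{p,a} - \sum_{i=1}^{k}\mu_{p_i(s),s}\Bigr).
\]
Equating the two expressions for $\chi(\yit_s \cap B) - \chi(\yit_a \cap B)$ and cancelling $(-1)^{n-1}$ gives the claim.

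The main obstacle will be the rigorous localization of (\ref{eq:FibreSpeFibreGen}) in the first step: that global identity must be truncated to the ball $B$, and I have to verify that exactly the vanishing cycle at infinity at $(p,a)$ contributes to the truncation, i.e., that no finite critical point of $f$ and no other point at infinity lies in $B \cap U$ for small enough $\epsilon,\delta$. Both facts rest on the hypothesis that $\yit_{sing}$ is a curve germ at $(p,a)$ and on the inclusion $\yit_{sing} \subset A \times \ppit^{1}$, which forces every singular point of the fibers of $\pi$ in $B$ to belong to the base locus $A$. Once this local formula is secured, the remaining Milnor-number bookkeeping is standard.
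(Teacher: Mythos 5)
Your argument is correct and is essentially the paper's own proof with the citation unpacked: the paper disposes of this proposition in one line by combining remark \ref{rem:nuind} (which identifies $\nu_{p,a}$ with the local Euler-characteristic jump, i.e.\ the invariant $\lambda$ of Siersma--Tibar) with \cite[Theorem 5.1]{ST}, whose content is exactly your local comparison of $\chi(\yit_s\cap B)$ and $\chi(\yit_a\cap B)$ via Milnor numbers. The localization issues you flag (no critical points of $f$ and no other singularities of the fibers of $\pi$ in a small enough product neighborhood, and the vanishing of the contribution over the punctured disc) are precisely the points handled in \cite{ST}, so nothing essential is missing.
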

\begin{proof}
Follows from remark \ref{rem:nuind} (1) and \cite[Theorem 5.1]{ST}. 
\end{proof}

\noindent This proposition is very explicit when $\yit_{sing}$ is a line $\{p\}\times \cit$:     
indeed, let $\mu_{p, gen}$
be the Milnor number of the hypersurface $\yit_{s}$ at $p$ for generic $s$.

\begin{corollary}\label{coro:nu}
Assume that $\yit_{sing}=\{p\}\times \cit$. Then  
$\nu_{p,a}= \mu_{p,a}-\mu_{p, gen}$.\qed
\end{corollary}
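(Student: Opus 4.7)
The plan is to deduce this corollary as a direct specialization of Proposition \ref{prop:nu}. Under the hypothesis $\yit_{sing} = \{p\} \times \cit$, I first observe that for each $s$ in a small punctured neighborhood of $a$, the intersection $\yit_{sing} \cap \yit_s$ consists of the single point $(p,s)$. Hence in the notation of Proposition \ref{prop:nu}, we have $k=1$ and $p_1(s) = p$ for every such $s$, so the formula collapses to
\begin{equation}\nonumber
\nu_{p,a} = \mu_{p,a} - \mu_{(p,s),s}.
\end{equation}

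The second step is to identify $\mu_{(p,s),s}$ with $\mu_{p,gen}$. Since $s$ ranges in a punctured neighborhood of $a$ and $\yit_s$ has $(p,s)$ as its only singular point along the singular line, the Milnor number $\mu_{(p,s),s}$ is constant in $s$ by upper-semicontinuity combined with the fact that it cannot jump on a dense subset of the line; this constant value is precisely what is denoted $\mu_{p,gen}$ in the statement. Substituting gives $\nu_{p,a} = \mu_{p,a} - \mu_{p,gen}$, as required.

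The argument is essentially immediate once Proposition \ref{prop:nu} is in hand; the only nontrivial checkpoint is verifying that the Siersma--Tibar hypotheses apply in this specialized setting, i.e.\ that $\yit_a$ indeed has an isolated singularity at $(p,a)$ so that Proposition \ref{prop:nu} is available. This is compatible with the assumption $\yit_{sing} = \{p\} \times \cit$ in the sense that the singular locus meets $\yit_a$ in the isolated point $(p,a)$, so no additional work is required on this side.
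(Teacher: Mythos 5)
Your proposal is correct and follows exactly the route the paper intends (the paper gives no separate argument, treating the corollary as immediate from Proposition \ref{prop:nu}): with $\yit_{sing}=\{p\}\times\cit$ one has $k=1$ and $p_1(s)=p$, and the Milnor number $\mu_{(p,s),s}$ equals the generic value $\mu_{p,gen}$ for $s$ in a small punctured neighbourhood of $a$ since the jumping locus of the upper-semicontinuous function $s\mapsto\mu_{(p,s),s}$ is a finite subset of the line. Your closing identification of these nearby Milnor numbers with $\mu_{p,gen}$ is the one genuinely needed observation, and it is handled adequately.
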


\begin{remark}
\label{rem:CompStandardLaurent}
We will apply the previous construction to Laurent polynomials 
$$f(x_1 ,\cdots ,x_n)=\frac{P(x_1 ,\cdots ,x_n)}{Q(x_1 ,\cdots ,x_n )}$$
 where
$P$ and $Q$ have no common factors, $Q$ is monomial and $\deg P\geq \deg Q$. 
The homogeneization of $f$ is
$$\frac{P(X_0 , X_1 ,\cdots , X_n )}{Q(X_0 , X_1 ,\cdots ,X_n )}:=\frac{X_0 ^{\deg P}P(X_1 /X_0 ,\cdots ,X_n /X_0 )}{X_0 ^{\deg P}Q(X_1 /X_0 ,\cdots ,X_n /X_0 )}$$
and we will write
$$F(X_0 ,X_1 ,\cdots , X_n ,t):= P(X_0 , X_1 ,\cdots , X_n )-tQ(X_0 , X_1 ,\cdots ,X_n )$$
for $t\in\cit$. 
\end{remark}

\begin{remark}
\label{rem:NonCrit}
By \cite[Theorem 1.3]{Paru}, a polynomial function $f$ is cohomologically tame for the standard projective compactification
by the graph if and only if $f$ satisfies Malgrange's condition
$$\exists \delta >0,\  |x||\partial f(x)|\geq\delta\ \mbox{for}\ |x|\ \mbox{large enough},$$
$\partial f(x)$ denoting the gradient of $f$ at $x$.
Let now $f$ be a  Laurent polynomial and define 
\begin{equation}
T_{\infty}(f)=\{c\in \cit |\ \exists\  (p_{n}),\
p_{n}\rightarrow p \in \ppit^n -U,\ p_{n}\grad f(p_{n})\rightarrow 0,\ \ f(p_{n})\rightarrow c\}
\end{equation}
\noindent With the notations of section \ref{sec:CyclesEvInfini}, one may expect that $B_{\infty}(f)=T_{\infty}(f)$, see \cite[1.3]{T1} . 
\end{remark}

\section{Applications to Gauss-Manin systems and their Fourier transform}

\label{sec:F(GM)}

We study here the Gauss-Manin systems of regular functions.
As before,
let $U$ be an affine manifold of dimension $n\geq 2$, $S=\cit$ and $f:U\rightarrow S$ be a regular function. 

\subsection{Gauss-Manin systems of regular functions}
\label{sec:GaussManin}
Let $\Omega^{p}(U)$ be the space of regular $p$-forms on $U$. 
The Gauss-Manin complex of $f$ is
$(\Omega^{\bullet +n}(U)[\partial_{t}],d_{f})$
where $d_{f}$ is defined by
$$d_{f}(\sum_{i}\omega_{i}\partial_{t}^{i})=\sum_{i}d\omega_{i}\partial_{t}^{i}-\sum_{i}df\wedge \omega_{i}\partial_{t}^{i+1}$$
The Gauss-Manin systems of $f$ are the cohomology groups $M^{(i)}$ of this  complex. These are holonomic regular $\cit [t]<\partial_{t}>$-modules, see [Bo, p. 308], the action of $t$ and $\partial_{t}$ coming from the one on $\Omega^{\bullet +n}(U)[\partial_{t}]$ defined by
$$t(\sum_{i}\omega_{i}\partial_{t}^{i})=\sum_{i}f\omega_{i}\partial_{t}^{i}-\sum_{i}i\omega_{i}\partial_{t}^{i-1}$$
and
$$\partial_{t}(\sum_{i}\omega_{i}\partial_{t}^{i})=\sum_{i}\omega_{i}\partial_{t}^{i+1}$$
The following lemma is well-known \cite{DS}, \cite{DoSa1}, \cite{S}:
\begin{lemma}\label{lemma:Milibres}
Assume that $f$ has isolated singularities including at infinity.
Then the modules $M^{(i)}$ are $\cit [t]$ free of rank $h^{n-1+i}(U)$ for $i<0$.
\end{lemma}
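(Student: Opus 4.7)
The plan is to combine the Riemann--Hilbert correspondence with the perverse cohomology identifications already established in the preceding paragraphs, plus a Leray spectral sequence collapse.

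\emph{First step: identify the de Rham complex.} By the relation $DR({\cal M}^{(i)})={}^{p}{\cal H}^{i}(Rf_{*}\cit_{U}[n])$ recalled just before (\ref{eq:SuiteExactePerv}), and by (\ref{eqH-iP}) read with the shift by $n$, one has for $i<0$
\begin{equation}\nonumber
DR({\cal M}^{(i)})=(R^{n-1+i}f_{*}\cit_{U})[1].
\end{equation}
The hypothesis that $f$ has isolated singularities including at infinity gives $\varphi_{t-s}({}^{p}{\cal H}^{i}(Rf_{*}\cit_{U}[n]))=0$ for every $s\in S$ and every $i\neq 0$ (stated in the lines preceding Proposition \ref{prop:RangM}). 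So the perverse sheaf $DR({\cal M}^{(i)})$ has no vanishing cycles on $S$: the constructible sheaf $R^{n-1+i}f_{*}\cit_{U}$ is a local system on all of $S=\cit$, and being on a simply connected base, it is the constant sheaf with fiber $H^{n-1+i}(f^{-1}(t),\cit)$.

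\emph{Second step: deduce freeness.} By algebraic Riemann--Hilbert, the absence of singular points in $\cit$ of the de Rham complex of ${\cal M}^{(i)}$ means that ${\cal M}^{(i)}$ is a regular holonomic $D$-module on $\ait^{1}$ which is ${\cal O}_{\ait^{1}}$-coherent and locally free. Since $\ait^{1}=\mathrm{Spec}\,\cit[t]$ is affine and $\cit[t]$ is a principal ideal domain, any finitely generated locally free (equivalently, projective) $\cit[t]$-module is free. Hence $M^{(i)}=\Gamma(\ait^{1},{\cal M}^{(i)})$ is a free $\cit[t]$-module, of rank equal to the generic rank of the underlying local system, namely $h^{n-1+i}(f^{-1}(t),\cit)$.

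\emph{Third step: identify the rank with $h^{n-1+i}(U)$.} Apply the same reasoning of step one to every degree $q\leq n-2$ (all of which come from some $i<0$): each $R^{q}f_{*}\cit_{U}$ is a constant sheaf on $S$. Consider now the Leray spectral sequence
\begin{equation}\nonumber
E_{2}^{p,q}=H^{p}(S,R^{q}f_{*}\cit_{U})\ \Rightarrow\ H^{p+q}(U,\cit).
\end{equation}
Since $S=\cit$ is contractible, $H^{p}(S,\cdot)=0$ on constant sheaves for $p\geq 1$. Thus for every $q\leq n-2$, $E_{2}^{p,q}=0$ for $p\geq 1$, and there are no differentials hitting or leaving $E_{r}^{0,q}$. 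The spectral sequence collapses in this range and gives
\begin{equation}\nonumber
H^{q}(U,\cit)=H^{0}(S,R^{q}f_{*}\cit_{U})=H^{q}(f^{-1}(t),\cit),\qquad q\leq n-2.
\end{equation}
Setting $q=n-1+i$ (which is $\leq n-2$ since $i<0$) yields the rank $h^{n-1+i}(U)$ claimed in the lemma.

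\emph{Main obstacle.} The only technical point is the jump from ``no finite singularities'' to ``$\cit[t]$-free of finite rank''; this is an algebraic Riemann--Hilbert statement, but it is robust because $\cit[t]$ is a PID. The Leray step is essentially formal once one observes that all the lower direct images are already known to be constant by the same vanishing-cycle argument used in step one.
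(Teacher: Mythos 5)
Your proof is correct and follows essentially the same route as the paper's: identify $DR({\cal M}^{(i)})$ with the shifted direct image sheaf via (\ref{eqH-iP}), and use the vanishing-cycle hypothesis to conclude that this sheaf is constant on $S$, hence that ${\cal M}^{(i)}$ is a free connection. The paper's proof is a one-line citation of these facts; your Riemann--Hilbert/PID step and the Leray collapse identifying $h^{n-1+i}(f^{-1}(t))$ with $h^{n-1+i}(U)$ are exactly the details it leaves implicit.
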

\begin{proof} Follows from 
equation (\ref{eqH-iP}) and the fact that ${\cal H}^{-1}(^p {\cal H}^{i}(Rf_{*}\cit_{U}[n]))$
is a constant sheaf on $S$  for $i\neq n$ if $f$ has isolated singularities including at infinity, see section \ref{sec:SingIsolInfty}.
\end{proof} 

We will put $M:=M^{(0)}$ and we will call it  {\em the Gauss-Manin system} of $f$. Let $\widehat{M}$ be its Fourier transform: this is $M$ seen as a  $\cit[\tau] <\partial_{\tau}>$-module where $\tau$ acts as
$\partial_{t}$ and $\partial_{\tau}$ acts as $-t$:
$$\widehat{M}=\frac{\Omega^{n}(U)[\tau ]}{d_{f}(\Omega^{n-1}(U)[\tau])}$$
where $d_{f}(\sum_{i}\omega_{i}\tau^{i})=\sum_{i}d\omega_{i}\tau^{i}-\sum_{i}df\wedge \omega_{i}\tau^{i+1}$. 
Let
$$G:= \widehat{M}[\tau^{-1}]=\frac{\Omega^{n}(U)[\tau ,\tau^{-1} ]}{d_{f}(\Omega^{n-1}(U)[\tau ,\tau^{-1}])}$$
be the localized module.
Since $M$ is a regular holonomic $\cit [t]<\partial_{t}>$-module, $G$ is a free $\cit [\tau ,\tau^{-1}]$-module of finite rank equipped with a connection 
whose singularities are $0$ and $\infty$ only, the former being regular and the latter of Poincar\'e rank less or equal to $1$, see \cite[V, prop. 2.2]{S1}.

\begin{remark} \label{rem:FormelleSemiSimple}The Fourier-Laplace transform
$G$ has no ramification because $M$ is regular, see for instance \cite[V. 3. b.]{S1}. In particular $G$ has only integral slopes, the slopes $0$ and $1$. If $H$ is a lattice in $G$, i.e a free $\cit [\theta ]$-module of maximal rank, stable under $\theta^2 \partial_{\theta}$, 
the eigenvalues of the constant matrix in the  
expression of $\theta^2 \partial_{\theta}$ in a basis of $H$ are precisely the singular points of the Gauss-Manin system $M$, see \cite[V. 3]{S1}.
\end{remark}

The  {\em Brieskorn module} $G_0$ of $f$ 
is by definition the image in $G$ of the sections that do not depend on $\tau$. 
Putting $\theta :=\tau^{-1}$, we have
$$G_{0}:=\frac{\Omega^{n}(U)[\theta]}{d_{f}(\Omega^{n-1}(U)[\theta ,\theta^{-1}])\cap \Omega^{n}(U)[\theta ]}$$
where
$$d_{f}(\sum_{i}\omega_{i}\theta^{i})= \sum_{i}[d\omega_{i}\theta^{i+1}-df\wedge \omega_{i}\theta^{i}].$$ 
Recall the global Milnor number $\mu= \dim_{\cit}\frac{\Omega^{n}(U)}{df\wedge \Omega^{n-1}(U)}$.

\begin{proposition}\label{prop:G0SingIsolees}
Assume that $f$ has only isolated critical points on $U$. Then
\begin{enumerate}
\item $G_{0}=\frac{\Omega^{n}(U)[\theta ]}{(\theta d-df\wedge )\Omega^{n-1}(U)[\theta ]}$,
\item the $\cit [\theta]$-module $G_{0}$ has no torsion.
\end{enumerate}
\end{proposition}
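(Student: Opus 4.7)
The plan is to base both parts on a single analytic input: when $f$ has only isolated critical points on $U$, the Koszul--de Rham complex $(\Omega^{\bullet}(U), df\wedge )$ is exact in every degree strictly less than $n$. Near each isolated critical point, $(\partial_{1}f,\dots ,\partial_{n}f)$ is a regular sequence, so the Koszul complex is locally exact in those degrees; on the complement of the (finite) critical locus, $df$ is nowhere zero, again forcing exactness. Since $U$ is affine and the cohomology sheaves of the complex are supported on the finite critical set, vanishing of higher coherent cohomology on $U$ promotes these local statements to a global one.

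For (1), the inclusion $(\theta d-df\wedge )\Omega^{n-1}(U)[\theta ]\subseteq d_{f}(\Omega^{n-1}(U)[\theta ,\theta^{-1}])\cap \Omega^{n}(U)[\theta ]$ is immediate. For the converse, assume $\omega (\theta )\in \Omega^{n}(U)[\theta ]$ has the form $(\theta d-df\wedge )\eta$ with $\eta =\sum_{i\geq -N}\eta_{i}\theta^{i}\in \Omega^{n-1}(U)[\theta ,\theta^{-1}]$, and induct on $N\geq 0$. If $N\geq 1$, the coefficient of $\theta^{-N}$ in $\omega$ vanishes while on the right it equals $-df\wedge \eta_{-N}$, so $df\wedge \eta_{-N}=0$ and, by the analytic input, $\eta_{-N}=df\wedge \alpha$ for some $\alpha \in \Omega^{n-2}(U)$. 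A direct check using $d^{2}=0$ and the anti-commutation $d\circ (df\wedge )=-(df\wedge )\circ d$ yields $(\theta d-df\wedge )^{2}=0$; consequently $\tilde\eta := \eta +(\theta d-df\wedge )(\alpha \theta^{-N}) = \eta + d\alpha\,\theta^{-N+1} - \eta_{-N}\theta^{-N}$ still satisfies $(\theta d-df\wedge )\tilde\eta = \omega$ and lies in $\sum_{i\geq -N+1}\Omega^{n-1}(U)\theta^{i}$, so $N$ has dropped by one. Iterating brings us to $\eta \in \Omega^{n-1}(U)[\theta ]$.

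For (2), the discussion preceding the proposition already establishes that $G$ is free of finite rank over $\cit[\tau ,\tau^{-1}] = \cit[\theta ,\theta^{-1}]$ (thanks to the regular holonomicity of $M$). Any non-zero $p(\theta )\in \cit[\theta ]$ remains non-zero in the localisation and therefore acts injectively on the free module $G$, so $G$ is $\cit[\theta ]$-torsion free. By definition, $G_{0}$ is the image of $\Omega^{n}(U)[\theta ]$ in $G$, hence a $\cit[\theta ]$-submodule of $G$, and inherits torsion-freeness.

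The main technical step is the global Koszul exactness used above: the local regularity of the Jacobian sequence at isolated critical points is classical, but upgrading it to a global statement on $U$ requires identifying the cohomology sheaves of $(\Omega^{\bullet}(U), df\wedge )$, showing they are supported on the finite critical locus, and invoking vanishing of higher coherent cohomology on the affine variety $U$.
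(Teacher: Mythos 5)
Your proof is correct. For part (1) you take the same route as the paper: the key input is the generalized de Rham lemma (exactness of $(\Omega^{\bullet}(U), df\wedge)$ below top degree when the critical points are isolated), which the paper simply cites; you supply both a correct sketch of why it holds (local Koszul exactness on a regular sequence, exactness off the critical set, globalization by affine vanishing) and the pole-order induction that the paper leaves implicit, including the check that $(\theta d - df\wedge)^2 = 0$ so that the corrected $\tilde\eta$ still maps to $\omega$. For part (2) you diverge from the paper, which derives torsion-freeness from the same de Rham lemma via the presentation in (1); instead you observe that $G_0$ is by construction a $\cit[\theta]$-submodule of $G$, and $G$ is free over the domain $\cit[\theta,\theta^{-1}]$ by the regular holonomicity of $M$ established earlier in the section, hence has no $\cit[\theta]$-torsion. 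Your argument for (2) is shorter and in fact shows that this point holds without the isolated-critical-points hypothesis; the paper's route has the mild advantage of staying entirely within the de Rham presentation rather than invoking the earlier D-module input.
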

\begin{proof} Both points follow from the generalized de Rham lemma: if $f$ has only isolated critical points on $U$, the cohomology groups of the complex $(\Omega^{\bullet}(U),\ df\wedge )$ 
all vanish, except perhaps the one in top degree which is equal to
$\frac{\Omega^{n}(U)}{df\wedge\Omega^{n-1}(U)}$.
\end{proof}

\begin{corollary}
Assume that $f$ has only isolated critical points $U$. If $G_{0}$ is free of finite type over $\cit [\theta]$ then it is free of rank $\mu$.
\end{corollary}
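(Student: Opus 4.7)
The plan is to compute the fibre $G_{0}/\theta G_{0}$ and then invoke the fact that for a free $\mathbb{C}[\theta]$-module of finite type, the rank is detected by the dimension of any fibre.

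First I would use the presentation provided by Proposition \ref{prop:G0SingIsolees}(1), namely
\begin{equation}\nonumber
G_{0}=\frac{\Omega^{n}(U)[\theta ]}{(\theta d-df\wedge )\Omega^{n-1}(U)[\theta ]},
\end{equation}
and compute the reduction modulo $\theta$. For a section $\omega =\sum_{i\geq 0}\omega_{i}\theta^{i}\in\Omega^{n-1}(U)[\theta]$ we have $(\theta d-df\wedge)\omega =\sum_{i}d\omega_{i}\theta^{i+1}-\sum_{i}df\wedge\omega_{i}\theta^{i}$; reducing mod $\theta$ only the term $-df\wedge\omega_{0}$ survives. Hence
\begin{equation}\nonumber
G_{0}/\theta G_{0}=\frac{\Omega^{n}(U)}{df\wedge\Omega^{n-1}(U)},
\end{equation}
and this is, by the hypothesis that $f$ has only isolated critical points on $U$, a $\mathbb{C}$-vector space of dimension equal to the global Milnor number $\mu$.

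Now if $G_{0}$ is free of finite type over $\mathbb{C}[\theta]$, say of rank $r$, then $G_{0}/\theta G_{0}\simeq \mathbb{C}^{r}$ as a $\mathbb{C}$-vector space, so comparing dimensions yields $r=\mu$, which is the desired conclusion. There is essentially no obstacle here: the content of the corollary is the computation of the fibre at $\theta=0$, and everything else is linear algebra. (One could alternatively check consistency by localizing at $\theta$: $G=G_{0}[\theta^{-1}]$ has rank $r$ over $\mathbb{C}[\theta,\theta^{-1}]$ as well, and this rank equals $\mu+\nu$ in the setting of Proposition \ref{prop:RangM}; the corollary is thus really asserting that freeness of $G_{0}$ forces $\nu=0$ vanishing cycles at infinity to contribute trivially, but this discussion is not needed for the proof itself.)
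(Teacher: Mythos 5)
Your argument is correct and is essentially the paper's own proof: both reduce to the identification $G_{0}/\theta G_{0}=\Omega^{n}(U)/(df\wedge\Omega^{n-1}(U))$, whose dimension is $\mu$ by the isolated-critical-points hypothesis, and then read off the rank from this fibre. The only difference is that you spell out the computation of the reduction modulo $\theta$, which the paper leaves implicit.
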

\begin{proof} Follows from proposition \ref{prop:G0SingIsolees}: 
$G_0$ is free because it has no torsion and the assertion about the rank is given by the formula
$\frac{G_{0}}{\theta G_{0}}=\frac{\Omega^{n}(U)}{df\wedge \Omega^{n-1}(U)}$.
\end{proof}

\begin{corollary}\label{coro:RangGG0}
Assume that $f$ has only isolated critical points $U$. Then 
\begin{enumerate}
\item the rank of $G$ over $\cit [\theta ,\theta^{-1}]$ is greater or equal than $\mu$,
\item  if $G_{0}$ is of finite type then $\Rang G =\mu $.
\end{enumerate}
\end{corollary}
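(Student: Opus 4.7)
The plan is to exploit the fact, established in Proposition \ref{prop:G0SingIsolees}, that $G_{0}$ is $\cit[\theta]$-torsion free and that $G_{0}/\theta G_{0}=\Omega^{n}(U)/df\wedge\Omega^{n-1}(U)$ has dimension $\mu$. The first assertion will follow by producing $\mu$ elements of $G$ which are linearly independent over $\cit[\theta,\theta^{-1}]$; the second will follow by identifying $G$ with the localization $G_{0}[\theta^{-1}]$.

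For part (1), I would lift a $\cit$-basis of $\Omega^{n}(U)/df\wedge\Omega^{n-1}(U)$ to $\mu$ elements $\omega_{1},\dots,\omega_{\mu}\in\Omega^{n}(U)$ and consider their classes $[\omega_{1}],\dots,[\omega_{\mu}]$ in $G_{0}\subset G$. The central observation is a small Nakayama-type lemma: any $\cit[\theta]$-linear relation $\sum_{i}p_{i}(\theta)[\omega_{i}]=0$ can be attacked by expanding $p_{i}(\theta)=c_{i}+\theta q_{i}(\theta)$; reduction modulo $\theta$ in $G_{0}$ forces $c_{i}=0$ by choice of basis, and then $\theta$-torsion-freeness (Proposition \ref{prop:G0SingIsolees} (2)) lets us cancel $\theta$ and iterate. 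Hence the $[\omega_{i}]$ are $\cit[\theta]$-linearly independent. Since $\cit[\theta,\theta^{-1}]$ is a flat extension of $\cit[\theta]$ (a relation over $\cit[\theta,\theta^{-1}]$ can be cleared of denominators by multiplying by a high power of $\theta$), the $[\omega_{i}]$ remain linearly independent in $G$, giving $\mathrm{Rank}\,G\geq\mu$.

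For part (2), if $G_{0}$ is of finite type, the previous corollary tells us that $G_{0}$ is $\cit[\theta]$-free of rank $\mu$. It remains to identify $G$ with the localization $G_{0}[\theta^{-1}]$. Surjectivity of the natural map $G_{0}[\theta^{-1}]\to G$ comes directly from the presentation $G=\Omega^{n}(U)[\tau,\tau^{-1}]/d_{f}(\Omega^{n-1}(U)[\tau,\tau^{-1}])$, since every class admits a representative $\sum_{i}\omega_{i}\tau^{i}=\sum_{i}\omega_{i}\theta^{-i}$ which comes from $G_{0}$ after multiplying by a high enough power of $\theta$. Injectivity follows because $\theta=\tau^{-1}$ acts invertibly on $G$ (so an equality $\omega\theta^{-k}=0$ in $G$ forces $\omega=0$ in $G$, and hence in the submodule $G_{0}$). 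Therefore $G\simeq G_{0}\otimes_{\cit[\theta]}\cit[\theta,\theta^{-1}]$ is free of rank $\mu$ over $\cit[\theta,\theta^{-1}]$.

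The only mildly delicate step is the Nakayama-style argument in (1), which requires combining the torsion-freeness of $G_{0}$ with the precise computation of $G_{0}/\theta G_{0}$; without the isolated critical point hypothesis (used in Proposition \ref{prop:G0SingIsolees}) neither ingredient is available, which is exactly why the statement is conditioned on that hypothesis. Everything else reduces to standard flat base change and localization bookkeeping.
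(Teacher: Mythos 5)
Your proof is correct and follows essentially the same route as the paper: part (1) uses torsion-freeness of $G_0$ together with the computation of $G_0/\theta G_0$ to produce $\mu$ elements linearly independent over $\cit[\theta]$, hence over $\cit[\theta,\theta^{-1}]$ in the free module $G$; part (2) observes that a finitely generated $G_0$ is free of rank $\mu$ and is a lattice in $G$, i.e.\ $G\simeq G_0\otimes_{\cit[\theta]}\cit[\theta,\theta^{-1}]$. The paper merely states these steps more tersely, while you spell out the Nakayama-style induction and the localization argument.
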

\begin{proof}
1. Let $\omega_1 ,\cdots ,\omega_{\mu}$ be sections of $G_0$ whose classes are independent in $G_{0}/ \theta G_{0}$: by proposition
\ref{prop:G0SingIsolees} these sections are linearly independent over $\cit [\theta ]$ and generate a free 
submodule of rank $\mu$ in $G$. Because $G$ is free of finite type the assertion follows.   2. Under the assumption,  $G_0$ is free of 
rank $\mu$ and provides a lattice in $G$: the rank of $G$ is $\mu$.
\end{proof}

\noindent As a consequence, $G_0$ will not be of finite type if $\Rang G>\mu$. If $f$ has only isolated critical points we have $\Rang G\geq\mu$ and we get the following precisions
if $f$ has isolated singularities including at infinity:

\begin{theorem}\label{theo:rangMG}
If $f$ has at most isolated singularities including at infinity one has
\begin{equation}\label{eq:rangM}
\Rang M =\mu +\nu +h^{n-1}(U)-h^{n}(U)
\end{equation}
and
\begin{equation}\label{eq:rangG}
\Rang G =\mu +\nu 
\end{equation}
where $\mu$ is the global Milnor number of $f$.
\end{theorem}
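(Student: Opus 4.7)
The plan is to read off both ranks from formulas already recorded in Section \ref{sec:CyclesEvInfini}: the generic rank of $M$ via the Riemann--Hilbert correspondence, and the rank of $G$ via the standard rank formula for the localized Fourier--Laplace transform of a regular holonomic $\mathcal{D}$-module on the affine line.

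For (\ref{eq:rangM}), the rank of $M$ is by definition the generic rank $\dim_{\mathbb{C}(t)}(M \otimes_{\mathbb{C}[t]} \mathbb{C}(t))$, and by Riemann--Hilbert this coincides with the rank of the local system that is the restriction of the solution sheaf of $M$ to the maximal open set $V = S - \Delta$ on which it is smooth. Using the identification $DR(M) = {}^{p}\mathcal{H}^{0}(Rf_{*}\mathbb{C}_{U}[n])$ together with the isomorphism (\ref{eq:H-1P}) --- applicable because the fibres $f^{-1}(s)$ are affine of dimension $n-1$, whence $R^{n}f_{*}\mathbb{C}_{U} = 0$ --- this local system is (a shift of) $R^{n-1}f_{*}\mathbb{C}_{U}|_{V}$, of rank $m$. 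Plugging this into Proposition \ref{prop:RangM}(1) yields exactly (\ref{eq:rangM}).

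For (\ref{eq:rangG}), the regular holonomic $\mathcal{D}$-module $M$ on $\mathbb{A}^{1}$ has only isolated (regular) singularities at finitely many points of $\mathbb{C}$ and is regular at $\infty$, so $G = \widehat{M}[\tau^{-1}]$ is a free $\mathbb{C}[\tau,\tau^{-1}]$-module of finite rank (cf.\ \cite{S1}, V). By the classical rank formula for the Fourier--Laplace transform of a regular holonomic module on $\mathbb{A}^{1}$, this rank is
$$\Rang G \;=\; \sum_{s \in \mathbb{C}} \dim_{\mathbb{C}}{}^{p}\varphi_{t-s}\bigl(DR(M)\bigr).$$
Combining this with the identity $\dim_{\mathbb{C}}{}^{p}\varphi_{t-s}\,{}^{p}\mathcal{H}^{n}(Rf_{*}\mathbb{C}_{U}) = \mu_{s} + \nu_{s}$ recorded just before Proposition \ref{prop:RangM} and summing over the finitely many singular values $s$ gives $\Rang G = \mu + \nu$.

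The main obstacle is the careful application of the Fourier-transform rank formula: one must check that $M$ carries no constant submodule (otherwise an extra shift appears on the right-hand side) and that $M$ is regular at infinity (otherwise an irregularity contribution must be added). Both properties are consequences of the standing assumption that $f$ has isolated singularities including at infinity, as already exploited in Lemma \ref{lemma:Milibres} and Remark \ref{rem:FormelleSemiSimple}; the remainder of the argument is bookkeeping between the $\mathcal{D}$-module and the perverse sheaf sides.
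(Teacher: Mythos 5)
Your derivation of (\ref{eq:rangM}) is essentially the paper's own argument: identify $\Rang M$ with the generic rank of $DR({\cal M})=(R^{n-1}f_{*}\cit_{U})[1]$ (using $R^{n}f_{*}\cit_{U}=0$, i.e.\ Artin vanishing on the affine fibres) and feed the resulting equality $\Rang M=m$ into proposition \ref{prop:RangM}(1). For (\ref{eq:rangG}), however, you take a genuinely different route. The paper deduces (\ref{eq:rangG}) \emph{from} (\ref{eq:rangM}): it combines $\Rang G=\Rang M+\dim(\coker\partial_{t})-\dim(\ker\partial_{t})$ from \cite[Proposition V.2.2]{S1} with an explicit computation of the two correction terms, namely the long exact sequence $\cdots\rightarrow M^{(j)}\stackrel{\partial_{t}}{\rightarrow}M^{(j)}\rightarrow H^{n+j}(U,\cit)\rightarrow\cdots$ together with lemma \ref{lemma:Milibres} (surjectivity of $\partial_{t}$ on $M^{(-1)}$, which is where the hypothesis on singularities at infinity is used) to get $\ker\partial_{t}\simeq H^{n-1}(U,\cit)$ and $\coker\partial_{t}\simeq H^{n}(U,\cit)$, whence the cancellation of $h^{n-1}(U)-h^{n}(U)$. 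You instead invoke the stationary-phase rank formula $\Rang G=\sum_{s}\dim\,{}^{p}\varphi_{t-s}(DR(M))$ for a regular holonomic module and plug in the identity $\dim_{\cit}{}^{p}\varphi_{t-s}\,{}^{p}{\cal H}^{n}(Rf_{*}\cit_{U})=\mu_{s}+\nu_{s}$ recorded at the end of section \ref{sec:SingIsolInfty}. The two routes are equivalent: your rank formula is obtained from \cite[Proposition V.2.2]{S1} plus the local index formula $\dim(\coker\partial_{t})-\dim(\ker\partial_{t})=-\Rang M+\sum_{s}\dim\,{}^{p}\varphi_{t-s}(DR(M))$, so it buys you a proof of (\ref{eq:rangG}) that is independent of (\ref{eq:rangM}) and of the cohomology of $U$, at the price of citing a packaged result (available e.g.\ in \cite{Mal} or \cite{S1}) rather than the elementary exact-sequence computation. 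Two minor points: the caveat about constant submodules is superfluous, since a constant factor contributes $0$ to both sides of the stationary-phase formula (and a punctual module contributes $1$ to both); and the regularity of $M$, hence the absence of an irregularity correction, holds for the Gauss--Manin system of any regular function as recalled in section \ref{sec:GaussManin} --- it is not a consequence of the isolated-singularities-at-infinity hypothesis, which is needed only to make $\mu_{s}+\nu_{s}$ finite and to apply proposition \ref{prop:RangM}.
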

\begin{proof} 
The rank of $M$ is $\dim_{\cit (t)}\cit (t)\otimes_{\cit [t]}M$, 
and this is also equal to the rank of
$\cit [t, p^{-1}(t)]\otimes_{\cit [t]}M$ where $p^{-1}(0)$ is the set of the singular points of  $M$.
Let ${\cal M}={\cal O}\otimes M$.
By formula (\ref{eq:H-1P}) one has 
$DR({\cal M})=(R^{n-1} f_* \cit_U )[1]$
and it follows that ${\cal M}_a$ is a free
${\cal O}_a$-module of rank 
$\dim H^{n-1}( f^{-1}(a), \cit )$
for $a\notin p^{-1}(0)$. Because ${\cal O}_a \otimes_{\cit [t]}M$ is isomorphic to 
$({\cal O}_a)^{\Rang M}$, the rank of $M$ is equal to $\dim H^{n-1}( f^{-1}(a), \cit )$ and
formula (\ref{eq:rangM}) follows from proposition \ref{prop:RangM}.
For formula (\ref{eq:rangG}), we use the exact sequence
$$...\rightarrow M^{(j)}\stackrel{\partial_{t}}{\rightarrow} M^{(j)}\rightarrow H^{n+j}(U,\cit )
\rightarrow\cdots\rightarrow H^{n-1}(U,\cit )\rightarrow M\stackrel{\partial_{t}}{\rightarrow}M\rightarrow H^{n}(U,\cit )\rightarrow 0$$
for $j\leq 0$. Because $f$ has isolated singularities including at infinity, it follows from lemma \ref{lemma:Milibres} that $\partial_{t}$ is surjective on $M^{(-1)}$
and this gives the exact sequence 
$$0\rightarrow H^{n-1}(U,\cit )\rightarrow M\stackrel{\partial_{t}}{\rightarrow}M\rightarrow H^{n}(U,\cit )\rightarrow 0$$
We also have
$$ \Rang G =\Rang M +\dim (\coker \partial_t )-\dim (\ker \partial_t)$$
see for instance \cite[Proposition V.2.2]{S1}, and the second formula follows from the first one.
\end{proof}

The converse of the second assertion of corollary \ref{coro:RangGG0}  is true, at least if $f$ 
 has at most isolated singularities including at infinity:

\begin{corollary}
Assume that $f$ has at most isolated singularities including at infinity and that $\Rang G =\mu $. Then $G_0$ is of finite type.
\end{corollary}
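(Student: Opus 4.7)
The plan would be to reduce the finite type property of $G_0$ to cohomological tameness of $f$, and then invoke Sabbah's theorem.

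First I would combine the hypothesis $\Rang G = \mu$ with formula (\ref{eq:rangG}) of Theorem \ref{theo:rangMG} to deduce that $\nu = 0$. By the very definition (\ref{eq:NotNu}), this forces $\nu_{x,s} = 0$ for every $x \in X - U$ and every $s \in S$; hence no point of $X - U$ belongs to the support $\Sigma_s$ of $\varphi_{\overline{f}-s}Rj_{*}\cit_{U}$. In other words $\Sigma_{s}\subset U$ for all $s\in S$, which is precisely the definition of cohomological tameness recalled in section \ref{sec:SingIsolInfty}.

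Second, I would invoke Sabbah's theorem \cite{S}: for a cohomologically tame regular function on an affine manifold, the Brieskorn module $G_0$ is a free $\cit[\theta]$-module of rank equal to the global Milnor number $\mu$. In particular $G_0$ is of finite type, which is the desired conclusion. Note that the rank statement is consistent with part 2 of corollary \ref{coro:RangGG0}, since under tameness the two ranks already agree.

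The only substantial ingredient is the appeal to \cite{S}; the translation from the numerical hypothesis $\Rang G = \mu$ to cohomological tameness is merely a bookkeeping consequence of the setup in section \ref{sec:SingIsolInfty} and of Theorem \ref{theo:rangMG}. I therefore do not anticipate any further obstacle in this proof, and a self-contained argument avoiding \cite{S} would be the only way to make it more transparent.
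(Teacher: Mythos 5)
Your proof is correct and follows essentially the same route as the paper's: both deduce $\nu=0$ from $\Rang G=\mu$ via formula (\ref{eq:rangG}), conclude cohomological tameness, and then invoke Sabbah's theorem \cite{S} for the finiteness (indeed freeness of rank $\mu$) of $G_0$. Your intermediate unpacking of why $\nu=0$ forces $\Sigma_s\subset U$ is just a more explicit version of the step the paper leaves implicit.
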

\begin{proof}
By theorem \ref{theo:rangMG} we have $\nu =0$ and thus $f$ is cohomologically tame: the result follows then from \cite{S}.
\end{proof}

\subsection{Basic example}
\label{sec:ExempleClassique}

We test the previous results on a classical wild example, see \cite{Br} for instance. Let
$f$ be defined on $\cit^{2}$ by  
$f(x,y)=y(xy-1)$.
It has no critical points at finite distance.

\begin{proposition}
\begin{enumerate}
\item $f$ has one singular point at infinity. The number $\nu$ of vanishing cycles at infinity is equal to
$1$ and $B(f)=\{0\}$.
\item  The $\cit [\tau ,\tau^{-1}]$-module $G$ is free of rank $1$ and the class $[dx\wedge dy]$ of $dx\wedge dy$ is a basis of it.
\end{enumerate}
\end{proposition}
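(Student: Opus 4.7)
The plan is to handle the two assertions in sequence, using the standard compactification by the graph from section \ref{sec:CompStandard} and the rank formula of Theorem \ref{theo:rangMG}.

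For part 1, first note that $(\partial_x f,\partial_y f) = (y^2,\, 2xy-1)$ has no zero on $\cit^2$, so $\mu = 0$. The homogenization is $F(X_0,X_1,X_2,t) = X_1 X_2^2 - X_0^2 X_2 - t X_0^3$, and the base locus $A$ consists of the two points $(0{:}1{:}0)$ and $(0{:}0{:}1)$. At the latter, the chart $X_2 = 1$ gives $X_1 - X_0^2 - t X_0^3 = 0$ with $\partial_{X_1} F$ a unit, so every fibre is smooth there. Near $(0{:}1{:}0)$, the chart $X_1 = 1$ with coordinates $u = X_0,\ v = X_2$ gives $F_t = v^2 - u^2 v - t u^3$, and one checks that $\yit_{sing} = \{(0{:}1{:}0)\}\times\cit$, so Corollary \ref{coro:nu} applies. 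Eliminating $v = u^2/2$ via $\partial_v F_t = 0$ turns $\partial_u F_t$ into $-u^2(u + 3t)$, whose multiplicity at the origin is $2$ for $t\neq 0$ and $3$ for $t = 0$. This gives $\nu = \nu_{(0:1:0),0} = 3-2 = 1$ and $\nu_a = 0$ for $a\neq 0$, and since $\mu_a \equiv 0$ the criterion quoted after Proposition \ref{prop:RangM} yields $B(f) = \{0\}$.

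For part 2, Theorem \ref{theo:rangMG} immediately gives $\Rang G = \mu + \nu = 1$, and since $G$ is free of rank one it is enough to show that the class of $dx\wedge dy$ generates $G$ over $\cit[\tau,\tau^{-1}]$. Applying $d_f$ to $h(x,y)\,dy$ and $g(x,y)\,dx$ produces, modulo $dx\wedge dy$, the two identities $\tau y^2 h \equiv \partial_x h$ and $\tau(2xy-1)g \equiv \partial_y g$. The first with $h = y^{j-2}$ gives $y^j \equiv 0$ in $G$ for every $j\geq 2$, while with $h = x^{i+1}$ it yields $\tau x^{i+1}y^2 \equiv (i+1)x^i$. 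Combining this last identity with the second relation applied to $g = x^i y$ gives $x^i y \equiv (2i+1)\tau^{-1} x^i$ for every $i\geq 0$, in particular $y \equiv \tau^{-1}$. The second relation with $g = x^i$ reads $2 x^{i+1}y \equiv x^i$, and composing with the previous identity shows $x^{i+1}\equiv \tau\cdot x^i/(2(2i+3))$; an induction on $i$ then presents $x^i$ as a nonzero scalar multiple of $\tau^i\cdot 1$. Finally, the mixed monomials $x^i y^j$ with $i\geq 1,\ j\geq 2$ reduce via $\tau y^2\cdot x^i y^{j-2} \equiv i x^{i-1} y^{j-2}$, which an induction on $i+j$ brings back to the previously treated cases.

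The main obstacle is not conceptual but combinatorial: keeping track of the scalar factors in the cascade of reductions above, so that each monomial lands on a unit of $\cit[\tau,\tau^{-1}]$ times $[dx\wedge dy]$ rather than merely on some multiple of it. Once this is verified, the surjection $\cit[\tau,\tau^{-1}]\to G$ sending $1$ to $[dx\wedge dy]$ is automatically an isomorphism, since $G$ is free of rank one and hence torsion-free.
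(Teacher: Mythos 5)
Your proposal is correct and follows essentially the same route as the paper: part 1 uses the graph compactification and Corollary \ref{coro:nu} with the identical Milnor number computation ($3$ for $t=0$ versus $2$ for $t\neq 0$), and part 2 combines Theorem \ref{theo:rangMG} with a monomial-by-monomial reduction showing that $[dx\wedge dy]$ generates $G$. The only (harmless) difference is that you derive the reduction relations separately from $d_f(h\,dy)$ and $d_f(g\,dx)$ rather than from the single exact form $d(x^r y^p)$, which lets you avoid the paper's case distinction $2r=p$ versus $2r\neq p$.
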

\begin{proof} 
1. This result is well-known but we give the proof in order to test the notations of section \ref{sec:CompStandard}.
Homogeneization of the fibers of $f$ gives   
$$F(X_0 , X_1 , X_2 ,t)=X_1 X_2^{2}-X_2 X_0^{2}-tX_0^{3}=0$$
where the equation $X_0 =0$ defines the hyperplane at infinity. Notice that $\yit_{sing}=\{p\}\times \cit$ where
$p=(0:1:0)$ and in order to compute the number of vanishing cycles at infinity we can use corollary \ref{coro:nu}.
The Milnor number of the singularity $u^{2}-uv^{2}-tv^{3}=0$ at $(0,0)$
is equal to $2$ for all $t\neq 0$ and is equal to $3$ for $t=0$. 
The point $p$ is thus an isolated singular point of $f$ 
 at infinity for which $\nu_{p,0}=1$.\\
2. By 1. and theorem \ref{theo:rangMG},
we know that $G$ is free of rank $1$ over $\cit [\tau ,\tau^{-1}]$.
The differential form
$$\omega =rx^{r-1}y^{p}dx +px^{r}y^{p-1}dy,$$
 $r,p\geq 1$, is exact. We thus have $[df\wedge \omega] =0$ and 
\begin{equation}\label{eq:lemme1}
[(2r-p)x^{r}y^{p+1}dx\wedge dy]=[rx^{r-1}y^{p}dx\wedge dy]
\end{equation}
in $G$ for $r,p\geq 1$. 
An analogous computation shows that $[y^{p+1}dx\wedge dy]=0$ if $p\geq 1$ and that
$[2rx^{r}ydx\wedge dy]=[rx^{r-1}dx\wedge dy]$
if $r\geq 1$. 
If $2r\neq p$, one can express in particular $[x^r y^{p+1}dx\wedge dy]$ in terms of $[x^{r-1} y^{p}dx\wedge dy]$. If $2r=p$, notice that 
\begin{equation}\label{eq:lemme2}
\tau [x^r y^{2r+1}dx\wedge dy]=[x^r y^{2r}dx\wedge dy]
\end{equation}
Indeed, $df\wedge x^r y^{2r+1}dx =(-2x^{r+1}y^{2r+2}+x^r y^{2r+1})dx\wedge dy$ hence
$$(2r+1)[x^r y^{2r}dx\wedge dy]=2\tau [x^{r+1} y^{2r+2}dx\wedge dy]-\tau [x^r y^{2r+1}dx\wedge dy]$$
and we get formula (\ref{eq:lemme2}) using formula (\ref{eq:lemme1}).
This comptutation holds also for $r=0$, in particular $\tau [ydx\wedge dy]=[dx\wedge dy]$.
Last,
$$\tau^{-1}[x^{q}dx\wedge dy]=[df\wedge\frac{x^{q+1}}{q+1}dy] =[y^{2}\frac{x^{q+1}}{q+1}dx\wedge dy]=[a_{q}x^{q}ydx\wedge dy]=[b_{q}x^{q-1}dx\wedge dy]$$
for $q\geq 1$, where $a_{q}$ and $b_{q}$ are non zero constant, as shown by formula (\ref{eq:lemme1}). 
This shows that one can express the class of any form in terms of $[dx\wedge dy]$, which is thus a generator of $G$. 
\end{proof}


\section{Hypersurfaces in weighted projective spaces}

In this section we recall basic results about hypersurfaces in weighted projective spaces.
We will consider only {\em smooth} hypersurfaces and the goal of this section is to give a characterization of such objects, see theorem \ref{theo:hyperlisse}.
 Our references are \cite{Dimca}, \cite{Dolgachev} and \cite{Fletcher}.

\subsection{Smooth hypersurfaces in weighted projective spaces}
\label{sec:SingEPP}
Let $w_0 ,\cdots ,w_n$ and $d$ be integers greater than zero. In what follows, except otherwise stated, we will assume that $n\geq 3$ and 
that the weights $w_i$ 
are normalized, that is              
\begin{equation}\nonumber
\gcd (w_0 ,\cdots ,\hat{w}_i ,\cdots ,w_n)=1\ \mbox{for all}\ i=0,\cdots ,n\ \mbox{and}\ w_0 \leq w_{1}\leq\cdots \leq w_n
\end{equation}

\noindent Recall that a polynomial $W$ is quasi-homogeneous of weight $(w_0 ,\cdots ,w_n)$ and of degree $d$ if 
$$W(\lambda^{w_0}u_0,\cdots ,\lambda^{w_n}u_n)=\lambda^d W(u_0,\cdots ,u_n)$$
for any non zero $\lambda$. Equation $W(u_0 ,\cdots , u_n )=0$ defines a hypersurface $H$ (resp. $CH$) of degree $d$ in the weighted projective space 
$\ppit (w):=\ppit (w_0,\cdots ,w_n)$ (resp. $\cit^{n+1}$). 
The hypersurface  $H$ is {\em quasi-smooth} if $CH - \{0\}$ is smooth. 

\begin{remark}
If $d=w_i$ for some index $i$ then  $W=a_i u_i +g(u_0 ,\cdots , \widehat{u_i } ,\cdots , u_n)$, where $a_i \in\cit^*$ and $g$ is quasi-homogeneous. The hypersurface
$H$ is then isomorphic to the weighted projective space $\ppit (w_0 ,\cdots , \widehat{w_{i}}, \cdots ,w_n )$ via 
$$(u_0 ,\cdots ,\widehat{u_i},\cdots ,u_{n})\mapsto (u_0 ,\cdots , -a_{i}^{-1}g(u_{0},\cdots ,\widehat{u_{i}},\cdots ,u_n ),\cdots , u_n)$$
In this situation, we will say that $H$ is {\em a linear cone}.
\end{remark}

For $x\in\ppit (w_0 ,\cdots ,w_{n})$, let
$I(x)=\{j, \ x_j \neq 0\}$
and, for prime $p$,
$$Sing_p (\ppit (w))=\{x\in\ppit (w),\ p \ \mbox{divides}\  w_i \ \mbox{for all}\ i\in I(x)\}$$
The singular locus $\ppit_{sing}(w_{0},\cdots ,w_{n})$ (or $\ppit_{sing}(w)$) of $\ppit (w_0 ,\cdots ,w_{n})$ is 
\begin{equation}\label{eq:SingPw}
\ppit_{sing}(w)=\cup_{p\ prime}Sing_p (\ppit (w))
\end{equation}
The hypersurface $H$ is in {\em general position} with respect to $\ppit_{sing} (w)$ (for short: in general position) if
\begin{equation}\nonumber
\codim_{H}(H\cap \ppit_{sing}(w))\geq 2
\end{equation}
A hypersurface in general position inherits the singularities of the ambient space:

\begin{proposition}\cite[Proposition 8]{Dimca}
\label{prop:adjonction}
The singular locus of a quasi-smooth hypersurface $H$ in general position is $H_{sing}=H\cap \ppit_{sing}(w)$.
\qed
\end{proposition}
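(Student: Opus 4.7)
The plan is to pass to an \'etale-local model of $\ppit(w)$ as a finite quotient and apply the Chevalley--Shephard--Todd theorem. Fix $x\in H$ with a lift $\tilde{x}\in CH-\{0\}$. The stabilizer of $\tilde{x}$ under the $\cit^{*}$-action is the cyclic group $G_{x}$ of order $m:=\gcd(w_{i}:i\in I(x))$, and any slice $S=\{u_{i_{0}}=\tilde{x}_{i_{0}}\}$ with $i_{0}\in I(x)$ transverse to the orbit identifies $\ppit(w)$ near $x$ with $S/G_{x}$, and $H$ near $x$ with $V/G_{x}$ where $V:=CH\cap S$. By quasi-smoothness $CH$ is smooth at $\tilde{x}$, and the Euler identity $\sum_{i}w_{i}\tilde{x}_{i}\partial_{i}W(\tilde{x})=d\cdot W(\tilde{x})=0$ combined with $dW(\tilde{x})\neq 0$ shows that the restriction of $dW(\tilde{x})$ to $TS$ is still nonzero, so $V$ is smooth at $\tilde{x}$. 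Note that by the very definition (\ref{eq:SingPw}), $x\in\ppit_{sing}(w)$ is equivalent to $m>1$.

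The inclusion $H_{sing}\subseteq H\cap\ppit_{sing}(w)$ is then immediate: if $x\notin\ppit_{sing}(w)$ then $G_{x}$ is trivial, $H$ coincides with the smooth variety $V$ near $x$, and $x\notin H_{sing}$.

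For the reverse inclusion, assume $x\in H\cap\ppit_{sing}(w)$, so $m>1$. By Chevalley--Shephard--Todd the quotient $V/G_{x}$ is smooth at $[\tilde{x}]$ if and only if the cyclic group $G_{x}$ acts on $T_{\tilde{x}}V$ by a group generated by pseudo-reflections; since $G_{x}$ is cyclic and acts diagonally, this is equivalent to the fixed subspace $T_{\tilde{x}}V^{G_{x}}$ having codimension at most one in $T_{\tilde{x}}V$. Because $G_{x}$ is finite and $V$ is smooth, the fixed locus $V^{G_{x}}$ is smooth, and $\dim T_{\tilde{x}}V^{G_{x}}=\dim_{\tilde{x}}V^{G_{x}}$. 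Set $J_{m}:=\{j:m\mid w_{j}\}\supseteq I(x)$; then $S^{G_{x}}$ is a local chart for the weighted projective subvariety $Z_{m}:=\{u_{j}=0:j\notin J_{m}\}\cong\ppit(w_{j}:j\in J_{m})\subseteq\ppit_{sing}(w)$ through $x$, and so $V^{G_{x}}=V\cap S^{G_{x}}$ is a local chart for $H\cap Z_{m}$ at $x$.

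The general position hypothesis enters through the bound on $\dim_{\tilde{x}}V^{G_{x}}=\dim_{x}(H\cap Z_{m})$. If $Z_{m}\not\subseteq H$, then $H\cap Z_{m}$ is a proper hypersurface in $Z_{m}$, locally of dimension $|J_{m}|-2$, and $\codim_{H}(H\cap\ppit_{sing}(w))\geq 2$ forces $|J_{m}|-2\leq n-3$; if $Z_{m}\subseteq H$, then $\dim(H\cap Z_{m})=|J_{m}|-1\leq n-3$. In either case $\dim_{\tilde{x}}V^{G_{x}}\leq n-3$, so $T_{\tilde{x}}V^{G_{x}}$ has codimension at least two in the $(n-1)$-dimensional tangent space $T_{\tilde{x}}V$; hence $V/G_{x}$ is singular at $[\tilde{x}]$ and $x\in H_{sing}$. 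The main difficulty is this last step: one must handle both possibilities (whether or not $Z_{m}$ is contained in $H$) and verify in each case that the codimension-two hypothesis is exactly what forces $G_{x}$ to have at least two nontrivial weights on $T_{\tilde{x}}V$, thereby preventing $V/G_{x}$ from being smooth.
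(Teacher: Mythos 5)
The paper offers no proof of this proposition at all: it is quoted directly from \cite[Proposition 8]{Dimca} with a \qed. Your strategy --- slice the $\cit^{*}$-action by the stabilizer $G_{x}=\mu_{m}$, use quasi-smoothness and the Euler relation to see that $V=CH\cap S$ is smooth, and then decide smoothness of $V/G_{x}$ by Chevalley--Shephard--Todd --- is the natural route, and the easy inclusion, the equivalence $x\in\ppit_{sing}(w)\Leftrightarrow m>1$, and the identification of $V^{G_{x}}$ with a chart of $H\cap Z_{m}$ are all correct.

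The gap is in the Chevalley--Shephard--Todd step. Your claim that a cyclic group acting diagonally is generated by pseudo-reflections if and only if its fixed subspace has codimension at most one is false when $m$ is not a prime power: $\mu_{6}$ acting on $\cit^{2}$ with weights $(2,3)$ has trivial fixed subspace, yet it is generated by the pseudo-reflections $\diag(\zeta^{4},1)$ and $\diag(1,-1)$, and $\cit^{2}/\mu_{6}\simeq(\cit/\mu_{3})\times(\cit/\mu_{2})$ is smooth. Such weight patterns do occur on slices of weighted projective spaces (e.g. $\ppit(2,3,6,1)$ at $(0:0:1:0)$, where $m=6$ and the transverse weights are $1,2,3 \bmod 6$), so the bound $\codim_{T_{\tilde{x}}V}T_{\tilde{x}}V^{G_{x}}\geq 2$ by itself does not force $V/G_{x}$ to be singular. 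Correspondingly, your dimension count invokes general position only for the single stratum $Z_{m}=\cap_{p\mid m}Sing_{p}(\ppit(w))$, whereas the hypothesis controls every $H\cap Sing_{p}(\ppit(w))$, and that extra information is exactly what is missing. The repair: fix any prime $p\mid m$ and run the argument for the subgroup $\mu_{p}\subseteq G_{x}$, whose fixed locus in $S$ is a chart of $Sing_{p}(\ppit(w))$ through $x$; general position gives $\dim(H\cap Sing_{p}(\ppit(w)))\leq n-3$, hence $\codim_{T_{\tilde{x}}V}T_{\tilde{x}}V^{\mu_{p}}\geq 2$. For a cyclic group of prime order, being generated by pseudo-reflections does force a fixed hyperplane, so $\mu_{p}$ is not generated by pseudo-reflections; and if the image of $G_{x}$ in $GL(T_{\tilde{x}}V)$ were generated by pseudo-reflections, the $p$-parts of those pseudo-reflections (again pseudo-reflections, being powers of them) would generate its Sylow $p$-subgroup, whose generator would then be a pseudo-reflection, forcing $\codim T_{\tilde{x}}V^{\mu_{p}}\leq 1$ --- a contradiction. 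With that modification your proof closes.
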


\noindent Assume that the degree $d$ hypersurface $H$ is in general position and quasi-smooth. Then
$\omega_{H}\simeq {\cal O}_{\ppit (w)}(d-\sum_{i=0}^{n}w_{i})_{|H}$
where $\omega_{H}$ denotes the canonical bundle, see \cite[Theorem 3.3.4 and Theorem 3.2.4]{Dolgachev} . Put
$w :=\sum_{i=0}^{n}w_i$.
Under the assumptions of proposition \ref{prop:adjonction}, we will say that $H$ 
is {\em Fano} if $d<w$ and {\em Calabi-Yau} if $d=w$.
We will mainly consider the Fano case.

\begin{theorem}\label{theo:hyperlisse}
 Let $H$ be a degree $d$ hypersurface in $\ppit (w_{0},\cdots ,w_{n})$. Assume that
 \footnote{The first and the second conditions imply the third except when $H$ is a degree $d$
hypersurface in $\ppit (1,\cdots , 1, d)$: 
 the purpose of the third condition is to remove the linear cones. This will simplify the statements.}
\begin{enumerate}
\item $\gcd (w_i ,w_j )=1$ for all $i,j$,
\item $w_i$ divides $d$ for all $i$,
\item $w_{i}<d$ for all $i$.
\end{enumerate}
\noindent Then $H$ is not a linear cone, is in general position, quasi-smooth and smooth. 
\end{theorem}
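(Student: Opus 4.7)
The plan is to verify the four assertions in order, taking as the concrete witness the Fermat hypersurface $H\subset\ppit(w)$ defined by $W=\sum_{i=0}^{n}u_i^{d/w_i}$, which is quasi-homogeneous of degree $d$ exactly because of condition (2). That $H$ is not a linear cone follows from (3): a linear cone requires a linear term $a_iu_i$ in some $u_i$, hence $d=w_i$, which (3) forbids; equivalently, (2) and (3) combine to give $d/w_i\geq 2$ for every $i$ (the quotient is a positive integer strictly larger than $1$), so $W$ itself has no linear term.

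For general position I would exploit (1) to pin down $\ppit_{sing}(w)$. Under (1) each prime $p$ divides at most one $w_i$, so the set $Sing_p(\ppit(w))=\{x:p\mid w_j\ \text{for every}\ j\in I(x)\}$ is either empty or reduces to the single coordinate point $e_{i_0}=(0{:}\cdots{:}1{:}\cdots{:}0)$ with $i_0$ the unique index satisfying $p\mid w_{i_0}$. By (\ref{eq:SingPw}), $\ppit_{sing}(w)$ is thus a finite subset of $\{e_0,\ldots,e_n\}$; since $\dim H=n-1\geq 2$, the intersection $H\cap\ppit_{sing}(w)$ automatically has codimension at least $2$ in $H$.

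Quasi-smoothness and smoothness are then settled by a direct computation. The partial derivatives $\partial W/\partial u_i=(d/w_i)\,u_i^{d/w_i-1}$ of the Fermat polynomial vanish simultaneously only at the origin (using $d/w_i-1\geq 1$), so $CH-\{0\}$ is smooth and $H$ is quasi-smooth. Combined with the previous paragraph, proposition \ref{prop:adjonction} then identifies the singular locus of $H$ with $H\cap\ppit_{sing}(w)$; but $W(e_i)=1\neq 0$ for each $i$, so no coordinate point lies on $H$, forcing $H\cap\ppit_{sing}(w)=\emptyset$ and hence smoothness.

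I do not anticipate any genuine mathematical obstacle in carrying this out. The only interpretive care needed is that the theorem should be read as an existence claim (witnessed explicitly by the Fermat $W$ above) — or, via a Bertini-style upgrade, as the statement that the generic member of the degree-$d$ linear system is smooth, since the monomials $u_i^{d/w_i}$ supplied by (2) separate the coordinate axes and so confining the failure locus to a proper closed subvariety of the parameter space. The only combinatorial ingredient of substance is the first paragraph's reduction under (1) of $\ppit_{sing}(w)$ to isolated coordinate points.
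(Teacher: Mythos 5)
Your proof is correct, but it takes a genuinely different route from the paper's. The paper exhibits no witness: it quotes Iano-Fletcher's arithmetic characterizations --- condition (1) gives general position via \cite[I.3.10]{Fletcher}, condition (2) gives quasi-smoothness via \cite[Theorem I.5.1]{Fletcher} --- and then deduces smoothness from Dimca's numerical criterion $q(p)=n-m(p)+k(p)\geq n$ for all primes $p$, where (1) forces $m(p)\leq 1$ and (2) forces $k(p)=1$ whenever $m(p)=1$. You instead verify everything by hand on the Fermat hypersurface $\sum_i u_i^{d/w_i}$ (available precisely because of (2)), using only proposition \ref{prop:adjonction}; your direct identification of $\ppit_{sing}(w)$ under (1) as a subset of the coordinate points is a clean, self-contained replacement for Fletcher's gcd criterion and actually shows that \emph{every} degree $d$ hypersurface is in general position here, not just the generic one. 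The trade-off is the interpretive point you flag yourself: Fletcher's statements, and hence the theorem as the paper uses it, concern the general member of the degree-$d$ linear system, whereas your computation treats one explicit member. To recover the paper's statement you need the Bertini-style upgrade you only sketch --- quasi-smoothness is an open condition on the coefficients, and avoidance of the finitely many coordinate singular points is open as well, so smoothness of the Fermat propagates to the generic hypersurface. That step is routine and I would not call its omission a gap, but it should be written out if your argument is to stand in for the paper's; in exchange, your approach is more elementary and produces an explicit smooth hypersurface rather than a pure genericity assertion.
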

\begin{proof} By \cite[I.3.10]{Fletcher},
a degree $d$ hypersurface is in general position if and only if
$$\gcd (w_{0},\cdots ,\hat{w}_{i},\cdots , \hat{w}_{j},\cdots ,w_{n})|d$$
for all $i,j$, $i\neq j$, and
 $$\gcd (w_{0},\cdots ,\hat{w}_{i},\cdots ,\cdots ,w_{n})=1$$
for all $i$. Therefore the first condition
shows that $H$ is in general position. 
The second condition shows that $H$ is quasi-smooth, see \cite[Theorem I.5.1]{Fletcher}. 
Last, and in order to show that $H$ is smooth we use the following numerical criterion \cite{Dimca}: 
for any prime $p$, let us define                 
\begin{equation}\nonumber
m(p)=card \{i; p\ \mbox{divides}\ w_i\},\ k(p)=1\ \mbox{if}\ p\ \mbox{divides}\ d,\ 0\ \mbox{otherwise},\ q(p)=n-m(p)+k(p)
\end{equation}
Then the quasi-smooth and in general position degree $d$ hypersurface $H$ is smooth if and only if 
$q(p)\geq n $
for any prime $p$.
The first condition shows that $m(p)\leq 1$: if $m(p)=0$ we get, by the very definition, $q(p)\geq n$; 
if $m(p)=1$ the second condition shows that $k(p)=1$ and thus $q(p)= n$.
\end{proof}

\begin{example}(Surfaces)\label{ex:Surfaces} 
 The degree $6$ hypersurface in $\ppit (1,1,2,3)$ is in general position and smooth. It is a Fano surface. 
The other smooth Fano surfaces are the surfaces of degree $2$
or $3$ in $\ppit (1,1,1,1)$ and surfaces of degree $4$ in $\ppit (1,1,1,2)$.
\end{example}

\begin{remark}(Curves)
The previous results have been established for $n\geq 3$. 
If $H$ is a curve of degree $d$ in $\ppit (w_{0},w_{1},w_{2})$ then $H$ is in general position, is smooth and is not a linear cone if and only if
the conditions of theorem \ref{theo:hyperlisse}
are satisfied \cite[Theorem II.2.3]{Fletcher}.
\end{remark}

\subsection{The quantum differential equation of a smooth hypersurface in a weighted projective space}
\label{sec:ODQ}
 Let $H$ be a degree $d$ smooth hypersurface in the weighted projective
space $\ppit (w_{0},\cdots ,w_{n})$. The differential operator  
\begin{equation}\label{eq:OpDiffQuant}
P_{H}=
\prod_{i=0}^{n}[(w_i \theta q\partial_q )(w_i \theta q\partial_q -\theta)\cdots (w_i \theta q\partial_q -(w_i -1)\theta )]
-q(d \theta q\partial_q +\theta )\cdots (d \theta q\partial_q +d\theta )
\end{equation}
 is called the {\em quantum differential operator} of $H$
($q$ is the quantization variable). We will often write $P$ instead of $P_{H}$.
The key point is that the quantum differential equation $P_H =0$, which depends only on combinatorial data, can be used in order to describe the small quantum cohomology of
the $H$, 
see for instance \cite{CG} and section \ref{sec:PCQ}.

\begin{proposition}\label{prop:rangGKZ}
Under the assumptions of theorem \ref{theo:hyperlisse}, the module
\begin{equation}\label{eq:GKZ}
M_{A}:=\cit [\theta , q, q^{-1}]<\theta q\partial_{q}>/ \cit [\theta , q, q^{-1}]<\theta q\partial_{q}> P_H
\end{equation}
is a free $\cit [\theta , q, q^{-1}]$-module of rank $n$. 
\end{proposition}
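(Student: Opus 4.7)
The plan is to work inside the ring ${\cal R} := \cit[\theta,q,q^{-1}]\langle D\rangle$ with $D := \theta q\partial_q$ and commutation $[D,q]=\theta q$ (so $q\cdot f(D) = f(D-\theta)\cdot q$ for every polynomial $f$), and to factor $P_H$ in ${\cal R}$ so as to extract a rightmost operator $Q$ of $D$-order exactly $n$ with invertible leading coefficient. The rank statement will then follow by the standard left-division algorithm in a skew polynomial ring with unit leading coefficient.

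First I would push $q$ to the right in the second summand of $P_H$:
\begin{equation}\nonumber
q\prod_{j=1}^{d}(dD+j\theta) \;=\; \prod_{\ell=0}^{d-1}(dD-\ell\theta)\cdot q,
\end{equation}
which exhibits the factor $dD$ (the $\ell=0$ term) on the left. The first summand factors manifestly as $\bigl(\prod_{i} w_i\bigr)\,D^{n+1}\,C(D)$, where I set $C(D) := \prod_{i=0}^{n}\prod_{k=1}^{w_i-1}(w_iD-k\theta)$.

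The key combinatorial step, which I expect to be the main obstacle, is the divisibility $C(D) \,\bigm|\, \prod_{\ell=1}^{d-1}(dD-\ell\theta)$ in $\cit[\theta][D]$. Using $w_i \mid d$ (hypothesis (2) of Theorem~\ref{theo:hyperlisse}) the roots $(k/w_i)\theta$ of $C$ become $(ke_i/d)\theta$ with $e_i := d/w_i$, placing them among the roots of the right-hand product; the coprimality condition $\gcd(w_i,w_j)=1$ for $i\neq j$ (hypothesis (1)) rules out coincidences, since $k_1 e_i = k_2 e_j$ with $i\neq j$ and $0 < k_\ell < w_\ell$ would force $w_i \mid k_1$, contradicting $k_1<w_i$; finally $w_i < d$ (hypothesis (3)), together with the Fano inequality $d < \sum_j w_j$, keeps the roots inside $\{1,\ldots,d-1\}$ and gives the expected degree count.

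Granted this divisibility, one obtains in ${\cal R}$ a factorization $P_H = D\cdot C(D)\cdot Q$ with $Q = c\,D^n - d\,R(D)\,q$, where $c := \prod_i w_i \in \cit^*$ and $R(D)\in\cit[\theta][D]$ has $D$-degree $n - (w-d) < n$ (writing $w := \sum_i w_i$, so that Fano gives $d<w$). Since the leading $D$-coefficient of $Q$ is the unit $c$, the cyclic quotient ${\cal R}/{\cal R}\,Q$ is free over $\cit[\theta,q,q^{-1}]$ with basis $\{1, D, \ldots, D^{n-1}\}$ by the left-division algorithm; identifying $M_A$ with this reduced quotient through the factorization then yields rank $n$.
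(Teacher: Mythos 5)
Your argument is essentially the paper's own proof, recast a little more carefully in the skew ring: your divisibility $C(D)\mid\prod_{\ell=1}^{d-1}(dD-\ell\theta)$, obtained from $w_i\mid d$ and $\gcd(w_i,w_j)=1$, is exactly the paper's cancellation of the common factors $q\partial_q-k/w_i$ after commuting $q$ to the right (its count $v_i=w_i-1$ is your degree bookkeeping, and your coprimality argument supplies the non-collision of roots that the paper leaves implicit), and both proofs conclude by left division from a reduced operator of $D$-degree $n$ with invertible leading coefficient. One caveat, shared with the paper: $\mathcal{R}/\mathcal{R}\,(D\,C(D)\,Q)$ is not isomorphic to $\mathcal{R}/\mathcal{R}\,Q$ (the former is free of rank $w$ by the very same division argument applied to $P_H$ itself), so the final identification requires reading $P_H$ in the definition of $M_A$ as the reduced operator $Q$ --- which is precisely the convention the paper adopts when it speaks of the operator ``after cancellation of the common factors.''
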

\begin{proof}
Notice first that, using the relation $\partial_q q = q\partial_q +1$,
the equation $P_H =0$ takes the form        
\begin{equation}\nonumber
\theta^{\mu}\prod_{i=0}^{n}w_{i}^{w_{i}}\prod_{i=0}^{n}(q\partial_q )(q\partial_q -\frac{1}{w_i})\cdots (q\partial_q -\frac{w_i -1}{w_i})
=\theta^{d}d^d (q\partial_q )(q\partial_q -\frac{1}{d} )\cdots (q\partial_q -\frac{d-1}{d})q
\end{equation}
By assumption, $w_i$ divides $d$: we write $d=m_i w_i$ and we define     
$$v_i :=card \{k\in \{1,\cdots , d-1\}; \ m_i \ \mbox{divides}\ k \}$$
for $i=0,\cdots ,n$. 
Let $k\in \{1,\cdots ,d-1\}$. If $m_i$ divides $k$, write $k=m_i\ell_i$ : we have $d\ell_i = kw_i$ and thus
$\frac{\ell_i}{w_i}=\frac{k}{d}$. Conversely, if there exists $k\in \{1,\cdots ,d-1\}$ such that $\frac{\ell_i}{w_i}=\frac{k}{d}$ then $k=m_{i}\ell_{i}$.
After cancellation of the common factors on the left and on the right of the equation, the quantum differential 
operator $P_H$ is of degree $w_0 +\cdots + w_n -1-\sum_{i=0}^{n}v_i$ in $q\partial_q$. 
If $d=w_{1}\cdots w_{n}$ we have
$v_{i}=w_{i}-1$ for $i=1,\cdots ,n$ and the proposition follows because the rank of $M_A$ is the degree of the irreducible polynomial $P$ in 
$\theta q\partial_q$.
\end{proof}

\section{Givental-Hori-Vafa models of smooth hypersurfaces in weighted projective spaces}

\label{sec:ModelesHV}
We define here, following \cite{Giv0}, \cite{Giv} and \cite{HV}, mirror partners
for the small quantum cohomology of smooth hypersufaces in weighted projective spaces.
Let $H$ be a degree $d$ hypersurface in $\ppit (w_0,\cdots ,w_n)$. 
Otherwise stated, we assume that $H$ is Fano.

\subsection{Givental-Hori-Vafa models as Laurent polynomials}
\label{sec:var}
The Givental-Hori-Vafa model of $H$ (for short: GHV model)
is the function $f(v_0,\cdots ,v_n )=v_0 +v_1 +\cdots +v_n$ on the variety $U$ defined by the equations
\begin{align}\label{eq:ModHV}
 \left\{ \begin{array}{l}
v_0^{w_0}\cdots v_n^{w_n}=q\\
\sum_{j\in J}v_j =1
\end{array}
\right .
\end{align}
\noindent where $(v_0, \cdots ,v_n, q)\in (\cit^*)^{n+2}$ and $J$ is a set of indices such that $\sum_{j\in J}w_j =d$.
The variable $q$ is the quantization variable.

\begin{proposition}
\label{prop:HVLaurent}
Under the assumptions of theorem \ref{theo:hyperlisse}, 
one may assume that       
\begin{equation}\nonumber
w_0 =1\ \mbox{and}\  d=w_{r+1}+\cdots +w_n 
\end{equation}
for some $r\in\{0,\cdots ,n-2\}$. In these conditions, and up to the ramification $q=Q^{w_n}$,
the GHV model of $H$ takes the form
\begin{equation}\label{eq:PolLaurBis}
f(u_1 ,\cdots ,u_{n-1},Q)=u_1 +\cdots + u_r +\frac{(u_{r+1}+\cdots +u_{n-1}+Q)^d}{u_1^{w_1}\cdots u_{n-1}^{w_{n-1}}}
\end{equation}
for $(u_1, \cdots , u_{n-1}, Q)\in (\cit^*)^{n}$.
\end{proposition}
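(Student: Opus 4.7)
The plan has two main parts. First, I would deduce from the arithmetic hypotheses of Theorem~\ref{theo:hyperlisse} combined with the Fano assumption $d<\sum_{i}w_{i}$ that $w_{0}=1$, and then arrange the indices so that $J=\{r+1,\ldots,n\}$ with $0\leq r\leq n-2$. Second, I would carry out an explicit change of variables on $U$ that, combined with the ramification $q=Q^{w_{n}}$, turns $f$ into the stated Laurent polynomial.

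For the arithmetic part, pairwise coprimality of the $w_{i}$ together with $w_{i}\mid d$ gives $\prod_{i=0}^{n}w_{i}\mid d$, hence $\prod_{i}w_{i}\leq d$, while the Fano condition reads $d<\sum_{i}w_{i}$. Combining the two gives $\prod_{i}w_{i}<\sum_{i}w_{i}$. On the other hand, if all $w_{i}\geq 2$ then $\prod_{i=0}^{n}w_{i}\geq 2^{n}w_{n}$ and $\sum_{i}w_{i}\leq (n+1)w_{n}$, so the elementary inequality $2^{n-1}\geq n+1$ for $n\geq 3$ yields $\prod w_{i}\geq 2\sum w_{i}$, contradicting $\prod w_{i}<\sum w_{i}$. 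Therefore $w_{0}=1$. Since $v_{0}$ is to be eliminated via the multiplicative constraint, I would choose $J\subset\{1,\ldots,n\}$ and then permute labels within $\{1,\ldots,n\}$ to write $J=\{r+1,\ldots,n\}$; the bound $r\leq n-2$ is equivalent to $|J|\geq 2$, which follows from $w_{j}<d$ for every $j\in J$.

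For the change of variables, I would solve the multiplicative constraint for $v_{0}=q/(v_{1}^{w_{1}}\cdots v_{n}^{w_{n}})$ (possible because $w_{0}=1$) and introduce new coordinates $(u_{1},\ldots,u_{n-1},Q)\in(\cit^{*})^{n}$ by
\begin{equation*}
v_{j}=u_{j}\ (1\leq j\leq r),\qquad v_{j}=u_{j}/S\ (r+1\leq j\leq n-1),\qquad v_{n}=Q/S,
\end{equation*}
where $S:=u_{r+1}+\cdots+u_{n-1}+Q$. The additive constraint $\sum_{j\in J}v_{j}=S/S=1$ is then automatic. Using $w_{r+1}+\cdots+w_{n}=d$, a direct computation gives $v_{1}^{w_{1}}\cdots v_{n}^{w_{n}}=u_{1}^{w_{1}}\cdots u_{n-1}^{w_{n-1}}Q^{w_{n}}/S^{d}$, so the ramification $q=Q^{w_{n}}$ cancels the $Q^{w_{n}}$ factor and produces $v_{0}=S^{d}/(u_{1}^{w_{1}}\cdots u_{n-1}^{w_{n-1}})$. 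Summing $v_{0}+v_{1}+\cdots+v_{n}$ then yields the claimed expression for $f$, up to an inessential additive constant.

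The main conceptual obstacle is guessing the projective rescaling $v_{j}=u_{j}/S$, $v_{n}=Q/S$ for $j\in J$: it is exactly this substitution that turns the linear constraint into a tautology and brings the factor $S^{d}=(u_{r+1}+\cdots+u_{n-1}+Q)^{d}$ into the numerator; the ramification $q=Q^{w_{n}}$ then appears naturally as the change of parameter that removes the leftover $Q^{w_{n}}$.
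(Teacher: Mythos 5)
Your route is genuinely different from the paper's: the paper simply invokes \cite[Theorem 9]{P} to obtain the intermediate form $f=x_1+\cdots+x_r+1+q(x_{r+1}+\cdots+x_{n-1}+1)^d/(x_1^{w_1}\cdots x_{n-1}^{w_{n-1}})$ and then performs the monomial rescaling $u_i=q^{1/w_n}x_i$, whereas you rederive everything from the definition (\ref{eq:ModHV}). Your change of variables (eliminating $v_0$ via the multiplicative constraint, then the projective rescaling $v_j=u_j/S$, $v_n=Q/S$ with $S=u_{r+1}+\cdots+u_{n-1}+Q$) is correct and reproduces the same Laurent polynomial; the only caveat, shared with the paper's own treatment via \cite{P}, is that this identification is onto the open locus $S\neq 0$ of the torus, so the statement ``the GHV model takes the form (\ref{eq:PolLaurBis}) on $(\cit^*)^n$'' is really a birational normalization. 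Your derivation of $w_0=1$ from $\prod_i w_i\mid d<\sum_i w_i$ is also correct for $n\geq 3$.

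The genuine gap is in the sentence ``I would choose $J\subset\{1,\ldots,n\}$.'' The existence of a subset $J$ of indices with $\sum_{j\in J}w_j=d$ that moreover \emph{avoids an index of weight $1$} is precisely the nontrivial combinatorial content of the proposition (it is what allows both $w_0=1$ and $0\notin J$ simultaneously), and you assert it without proof. Knowing only that \emph{one} weight equals $1$ does not suffice: a priori every subset summing to $d$ could contain that index. What the paper gets from \cite[Proposition 7]{P} (see the proof of lemma \ref{lemma:wiPR}) is the stronger count that at least $w-d+1\geq 2$ weights equal $1$; with that in hand one can always modify a given $J$ to avoid a weight-$1$ index (if $J$ contains all of them, trade $w_{j_0}$ of them for some $j_0\notin J$, which is possible since the number of weight-$1$ indices exceeds $w_{j_0}$). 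Without this step, or an equivalent argument, the reduction to the normal form $w_0=1$, $d=w_{r+1}+\cdots+w_n$ is not established. The remaining small claims ($|J|\geq 2$ from $w_j<d$, hence $r\leq n-2$) are fine.
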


\begin{proof} By \cite[Theorem 9]{P}, and under the assumptions of theorem \ref{theo:hyperlisse},
one may assume that       
$w_0 =1$ and  $d=w_{r+1}+\cdots +w_n $ and that the GHV model of $H$ takes the form, if $r\geq 1$,
\begin{equation}\label{eq:PolLaur}
f(x_1 ,\cdots ,x_{n-1})=x_1 +\cdots + x_r +1+q\frac{(x_{r+1}+\cdots +x_{n-1}+1)^d}{x_1^{w_1}\cdots x_{n-1}^{w_{n-1}}}
\end{equation}
where $(x_1 ,\cdots ,x_{n-1}, q)\in (\cit^*)^{n}$.
Removing the harmless additive constant $1$, we get (\ref{eq:PolLaurBis}) if we put $u_{i}=q^{1/w_{n}}x_{i}$
for $i=r+1,\cdots , n-1$ and $u_{i}=x_{i}$ for $i=1,\cdots, r$. 
 The formula is obviously adapted for $r=0$. 
\end{proof}

\noindent Notice
the following homogeneity relation
\begin{equation}\label{eq:fHom}
f=\frac{w -d}{w_n }\ Q\frac{\partial f}{\partial Q}+ \sum_{i=1}^{r}u_{i}\frac{\partial f}{\partial u_{i}}
+\frac{w -d}{w_{n}}\sum_{i=r+1}^{n-1}u_{i}\frac{\partial f}{\partial u_{i}}
\end{equation} 
The function $f$ is homogeneous of degree $1$ if     
\begin{equation}\label{eq:degui}
u_1 ,\cdots ,u_r\ \mbox{have degree}\ 1
\end{equation}
and 
\begin{equation}\label{eq:degq}
u_{r+1} ,\cdots ,u_{n-1}\ \mbox{et}\ Q\ \mbox{have degree}\ \frac{w -d}{w_n }.
\end{equation}

\begin{lemma}\label{lemma:wiPR} 
Under the assumptions of theorem \ref{theo:hyperlisse}, we have
$n+d>w$.
\end{lemma}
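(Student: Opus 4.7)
The plan is to combine a purely arithmetic consequence of the pairwise coprimality of the weights with an elementary inequality relating the sum and the product of positive integers.

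First I would use conditions (1) and (2) of theorem \ref{theo:hyperlisse}: since the $w_i$ are pairwise coprime and each divides $d$, their product divides $d$ as well, so
\begin{equation*}
\prod_{i=0}^n w_i \leq d.
\end{equation*}

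Next I would invoke the elementary inequality
\begin{equation*}
\prod_{i=0}^n w_i \geq 1 + \sum_{i=0}^n (w_i-1) = w - n,
\end{equation*}
valid for any positive integers $w_0,\ldots,w_n$: it is immediate from expanding $\prod_i (1+(w_i-1))$ and keeping only the constant and the linear terms. Equality occurs if and only if at most one of the $w_i$ exceeds $1$, since otherwise a strictly positive cross term $(w_i-1)(w_j-1)$ survives.

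Combining these two estimates yields $d \geq w - n$, i.e.\ $n+d \geq w$. To upgrade this to a strict inequality I would analyse the equality case, which is precisely where assumption (3) enters. Equality $n+d = w$ would force both $d = \prod_i w_i$ and at most one weight $>1$. If all $w_i = 1$ then $d = 1$, contradicting $w_n < d$; if exactly one weight is $>1$, it must be $w_n$ by the ordering $w_0 \leq \cdots \leq w_n$, and then $d = w_n$, again contradicting $w_n < d$. Hence $n+d > w$.

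The argument is essentially a two-line calculation, so there is no real obstacle; the only point of delicacy is ruling out equality, and this is exactly where the third hypothesis of theorem \ref{theo:hyperlisse} is used in order to turn the weak inequality into a strict one.
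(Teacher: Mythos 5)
Your proof is correct, and it takes a genuinely different route from the paper. The paper invokes \cite[Proposition 7]{P}, which asserts that under the hypotheses of theorem \ref{theo:hyperlisse} at least $w-d+1$ of the weights are equal to $1$; since there are only $n+1$ weights this gives $n+d\geq w$, and strictness is then extracted from the decomposition $d=w_{r+1}+\cdots +w_n$ of proposition \ref{prop:HVLaurent} together with $r\leq n-2$. You instead argue entirely from conditions (1)--(3): pairwise coprimality plus $w_i\mid d$ gives $\prod_i w_i\mid d$, hence $\prod_i w_i\leq d$, and the elementary bound $\prod_i w_i\geq 1+\sum_i(w_i-1)=w-n$ yields $n+d\geq w$, with the equality case killed by condition (3) exactly as you say (both the all-ones case and the single-large-weight case force $d=w_n$ or $d=1$, contradicting $w_i<d$). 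Your argument is self-contained and does not rely on the external structural result about unit weights or on the normalization coming from \cite{P}; what the paper's approach buys in exchange is the sharper combinatorial information (the count $w-d+1$ of weights equal to $1$ and its identification with $1+w_0+\cdots+w_r$), which the author reuses elsewhere when writing the GHV model as a Laurent polynomial. One small caveat: your first step reads the hypothesis $\gcd(w_i,w_j)=1$ as ranging over $i\neq j$, which is clearly the intended meaning but worth stating.
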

\begin{proof} 
By \cite[Proposition 7]{P} and under the assumptions of theorem \ref{theo:hyperlisse}, there are at least
$w -d +1$ weights $w_{i}$ equal to $1$. In particular $w-d+1\leq n+1$ and we get $n+d\geq w$. Because $w-d+1=1+w_0 +\cdots +w_r$ we also deduce that there are at least $r+2$ weights equal to $1$: therefore, if $n+d=w$ we have $n=w_0 +\cdots +w_r=r+1$ and this is not possible because $r\leq n-2$. This gives the assertion.
\end{proof}

\noindent It follows that $n+d-w>0$ and we will see in sections \ref{sec:Conjectures} and \ref{sec:BirHV} that this number is a potential number of vanishing cycles at infinity.\\

Last, the two next results follow from straightforward computations: we fix $Q\in\cit^*$ and we  denote by $f^{o}$ the Laurent polynomial (\ref{eq:PolLaurBis}) and by $Q_{f}^{o}$ its Jacobian ring.

\begin{lemma}\label{lemma:Critf} 
\begin{enumerate}
\item The Laurent polynomial $f^{o}$ has $w -d$ isolated
critical points on $(\cit^* )^{n-1}$,
\begin{equation}\nonumber
 c_{k}=(b_1 \varepsilon^{k},\cdots ,b_r \varepsilon^{k}, \frac{w_{r+1}}{w_{n}}Q ,\cdots ,\frac{w_{n-1}}{w_{n}}Q )
\end{equation}
for $k=0,\cdots , w -d-1$, where $\varepsilon$ denotes a $w -d$-th primitive root of the unity and 
$b_{i}=w_{i}(\frac{d^d}{w_{1}^{w_{1}}\cdots w_{n}^{w_{n}}}Q^{w_n})^{1/(w -d)}$ for $i=1,\cdots ,r$. 
\item These critical points are nondegenerate and 
the corresponding critical values are
\begin{equation}\nonumber
f^{o}(c_{k})=(w-d)(Q^{w_n} \frac{d^d}{w_{1}^{w_{1}}\cdots w_{n}^{w_{n}}})^{1/(w -d)}\varepsilon^{k}
\end{equation}
for $k=0,\cdots , w -d-1$.
\end{enumerate}\qed
\end{lemma}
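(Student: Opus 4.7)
The plan is to compute the gradient of $f^o$ explicitly, solve the critical point equations in two stages (first for the indices $i>r$, then for $i\leq r$), and finally verify nondegeneracy by a block-diagonal Hessian computation.

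First I would introduce the shorthands $N:=u_{r+1}+\cdots+u_{n-1}+Q$ and $M:=u_1^{w_1}\cdots u_{n-1}^{w_{n-1}}$, so that $f^{o}=u_1+\cdots+u_r+N^d/M$. A direct differentiation yields
\[
\frac{\partial f^{o}}{\partial u_i}=1-\frac{w_i}{u_i}\cdot\frac{N^d}{M}\quad(1\leq i\leq r),\qquad
\frac{\partial f^{o}}{\partial u_i}=\frac{N^{d-1}}{M}\Bigl(d-\frac{w_iN}{u_i}\Bigr)\quad(r+1\leq i\leq n-1).
\]
On $(\cit^{*})^{n-1}$ (so $M\neq 0$) the second batch of equations forces $u_i=(w_i/d)N$ for $i=r+1,\dots,n-1$. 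Summing and using $w_{r+1}+\cdots+w_{n-1}=d-w_n$ gives $N=(d/w_n)Q$, hence $u_i=(w_i/w_n)Q$, which is exactly the $Q$-coordinates in the statement.

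Next, the first batch forces $u_i/w_i$ to equal a common value $t:=N^d/M$ for $i=1,\dots,r$. Substituting $u_i=w_it$ into $M$ and using $w_0=1$, together with $w_1+\cdots+w_r=w-d-1$, one computes
\[
M=t^{w-d-1}\,\frac{\prod_{i=0}^{n}w_i^{w_i}}{w_n^{d}}\,Q^{d-w_n},
\]
while $N^d=(d^d/w_n^d)Q^d$. The equation $t=N^d/M$ then collapses to
\[
t^{w-d}=\frac{d^d\,Q^{w_n}}{\prod_{i=0}^{n}w_i^{w_i}},
\]
whose $w-d$ roots $t=\varepsilon^{k}(d^dQ^{w_n}/\prod w_i^{w_i})^{1/(w-d)}$ give the asserted critical points $c_k$. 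The critical value is obtained immediately from $f^{o}(c_k)=\sum_{i=1}^{r}u_i+t=t(w_1+\cdots+w_r+1)=t(w-d)$.

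For nondegeneracy I would compute the Hessian block by block at a critical point. The mixed second partials $\partial^2(N^d/M)/\partial u_i\partial u_j$ with $i\leq r<j$ vanish because $\partial(N^d/M)/\partial u_j=(N^d/M)(d/N-w_j/u_j)$ is zero at the critical point. This reduces the Hessian to a block-diagonal form with
\[
A=\tfrac{1}{t}\bigl(\mathbf{1}\mathbf{1}^{T}+\diag(1/w_1,\dots,1/w_r)\bigr),\qquad
C=\tfrac{dt}{N^{2}}\bigl(\diag(d/w_{r+1},\dots,d/w_{n-1})-\mathbf{1}\mathbf{1}^{T}\bigr),
\]
(after a straightforward computation of the diagonal and off-diagonal entries). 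Applying the matrix determinant lemma to each block gives $\det A$ proportional to $1+\sum_{i=1}^{r}w_i=w-d\neq 0$ and $\det C$ proportional to $1-\sum_{i=r+1}^{n-1}w_i/d=w_n/d\neq 0$. Hence the Hessian is invertible and each $c_k$ is nondegenerate. The main bookkeeping obstacle is the Hessian step, not anything conceptual; the identities $w_1+\cdots+w_r=w-d-1$ and $w_{r+1}+\cdots+w_{n-1}=d-w_n$ must be used repeatedly to keep the constants clean.
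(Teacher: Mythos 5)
Your computation is the ``straightforward computation'' the paper invokes: Lemma \ref{lemma:Critf} is stated with no written proof, and your gradient formulas, the reduction to $u_i=(w_i/w_n)Q$ for $i>r$ and $u_i=w_it$ with $t^{w-d}=d^dQ^{w_n}/\prod_{i=0}^{n}w_i^{w_i}$ for $i\leq r$, the critical value $t(w-d)$, and the block-diagonal Hessian with determinants proportional to $w-d$ and $w_n/d$ are all correct (I checked the constants, including the use of $w_0=1$ and $\sum_{i=1}^{r}w_i=w-d-1$).

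There is one logical gap: the step ``the second batch of equations forces $u_i=(w_i/d)N$'' silently divides by $N^{d-1}$, which is not a unit on the torus. For $r\geq 1$ this is easily repaired by treating the first batch \emph{before} the second: the equations $1=w_iN^d/(u_iM)$ for $i\leq r$ already force $N\neq 0$ at any critical point, after which the division is legitimate; you should say this explicitly. For $r=0$ the gap is real: when $d\geq 2$ every point of the hypersurface $\{N=0\}\cap(\cit^*)^{n-1}$ is critical (e.g.\ for the cubic surface in $\ppit^3$, $f^{o}=(u_1+u_2+Q)^3/(u_1u_2)$ has the critical curve $u_1+u_2+Q=0$ in addition to the single nondegenerate point $u_1=u_2=Q$), so your argument does not enumerate all critical points and the assertion that the critical points are exactly the $c_k$ fails. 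This non-isolated locus is arguably a defect of the lemma as stated rather than of your computation, but as written your proof claims to have solved the critical equations completely, and for $r=0$ it has not; at minimum restrict the uniqueness claim to $r\geq 1$ or add the hypothesis $N\neq 0$.
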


\begin{corollary}\label{coro:VPf}
\begin{enumerate}
\item The eigenvalues of the multiplication by $f^{o}$ on $Q_{f}^{o}$ are pairwise distinct.
\item The classes of $1,f^{o},\cdots , (f^{o})^{w-d-1}$ provide a basis of $Q_{f}^{o}$.
\item $(f^{o})^{w-d}=(w-d)^{w-d}Q^{w_n} \frac{d^{d}}{w_{1}^{w_{1}}\cdots w_{n}^{w_{n}}}$.
\end{enumerate}\qed
\end{corollary}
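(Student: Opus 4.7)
The plan is to exploit the explicit description of the critical points and critical values provided by Lemma \ref{lemma:Critf}, together with the standard structure theorem for the Jacobian ring at nondegenerate critical points. The first step will be to identify $Q_{f}^{o}$ with a product of copies of $\cit$: since $f^{o}$ has exactly $w-d$ isolated \emph{nondegenerate} critical points $c_{0},\ldots,c_{w-d-1}$ on $(\cit^{*})^{n-1}$, the finite-dimensional algebra $Q_{f}^{o}$ decomposes as a product of local rings at these critical points, and each local ring equals $\cit$ because the Hessian is invertible and hence the partials already generate the maximal ideal at each $c_{k}$. Thus there is a canonical isomorphism $Q_{f}^{o}\cong\prod_{k=0}^{w-d-1}\cit$ given by evaluation at the $c_{k}$, under which the class of any regular function $g$ corresponds to the tuple $(g(c_{k}))_{k}$. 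In particular $\dim_{\cit}Q_{f}^{o}=w-d$ and the eigenvalues of multiplication by $f^{o}$ on $Q_{f}^{o}$ are exactly the critical values $f^{o}(c_{k})$.

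Part (1) then follows immediately from Lemma \ref{lemma:Critf}(2): the critical values $(w-d)(Q^{w_{n}}d^{d}/(w_{1}^{w_{1}}\cdots w_{n}^{w_{n}}))^{1/(w-d)}\varepsilon^{k}$ for $k=0,\ldots,w-d-1$ are pairwise distinct, since the factor $\varepsilon^{k}$ runs through the $w-d$ distinct $(w-d)$-th roots of unity. For part (3), I raise each critical value to the power $w-d$; because $\varepsilon^{k(w-d)}=1$ for every $k$, each of them equals the single constant $C:=(w-d)^{w-d}Q^{w_{n}}d^{d}/(w_{1}^{w_{1}}\cdots w_{n}^{w_{n}})$. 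Under the product decomposition above, the classes of $(f^{o})^{w-d}$ and of $C$ therefore have identical tuples of evaluations at the critical points, which forces the equality $(f^{o})^{w-d}=C$ in $Q_{f}^{o}$.

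Finally, part (2) is a formal consequence of (1) and (3). The polynomial $T^{w-d}-C$ annihilates the class of $f^{o}$ by (3), and by (1) its $w-d$ roots are exactly the distinct eigenvalues of multiplication by $f^{o}$; hence $T^{w-d}-C$ is the minimal polynomial. It follows that $1,f^{o},\ldots,(f^{o})^{w-d-1}$ are linearly independent in $Q_{f}^{o}$, and since $\dim_{\cit}Q_{f}^{o}=w-d$ they must form a basis. The only even mildly delicate step in the whole argument is the initial splitting $Q_{f}^{o}\cong\prod\cit$, which relies on the nondegeneracy already verified in Lemma \ref{lemma:Critf}(2); once this structural fact is in hand the three assertions are essentially read off the formulas of that lemma.
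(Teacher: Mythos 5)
Your proof is correct and takes the route the paper intends: the paper simply asserts that the corollary "follows from straightforward computations" based on Lemma \ref{lemma:Critf}, and your argument via the evaluation isomorphism $Q_{f}^{o}\cong\prod_{k}\cit$ at the $w-d$ nondegenerate critical points is exactly the standard way to carry out those computations, with all three assertions then read off the explicit critical values.
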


\subsection{Singular points at infinity of GHV models}
\label{sec:Conjectures}

 How to compute the rank of the connection $G$ associated with a GHV model $f$? The hope is to use theorem
\ref{theo:rangMG}: the main question is to decide whether $f$ has
at most isolated singularities including at infinity for a suitable compactification or not. This problem is not so easy in general\footnote{There exist different theoretic classes of functions having isolated singularities including at infinity in some sense, see for instance 
the book \cite{T1} and the references therein. But in general it is not clear how to decide if a given function belongs to a class or to another.}.
Let us begin with the following examples:

\begin{example}
\label{sec:Q2P3}
We keep the notations of remark \ref{rem:CompStandardLaurent}.
\begin{enumerate}
\item Let us consider the GHV model\footnote{We fix here $q=1$.} of a smooth hypersuface of degree $2$ in $\ppit^3$:
\begin{equation}\nonumber
f(x,y)=x+\frac{(y+1)^2}{xy}
\end{equation}
The equation $F(X_0 , X_1 , X_2 ,t)=0$ is
$X_1^{2}X_2 -tX_0 X_1 X_2 + X_0 (X_2 +X_0 )^{2}=0$
and $\yit_{0}$ has an isolated singular point at $P\times \{0\}$ where $P=(1:0:-1)$ is on the polar locus at finite distance. 
In order to compute the number of vanishing cycles $\nu_{P, 0}$, we use proposition \ref{prop:nu}: the hypersurface 
$$u^2 v-tuv+(1+v)^{2}=0$$
is smooth for small $t\neq 0$ but the Milnor number at $P$ for $t=0$ is $\mu_{P,0}=1$. Thus $\nu_{P,0}=\mu_{P,0}=1$. 
The value $t=0$ is atypical. By theorem \ref{theo:rangMG}, the rank of $G$ is $w-d+\nu =2+1=3$. 
\item Let us now consider the GHV model of a smooth hypersurface of degree $2$ in $\ppit^4$:
\begin{equation}\nonumber
f(x,y,z)=x+y+\frac{(z+1)^2}{xyz}
\end{equation}
Equation $F(X_0 ,X_1 ,X_{2}, X_{3}, t)=0$ takes the form
$$X_1^{2}X_2 X_3+X_1 X_2^2 X_3 +X_{0}^{2}(X_{3}+X_{0})^2-tX_0 X_1 X_2 X_3 =0$$
and $\yit_{0}$ has an isolated singular point at $P\times \{0\}$ where $P=(1:0:0:-1)$ is on the polar locus at finite distance. 
It follows from proposition \ref{prop:nu} that $\nu_{P,0}=4-3=1$:
$P$ is thus a singular point at infinity,
the value $t=0$ 
is atypical and the number of vanishing cycles at infinity is $1$ (the points on the hyperplane at infinity do not produce vanishing cycles). 
By theorem \ref{theo:rangMG},
the rank of $G$ is therefore $4$.          
\end{enumerate}
\end{example}

\begin{example}
\label{ex:MalBr}
The GHV model of a degree $d\geq 2$ Fano hypersurface in $\ppit^n$ is
\begin{equation}\nonumber
f(u_1 ,\cdots ,u_{n-1})=u_1 +\cdots + u_{n-d} +\frac{(u_{n-d+1}+\cdots +u_{n-1}+q)^d}{u_1 \cdots u_{n-1}}
\end{equation}
Recall the set $T_{\infty}(f)$ defined in remark \ref{rem:NonCrit}.
We have $T_{\infty}(f)=\{0\}$:
indeed, let us define the sequence $(u_{p})=((u_1^p,\cdots ,u_{n-1}^p))$ by
$$u_{1}^{p}=\cdots =u_{n-d}^{p}=\frac{1}{p}\  \mbox{and}\  u_{n-d+1}^p =\cdots =u_{n-1}^p=\frac{1}{p^{n-d+1}}-\frac{q}{d-1}$$
Then  
$$u_{p}\rightarrow (0,\cdots ,0,-\frac{q}{d-1},\cdots ,-\frac{q}{d-1}),\ u_{p}\grad f(u_{p})\rightarrow 0\ \mbox{et}\ f(u_{p})\rightarrow 0$$
thus $\{0\}\subset T_{\infty}(f)$ and there are no other candidates in $T_{\infty}(f)$. This suggests that $0$ is an atypical value and that
$P=(0,\cdots ,0,-\frac{q}{d-1},\cdots ,-\frac{q}{d-1})$ is a singularity at infinity.
\end{example}

We thus propose the following conjecture (recall that $n+d-w >0$, see lemma \ref{lemma:wiPR}):           

\begin{conjecture}
\label{conj:SingInftyHV}
Under the assumptions of theorem \ref{theo:hyperlisse}, there exists a compactification for which GHV models have only one 
singular point $P$ at infinity, located on the polar locus at finite distance, for which
$\nu =\nu_{P,0} =n+d-w$.
\end{conjecture}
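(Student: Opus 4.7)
The plan is to use the projective compactification by the graph from Remark \ref{rem:CompStandardLaurent} and then locate and analyze a unique singular point $P$ at infinity lying on the polar locus at finite distance, following the pattern of Example \ref{sec:Q2P3} and the heuristic in Example \ref{ex:MalBr}. Writing $f=\tilde N/\tilde D$ with $\tilde D=u_1^{w_1}\cdots u_{n-1}^{w_{n-1}}$ monomial, Lemma \ref{lemma:wiPR} guarantees $\deg\tilde N\geq\deg\tilde D$, so the construction of Remark \ref{rem:CompStandardLaurent} applies. The resulting family of fibers $F_t=0$ in $\ppit^n$ must be studied on two strata: the polar locus at finite distance $\bigcup_i\{u_i=0\}$ and the hyperplane at infinity $\{X_0=0\}$.

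Guided by Example \ref{ex:MalBr}, I would take $P$ to be the point on the polar locus whose affine coordinates satisfy $u_1=\cdots=u_r=0$ together with values of $u_{r+1},\ldots,u_{n-1},Q$ realizing $u_{r+1}+\cdots+u_{n-1}+Q=0$, a relation which makes both numerator and denominator of $f$ vanish at $P$. The crucial geometric claim to establish is that, near $P$, the singular locus $\yit_{\mathrm{sing}}$ is (locally) the line $\{P\}\times\cit$, so that Corollary \ref{coro:nu} applies and reduces the conjecture to the identity $\mu_{P,0}-\mu_{P,\mathrm{gen}}=n+d-w$.

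To verify this identity, I would pass to local coordinates at $P$: keep $z_i=u_i$ for $i\le r$, introduce a new coordinate $w=u_{r+1}+\cdots+u_{n-1}+Q$ replacing $Q$, and use shifted versions of $u_{r+1},\ldots,u_{n-1}$. After clearing denominators, the local equation of $F_t=0$ should become a weighted-homogeneous polynomial whose weights encode the scaling relations (\ref{eq:fHom}), (\ref{eq:degui}), (\ref{eq:degq}); the Milnor--Orlik formula then gives $\mu_{P,0}$ and $\mu_{P,\mathrm{gen}}$ explicitly, and the telescoping should yield $n+d-w$. In parallel, one must verify that no vanishing cycles are produced either elsewhere on the polar locus at finite distance or on $\{X_0=0\}$, the latter being consistent with Example \ref{sec:Q2P3}(2).

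The main obstacle will be extracting the precise weighted-homogeneous structure at $P$ in the general case: the local equation mixes monomials $u_i^{w_i}$ of widely varying degrees, and tracking the leading terms across all admissible weight systems of Theorem \ref{theo:hyperlisse} is delicate. An alternative route bypassing local Milnor computations is to argue globally: by Lemma \ref{lemma:Critf} and Corollary \ref{coro:VPf}, $f^o$ has $\mu=w-d$ nondegenerate critical points with pairwise distinct nonzero critical values, so $B(f)\subseteq\{f^o(c_k)\}\cup B_\infty(f)$. If one can first prove $B_\infty(f)=\{0\}$ (for instance by analyzing $T_\infty(f)$ in the spirit of Example \ref{ex:MalBr} and invoking Remark \ref{rem:NonCrit}) and then compute $\chi(f^{-1}(s))-\chi(f^{-1}(0))$ directly from the defining equations (\ref{eq:ModHV}) of $U$, formula (\ref{eq:FibreSpeFibreGen}) will immediately yield the total $\nu_0=n+d-w$, with the uniqueness of $P$ emerging from the same $T_\infty$-analysis.
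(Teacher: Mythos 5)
The statement you are trying to prove is stated in the paper as a \emph{conjecture}: the author offers no proof of it, only verifications in low-dimensional examples (example \ref{sec:Q2P3}, where the quadrics in $\ppit^3$ and $\ppit^4$ are treated by hand) and a heuristic via the set $T_{\infty}(f)$ in example \ref{ex:MalBr}. Your text is likewise a plan rather than a proof, and it leaves the genuinely hard steps unestablished. Concretely: (i) you assert that near $P$ the singular locus $\yit_{\mathrm{sing}}$ is the line $\{P\}\times\cit$ so that corollary \ref{coro:nu} applies, but this is exactly what must be proved, and the paper's own examples show the structure varies (in example \ref{sec:Q2P3} the generic nearby fiber is \emph{smooth} at $P$ and proposition \ref{prop:nu} is used with an empty sum, not corollary \ref{coro:nu}); (ii) the claim that after clearing denominators the local equation at $P$ is weighted-homogeneous so that Milnor--Orlik ``should yield $n+d-w$'' is not substantiated --- the homogeneity (\ref{eq:fHom}) is a global quasi-homogeneity of $f$ in $(u,Q)$, not a local normal form of the compactified fiber at $P$, and no computation of $\mu_{P,0}$ or $\mu_{P,\mathrm{gen}}$ is actually performed; (iii) before any of the formulas of section \ref{sec:CompStandard} can be invoked one must first show that $f$ has isolated singularities including at infinity \emph{for this compactification}, in particular that the fibers $\yit_a$ do not acquire non-isolated singularities along the hyperplane $\{X_0=0\}$; this is the obstruction the author explicitly flags (``This problem is not so easy in general'') and it is not addressed.

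Your alternative global route has the same status: it rests on $B_{\infty}(f)=T_{\infty}(f)$, which the paper itself presents only as an expectation (remark \ref{rem:NonCrit}), and on a direct computation of $\chi(f^{-1}(s))-\chi(f^{-1}(0))$ from the equations (\ref{eq:ModHV}) that you do not carry out. Both routes are sensible directions of attack, and the second is arguably closer to how one would try to settle the conjecture uniformly in the weights; but as written neither closes the argument, and you should present this as a strategy for the conjecture, not as a proof.
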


\begin{corollary} 
\label{coro:RangG}
Under the assumptions of theorem \ref{theo:hyperlisse},
the rank of $G$ is equal to $n$.
\end{corollary}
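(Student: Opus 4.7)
The plan is to combine the rank formula of Theorem \ref{theo:rangMG} with the Milnor number computation of Lemma \ref{lemma:Critf} and the conjectural count of vanishing cycles at infinity from Conjecture \ref{conj:SingInftyHV}. Since the corollary is stated as a consequence of the conjecture, the argument should be short and essentially arithmetical.

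First I would verify that the conjecture puts us in the hypotheses of Theorem \ref{theo:rangMG}: the GHV model $f$ has only isolated critical points on $U=(\cit^{*})^{n-1}$ (this is part 1 of Lemma \ref{lemma:Critf}), and assuming Conjecture \ref{conj:SingInftyHV}, there is a compactification for which the only contribution from $X-U$ is the single isolated singular point $P$ on the polar locus at finite distance. Hence $f$ has at most isolated singularities including at infinity, so Theorem \ref{theo:rangMG} applies and gives
\begin{equation*}
\Rang G = \mu + \nu.
\end{equation*}

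Next I would collect the two ingredients. By Lemma \ref{lemma:Critf}, $f$ has exactly $w-d$ nondegenerate critical points on $(\cit^{*})^{n-1}$, so the global Milnor number is $\mu = w-d$. By Conjecture \ref{conj:SingInftyHV}, the number of vanishing cycles at infinity is $\nu = \nu_{P,0} = n+d-w$ (this is positive by Lemma \ref{lemma:wiPR}).

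Substituting these values gives
\begin{equation*}
\Rang G = (w-d) + (n+d-w) = n,
\end{equation*}
which is the desired equality. The only real content in the corollary is thus Conjecture \ref{conj:SingInftyHV}; once that is granted, the rank computation is a one-line cancellation. The genuine obstacle (not asked for here but worth noting) lies entirely in proving the conjecture, that is, in showing both the isolatedness and the precise count $\nu_{P,0}=n+d-w$ at the point on the polar locus; examples such as \ref{sec:Q2P3} and \ref{ex:MalBr} suggest one could use Proposition \ref{prop:nu} (or Corollary \ref{coro:nu}) to read off $\nu_{P,0}$ from the Milnor numbers of the homogenized hypersurface at generic versus special parameter, but executing this uniformly in the weighted setting of Theorem \ref{theo:hyperlisse} is the substantive step.
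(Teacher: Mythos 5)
Your proof is correct and follows exactly the paper's route: the author's own proof is the one-line citation of Theorem \ref{theo:rangMG}, Lemma \ref{lemma:Critf} and Conjecture \ref{conj:SingInftyHV}, and your computation $\Rang G=\mu+\nu=(w-d)+(n+d-w)=n$ is precisely the intended cancellation. You also correctly identify that the whole content rests on the conjecture, which the corollary assumes.
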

\begin{proof} Follows from theorem \ref{theo:rangMG} and lemma \ref{lemma:Critf} 
and conjecture \ref{conj:SingInftyHV}.
\end{proof}

\section{Application to mirror symmetry for smooth hypersurfaces in projective spaces. The case of the quadrics}
\label{sec:BirHV}

We explain in this section why the GHV models 
should be mirror partners of smooth hypersurfaces in weighted projective spaces.
The general setting is described in section \ref{sec:ProbBir} and we apply it to quadrics in $\ppit^n$ in section \ref{sec:QuadPnB}.
We work under the assumptions of theorem \ref{theo:hyperlisse}.

\subsection{Mirror symmetry  and the Birkhoff problem}

\label{sec:ProbBir}

Let $H$ be a smooth Fano degree $d$ hypersurface in the weighted projective space $\ppit (1,w_{1},\cdots ,w_{n})$. 
Its GHV model is 
\begin{equation}\nonumber
f(u_1 ,\cdots ,u_{n-1}, Q)=u_1 +\cdots + u_r +\frac{(u_{r+1}+\cdots +u_{n-1}+Q)^d}{u_1^{w_1}\cdots u_{n-1}^{w_{n-1}}}
\end{equation}
see section \ref{sec:ModelesHV}. We will use freely the notation $q=Q^{w_n}$. 
The connection $G$ is defined as in section 
$\ref{sec:F(GM)}$: it is a free $\cit [\theta , \theta^{-1}, Q, Q^{-1}]$-module equipped with a connection $\nabla$ defined by 
\begin{equation}\nonumber
\theta^2 \nabla_{\partial_{\theta}}  [\sum_{i}\omega_{i}\theta^{i}]=[\sum_{i}f\omega_{i}\theta^{i}]-
[\sum_{i}i\omega_{i}\theta^{i+1}]
\end{equation}
and
\begin{equation}\nonumber
\theta  \nabla_{Q\partial_{Q}} [\sum_{i}\omega_{i}\theta^{i}]=[\sum_{i}Q\partial_Q (\omega_{i})\theta^{i+1}]
-[\sum_{i}Q\frac{\partial f}{\partial Q}\omega_{i}\theta^{i}]
\end{equation}

\noindent where the $\omega_i$'s are differential forms on $(\cit ^*)^{n-1}\times \cit^* $, equipped with coordinates $(u_1 ,\cdots ,u_{n-1}, Q)$,
$Q\partial_Q (\omega_{i})$ denotes the Lie derivative of the differential form $\omega_{i}$ in the direction of $Q\partial_Q$ and $[\ ]$ 
denotes the class in $G$.

The general principle of the mirror symmetry considered in this paper is to associate to the GHV model
$f$ a differential system isomorphic to the one of the small quantum cohomology of the hypersurface $H$, see sections 
\ref{sec:PCQPoids} and \ref{sec:PCQPoids}.
 This can be done solving the following Birkhoff problem:
find a free $\cit [Q,\theta ]$-module $H_0 ^{log}$ of rank $n$ in $G$ and a basis $(\omega_0 ,\cdots , \omega_{n -1})$ of it in which the matrix of 
the flat connection
$\nabla$ takes the form
\begin{equation}\label{eq:Bir}
(\frac{A_0 (Q)}{\theta}+A_1 (Q) )\frac{d\theta}{\theta}+(\frac{\Omega_{0}(Q)}{\theta}+\Omega_{1}(Q))\frac{dQ}{Q}
\end{equation} 
and such that:
\begin{itemize}
\item  $A_1 (Q)$ is semi-simple, 
with eigenvalues $0,1,\cdots ,n-1$ (and, up to a factor $2$, this corresponds to cohomology degrees),
\item $P_H (-\theta\nabla_{q\partial_q}, q, -\theta ) (\omega_0 )=0$
where $P_H$ is the quantum differential operator  defined in section \ref{sec:ODQ}.
\end{itemize}
The rank of $H_0^{log}$ is equal to $n$ because of proposition \ref{prop:rangGKZ}. 
Notice that the expected module $H_0^{log}$ is not the Brieskorn module $G_{0}$, because the latter is not of finite type.

In formula (\ref{eq:Bir}), there is a relation between the matrices $A_0 (Q)$ and $\Omega_{0}(Q)$:
let
$(a_1 ,\cdots , a_{n-1})\in \zit^{n-1}$, 
\begin{equation}\label{eq:defFormePrim}
\omega_0 =\frac{d u_{1}}{u_{1}}\wedge\cdots\wedge \frac{d u_{n-1}}{u_{n-1}}
\end{equation}
and $ [u_{1}^{a_{1}}\cdots u_{n-1}^{a_{n-1}}\omega_0 ]$ be the class of               
$u_{1}^{a_{1}}\cdots u_{n-1}^{a_{n-1}}\omega_0$
in $G$; then 

\begin{lemma}\label{lemma:var}
 We have
\begin{equation}\nonumber
\theta^{2}\nabla_{\partial_{\theta}}[u_{1}^{a_{1}}\cdots u_{n-1}^{a_{n-1}}\omega_{0} ]=
-(w -d)\theta\nabla_{\frac{1}{w_n}Q\partial_Q} [u_{1}^{a_{1}}\cdots u_{n-1}^{a_{n-1}}\omega_{0} ]
\end{equation}
\begin{equation}\nonumber 
+(\sum_{i=1}^{r}a_{i}+\frac{w -d}{w_{n}}\sum_{i=r+1}^{n-1}a_{i})\theta [u_{1}^{a_{1}}\cdots u_{n-1}^{a_{n-1}}\omega_{0}]
\end{equation}
in $G$. 
\end{lemma}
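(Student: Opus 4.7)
The plan is to combine the Euler-type identity (\ref{eq:fHom}) for $f$ with the fundamental $d_f$-exactness relation in $G$. By the definition of $\theta^{2}\nabla_{\partial_{\theta}}$ given in section \ref{sec:F(GM)} we have
\begin{equation*}
\theta^{2}\nabla_{\partial_{\theta}}[u_{1}^{a_{1}}\cdots u_{n-1}^{a_{n-1}}\omega_{0}]=[f\cdot u_{1}^{a_{1}}\cdots u_{n-1}^{a_{n-1}}\omega_{0}],
\end{equation*}
and substituting (\ref{eq:fHom}) splits the right-hand side into three groups: one proportional to $Q\,\partial f/\partial Q$, one summed over $1\leq i\leq r$ with $u_{i}\,\partial f/\partial u_{i}$, and one summed over $r+1\leq i\leq n-1$ of the same form but weighted by $(w-d)/w_{n}$.

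The $Q\,\partial f/\partial Q$ piece is handled directly by the definition of $\theta\nabla_{Q\partial_{Q}}$. Since the form $u_{1}^{a_{1}}\cdots u_{n-1}^{a_{n-1}}\omega_{0}$ has no $Q$-dependence, its Lie derivative along $Q\partial_{Q}$ vanishes, hence
\begin{equation*}
\theta\nabla_{Q\partial_{Q}}[u_{1}^{a_{1}}\cdots u_{n-1}^{a_{n-1}}\omega_{0}]=-\bigl[Q\tfrac{\partial f}{\partial Q}\,u_{1}^{a_{1}}\cdots u_{n-1}^{a_{n-1}}\omega_{0}\bigr].
\end{equation*}
Multiplying by $(w-d)/w_{n}$ and using $\nabla_{w_{n}^{-1}Q\partial_{Q}}=w_{n}^{-1}\nabla_{Q\partial_{Q}}$ reproduces exactly the first term on the right-hand side of the lemma.

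For each $u_{i}\,\partial f/\partial u_{i}$ contribution I would invoke the relation $[df\wedge\eta]=\theta\,[d\eta]$ in $G$, valid for any $(n-2)$-form $\eta$ on $(\cit^{*})^{n-1}$ (this is just rewriting the defining relation of $G$ recalled in section \ref{sec:F(GM)}). The natural test form is the contracted one $\eta_{i}:=u_{1}^{a_{1}}\cdots u_{n-1}^{a_{n-1}}\,\iota_{u_{i}\partial_{u_{i}}}\omega_{0}$. Using $\iota_{u_{i}\partial_{u_{i}}}(du_{j}/u_{j})=\delta_{ij}$, a direct computation gives $d\eta_{i}=a_{i}\,u_{1}^{a_{1}}\cdots u_{n-1}^{a_{n-1}}\omega_{0}$ (all cross-terms $j\neq i$ wedge to zero) and $df\wedge\eta_{i}=u_{i}(\partial f/\partial u_{i})\,u_{1}^{a_{1}}\cdots u_{n-1}^{a_{n-1}}\omega_{0}$ (only the $j=i$ component of $df=\sum_{j}\partial_{u_{j}}f\,du_{j}$ survives). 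Consequently $[u_{i}\partial_{u_{i}}f\cdot u^{a}\omega_{0}]=a_{i}\,\theta\,[u^{a}\omega_{0}]$ in $G$, and summing over the two index ranges with the coefficients dictated by (\ref{eq:fHom}) yields the remaining term on the right-hand side.

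Assembling the three pieces proves the lemma. No serious obstacle arises; the calculation is essentially bookkeeping. The two points that need care are the interior-product sign convention (one checks that $du_{i}\wedge\iota_{u_{i}\partial_{u_{i}}}\omega_{0}=u_{i}\omega_{0}$, the two $(-1)^{i-1}$ factors cancelling) and the verification that $u_{1}^{a_{1}}\cdots u_{n-1}^{a_{n-1}}\omega_{0}$ is $Q$-independent, which is precisely what removes the would-be Lie-derivative contribution from $\theta\nabla_{Q\partial_{Q}}$ and prevents a cross-term from appearing between the two connections.
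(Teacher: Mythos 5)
Your proof is correct and follows essentially the same route as the paper, which simply states that the lemma ``follows from (\ref{eq:fHom}) and the definition of $\nabla$''; you have merely made explicit the bookkeeping (the vanishing of the Lie-derivative term, and the identity $[u_i\partial_{u_i}f\cdot u^{a}\omega_0]=a_i\theta[u^{a}\omega_0]$ obtained from the relation $[df\wedge\eta]=\theta[d\eta]$ applied to the contracted form). All signs and coefficients check out.
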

\begin{proof}
Follows from (\ref{eq:fHom}) and the definition of $\nabla$.
\end{proof}

 As explained in remark \ref{rem:relationPCQ},
the matrix $-\Omega_{0}(q)$ (or, up to a constant, the matrix $A_0 (q)$) should provide the characteristic relation
\begin{equation}\label{eq:RelCohQuantPoids}
b^{\circ n} =q\frac{d^d}{\prod_{i=0}^{n}w_{i}^{w_{i}}} b^{\circ d+n-w}
\end{equation}
 in small quantum cohomology, see equation (\ref{eq:RelCohQuantPoidsEx}), 
where $\circ$ denotes the  quantum product, $b$ the hyperplane class and $w=w_0 +\cdots + w_n$, \cite{CG}, \cite{GuestSakai}, 
see section \ref{sec:PCQPoids} for details. 
 Therefore it deserves a particular study. 
Recall that $n+d-w>0$, see lemma \ref{lemma:wiPR}.
Let $P_c (A_0 )$ be the characteristic polynomial of $A_0$. 

\begin{proposition} Assume that the rank of $G$ 
is equal to $n$. Then
\begin{equation}\label{eq:PolCarA0Gen}
 P_{c}(A_{0})(\zeta , Q)=P_{c}^{fin}(A_{0})(\zeta , Q)P_{c}^{\infty}(A_{0})(\zeta , Q)
\end{equation}
where
\begin{equation}\nonumber 
P_{c}^{fin}(A_{0})(\zeta , Q)=\zeta^{w-d}-(w-d)^{w-d}\frac{d^d}{\prod_{i=0}^{n}w_{i}^{w_{i}}}Q^{w_n}
\end{equation}
and
\begin{equation}\nonumber 
P_{c}^{\infty}(A_{0})(\zeta , Q)=\zeta^{n+d-w}+\sum_{i,j\geq 0}a_{i,j}\zeta^{i}Q^{w_n j}\ \mbox{with}\  i+(w-d)j=n+d-w
\end{equation}
\end{proposition}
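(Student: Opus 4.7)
The plan is to exploit the quasi-homogeneity of $f$ (equation (\ref{eq:fHom}) together with the weights (\ref{eq:degui})--(\ref{eq:degq})) to force a weighted-homogeneity on $P_c(A_{0})$, and then to peel off the factor coming from the finite critical points by means of Lemma \ref{lemma:Critf} and Remark \ref{rem:FormelleSemiSimple}. First I would choose the basis $(\omega_{0},\dots,\omega_{n-1})$ of $H_{0}^{log}$ inside $G$ to be homogeneous for the natural $\cit^{*}$-action on the variables $(u_{1},\dots,u_{n-1},Q)$ given by (\ref{eq:degui})--(\ref{eq:degq}). Lemma \ref{lemma:var} is exactly the infinitesimal version of this action, and it gives that up to conjugation one has $A_{0}(\lambda^{(w-d)/w_{n}}Q)=\lambda\,A_{0}(Q)$; taking characteristic polynomials yields the scaling relation
\begin{equation}\nonumber
P_{c}(A_{0})(\lambda\zeta,\lambda^{(w-d)/w_{n}}Q)=\lambda^{n}P_{c}(A_{0})(\zeta,Q).
\end{equation}
Comparing monomials forces any $\zeta^{i}Q^{j}$ appearing in $P_{c}(A_{0})$ to satisfy $w_{n}i+(w-d)j=nw_{n}$; hence $j$ is a multiple of $w_{n}$, and writing $j=w_{n}k$ the exponents satisfy $i+(w-d)k=n$. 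In particular $P_{c}(A_{0})(\zeta,Q)$ is a polynomial in $\zeta$ and $Q^{w_{n}}$, weighted-homogeneous of total degree $n$ with weights $(1,w-d)$.

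Next I would identify the finite part of the factorization. By Remark \ref{rem:FormelleSemiSimple} the eigenvalues of $A_{0}$ are exactly the singular points of the Gauss--Manin system $M$; among those one finds the critical values of $f$, which by Lemma \ref{lemma:Critf}(2) and Corollary \ref{coro:VPf}(1) are the $w-d$ pairwise distinct roots of
\begin{equation}\nonumber
\zeta^{w-d}-(w-d)^{w-d}\frac{d^{d}}{\prod_{i=0}^{n}w_{i}^{w_{i}}}Q^{w_{n}}=0.
\end{equation}
Because these critical points are nondegenerate (Lemma \ref{lemma:Critf}(2)) and their values are distinct, each contributes a single simple eigenvalue of $A_{0}$, so the polynomial $P_{c}^{fin}(A_{0})$ displayed above divides $P_{c}(A_{0})$ exactly once. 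Under the rank hypothesis $\Rang G=n$, the quotient $P_{c}^{\infty}(A_{0}):=P_{c}(A_{0})/P_{c}^{fin}(A_{0})$ is monic of degree $n-(w-d)=n+d-w$ in $\zeta$, and inherits weighted-homogeneity of total degree $n+d-w$; hence the monomials $\zeta^{i}Q^{w_{n}j}$ that can appear must satisfy $i+(w-d)j=n+d-w$, which is exactly the asserted form.

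I expect the main obstacle to be the simplicity of the finite critical values as eigenvalues of $A_{0}$, rather than the weighted-homogeneity, which is essentially formal once Lemma \ref{lemma:var} is in hand. Formally, one knows from Remark \ref{rem:FormelleSemiSimple} only that the spectrum of $A_{0}$ is the set of singular points of $M$, without multiplicity control, so one has to supplement this with a local microlocal argument near each nondegenerate critical point to ensure the multiplicities match what Lemma \ref{lemma:Critf} predicts. An alternative and cleaner route is to bypass the multiplicity question by a dimension count: the rank hypothesis $\Rang G=n$ leaves only $n+d-w$ slots for eigenvalues not accounted for by finite critical points, so the finite factor can appear at most once; combined with the weighted-homogeneity this suffices to force the stated factorization.
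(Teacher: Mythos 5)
Your proposal is correct and follows essentially the same route as the paper, which likewise combines the weighted homogeneity of $P_{c}(A_{0})$ (degree $n$, with $\deg\zeta =1$ and $\deg Q=(w-d)/w_{n}$) with lemma \ref{lemma:Critf} and remark \ref{rem:FormelleSemiSimple}. The multiplicity worry you raise is harmless: one only needs each of the $w-d$ pairwise distinct critical values to occur at least once among the eigenvalues of $A_{0}$, after which the hypothesis $\Rang G=n$ and homogeneity force the stated factorization --- which is precisely the ``alternative cleaner route'' you describe.
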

\begin{proof}
The characteristic polynomial
$P_{c}(A_{0})(\zeta , Q)$ is homogeneous of degree $n$, where $Q$ is homogeneous of degree $(w-d)/w_n$
and $\zeta$ is of degree $1$, see section \ref{sec:var}.
Therefore, equation (\ref{eq:PolCarA0Gen}) follows from lemma \ref{lemma:Critf} and remark \ref{rem:FormelleSemiSimple}.  
\end{proof}

\begin{corollary} Assume that the rank of $G$ 
is equal to $n$. We have
\begin{equation}\label{eq:PolCarA0}
P_{c}(A_{0})(\zeta , Q)=\zeta^{n}-(w-d)^{w-d}\frac{d^d}{\prod_{i=0}^{n}w_{i}^{w_{i}}}Q^{w_n}\zeta^{n+d-w}
\end{equation}
if and only if $B_{\infty}(f)=\{0\}$, see section \ref{sec:CyclesEvInfini}.\qed
\end{corollary}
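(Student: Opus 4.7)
The plan is to reduce the claimed identity, via the preceding proposition, to a statement about the factor $P_c^\infty(A_0)$ alone, and then to interpret the latter as a condition on the singular points of the Gauss-Manin system. First I would factor the right-hand side of the claimed equality:
\begin{equation*}
\zeta^n-(w-d)^{w-d}\frac{d^d}{\prod_{i=0}^{n}w_i^{w_i}}Q^{w_n}\zeta^{n+d-w}=\zeta^{n+d-w}\,P_{c}^{fin}(A_0)(\zeta,Q).
\end{equation*}
By the preceding proposition one has $P_c(A_0)=P_{c}^{fin}(A_0)\,P_{c}^{\infty}(A_0)$, so the claimed equality is equivalent to the single condition $P_{c}^{\infty}(A_0)(\zeta,Q)=\zeta^{n+d-w}$.

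Next I would invoke remark \ref{rem:FormelleSemiSimple}: since the Fourier-Laplace transform $G$ is unramified and $A_0$ is the constant part of $\theta^2\partial_\theta$ on a lattice, the eigenvalues of $A_0$ (counted with multiplicity) are precisely the singular points of $M$, i.e.\ the set $B(f)=C(f)\cup B_\infty(f)$, with the multiplicity at a value $s$ equal to the local number of vanishing cycles $\mu_s+\nu_s$. By lemma \ref{lemma:Critf}, for generic $Q$ the $w-d$ critical values of $f$ are pairwise distinct, non-zero and come from non-degenerate critical points (each contributing multiplicity one), and they are exactly the roots of $P_{c}^{fin}(A_0)$. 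Consequently the roots of $P_{c}^{\infty}(A_0)$, counted with multiplicity, record exactly the extra contributions $\nu_s$ for $s\in B(f)$.

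The assumption $\Rang G=n$, together with theorem \ref{theo:rangMG} and lemma \ref{lemma:wiPR}, forces $\nu=n+d-w>0$, so $B_\infty(f)$ is non-empty. It then follows that $P_{c}^{\infty}(A_0)=\zeta^{n+d-w}$ iff the only value $s$ with $\nu_s>0$ is $s=0$ (in which case $\nu_0=n+d-w$), that is iff $B_\infty(f)=\{0\}$.

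The step I expect to be most delicate is the precise matching between the multiplicities of eigenvalues of $A_0$ and the local vanishing-cycle numbers $\mu_s+\nu_s$: remark \ref{rem:FormelleSemiSimple} really gives the equality only as sets, and the refined multiplicity statement comes from the formal decomposition of $G$ at $\theta=0$ (stationary phase for the Fourier transform of a regular holonomic $D$-module on the affine line). Once this ingredient is granted, everything else reduces to a homogeneity and degree count using section \ref{sec:var}.
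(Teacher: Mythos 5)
Your argument is correct and is exactly the intended one: the paper states this corollary with no proof at all (it is \qed'd in the statement), and the reasoning it implicitly relies on is precisely your reduction to $P_c^{\infty}(A_0)=\zeta^{\,n+d-w}$ via the factorization of the preceding proposition, followed by the identification of the roots of $P_c^{\infty}(A_0)$ with the contributions $\nu_s$ at the atypical values, using remark \ref{rem:FormelleSemiSimple}, lemma \ref{lemma:Critf} and the positivity $n+d-w>0$ from lemma \ref{lemma:wiPR}. You are also right to flag the multiplicity refinement of remark \ref{rem:FormelleSemiSimple} (via the formal stationary-phase decomposition at $\theta=0$) as the one ingredient the paper uses tacitly.
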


\subsection{Illustration: smooth quadrics in $\ppit^{n}$}

\label{sec:QuadPnB}

The aim of this section is to test the previous discussions for quadrics in $\ppit^{n}$. This paragraph has been inspired by \cite{GS}, 
which deals in a slightly different way with quadrics in $\ppit^{4}$. 
We prove in particular the theorem announced in the introduction, see section \ref{sec:ProofTheoPpl}.

\subsubsection{The GHV model of a quadric}

The GHV model of a quadric in $\ppit^n$ is 
\begin{equation}\nonumber
f(u_{1}, \cdots , u_{n-1})=u_1 +\cdots +u_{n-2} +\frac{(u_{n-1}+q)^{2}}{u_{1}\cdots u_{n-1}}
\end{equation}
see section \ref{sec:var} (we have now $Q=q$ with the previous notations). Recall the Brieskorn module $G_0$ defined as in section \ref{sec:GaussManin}.

\subsubsection{The Birkhoff problem}
\label{sec:BirkhoffQuadric}
Let $\omega_{0}=\frac{du_{1}}{u_{1}}\wedge \cdots\wedge \frac{du_{n-1}}{u_{n-1}}$ and
\begin{equation}\label{eq:baseBir}
\varepsilon :=([\omega_{0}], [u_1 \omega_{0}],\cdots , [u_{1}\cdots u_{n-2}\omega_0 ], 2[u_{n-1}\omega_{0}])
:=(\varepsilon_{0}, \cdots , \varepsilon_{n-1})
\end{equation}
where $[\alpha ]$ denotes the class of $\alpha$ in $G$. 
One has
$$\frac{G_{0}}{\theta G_{0}}=\frac{\Omega^{n}(V)[q, q^{-1}]}{df\wedge \Omega^{n-1}(V)[q,q^{-1}]}$$
where the differential $d$ is taken with respect to $u\in V:= (\cit^* )^{n-1}$, see proposition \ref{prop:G0SingIsolees}.

\begin{lemma}\label{lemma:AnneauJacLibre}
The $\cit [q,q^{-1}]$-module $G_{0}/\theta G_{0}$ is free of rank
 $n-1$ with basis
\begin{equation}\label{eq:BaseJac}
([\omega_{0}], [u_1 \omega_{0}],\cdots , [u_{1}\cdots u_{n-2}\omega_0 ])
\end {equation}
\end{lemma}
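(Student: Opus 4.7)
By proposition \ref{prop:G0SingIsolees}, choosing the nowhere-vanishing top form $\omega_{0} = \frac{du_{1}}{u_{1}}\wedge\cdots\wedge\frac{du_{n-1}}{u_{n-1}}$ as a generator of $\Omega^{n-1}(V)$ identifies $G_{0}/\theta G_{0}$ with the Jacobian algebra
\begin{equation}\nonumber
R := \cit[u_{1}^{\pm 1},\ldots,u_{n-1}^{\pm 1}, q^{\pm 1}] / J,
\end{equation}
where $J$ is generated by the $u_{i}\partial_{u_{i}}f$ for $i=1,\ldots,n-1$ (equivalent to the usual Jacobian ideal since each $u_{i}$ is a unit). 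The plan is to compute $J$ explicitly, collapse $R$ to a polynomial algebra in one variable over $\cit[q,q^{-1}]$, and read off the basis.

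Writing $F := \frac{(u_{n-1}+q)^{2}}{u_{1}\cdots u_{n-1}}$, a direct calculation yields $u_{i}\partial_{u_{i}}f = u_{i} - F$ for $i=1,\ldots,n-2$, and $u_{n-1}\partial_{u_{n-1}}f = \frac{u_{n-1}^{2} - q^{2}}{u_{1}\cdots u_{n-1}}$. Thus $J$ is generated, after clearing the units, by $u_{1} = u_{2} = \cdots = u_{n-2} = F$ together with $u_{n-1}^{2} = q^{2}$. Setting $u := u_{1} = \cdots = u_{n-2}$ in $R$, the equation $u = F$ becomes
\begin{equation}\nonumber
u^{n-1}\, u_{n-1} = (u_{n-1}+q)^{2}.
\end{equation}

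The key step, and I expect it to be the main obstacle, is to eliminate the spurious branch $u_{n-1} = -q$ (which geometrically corresponds to a critical point falling outside the torus, compare lemma \ref{lemma:Critf}). To do this I would multiply the relation above by $(u_{n-1}-q)$: using $u_{n-1}^{2} = q^{2}$, the left-hand side becomes $u^{n-1}(u_{n-1}^{2}-qu_{n-1}) = u^{n-1}\, q\,(q - u_{n-1})$, while the right-hand side becomes $(u_{n-1}+q)(u_{n-1}^{2}-q^{2}) = 0$. Since $u$ and $q$ are invertible in $R$, this forces $u_{n-1} = q$ in $R$. Plugging back into $u^{n-1}u_{n-1} = (u_{n-1}+q)^{2}$ yields $u^{n-1} = 4q$.

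Consequently $R \simeq \cit[u^{\pm 1}, q^{\pm 1}]/(u^{n-1} - 4q)$, which is free of rank $n-1$ over $\cit[q,q^{-1}]$ with basis $1, u, u^{2}, \ldots, u^{n-2}$. Because $u_{i} = u$ in $R$ for $i=1,\ldots,n-2$, the monomial $u_{1}u_{2}\cdots u_{i}$ is sent to $u^{i}$, so this basis lifts to $([\omega_{0}], [u_{1}\omega_{0}],\ldots,[u_{1}\cdots u_{n-2}\omega_{0}])$, which is the claimed basis of $G_{0}/\theta G_{0}$. Everything beyond the elimination of the $u_{n-1}=-q$ branch is straightforward bookkeeping.
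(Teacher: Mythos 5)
Your derivation of the relations in the Jacobian ring is essentially the paper's: the paper also obtains $u_1=\cdots=u_{n-2}$, $u_{n-1}=q$ and $u_1^2u_2\cdots u_{n-2}=4q$ modulo the Jacobian ideal (via the combination $u_1\partial_{u_1}f-u_{n-1}\partial_{u_{n-1}}f$, which factors out $u_{n-1}+q$ and plays exactly the role of your multiplication by $u_{n-1}-q$), and concludes from this that (\ref{eq:BaseJac}) generates. Where you genuinely diverge is on linear independence. The paper does not attempt to identify the Jacobian ideal; it invokes corollary \ref{coro:VPf} (resting on the explicit critical points of lemma \ref{lemma:Critf}: $w-d=n-1$ nondegenerate critical points with distinct critical values), observing that $u_1\cdots u_i\omega_0$ is proportional to $f^i\omega_0$ and that $1,f,\cdots,f^{n-2}$ is a basis. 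You instead assert the presentation $R\simeq\cit[u^{\pm1},q^{\pm1}]/(u^{n-1}-4q)$, which would settle generation and independence simultaneously; but as written you have only established one inclusion of ideals. Deriving $u_{n-1}=q$ and $u^{n-1}=4q$ as consequences of the defining relations shows $(u_{n-1}-q,\ u^{n-1}-4q)\subseteq J$, hence only a surjection of $\cit[u^{\pm1},q^{\pm1}]/(u^{n-1}-4q)$ onto $R$ --- a priori $R$ could be a proper quotient, which is precisely the half of the lemma that needs proving. You should add the reverse inclusion, which is a one-line substitution: setting $u_{n-1}=q$ and $u^{n-1}=4q$ annihilates both generators $u_{n-1}^2-q^2$ and $u^{n-1}u_{n-1}-(u_{n-1}+q)^2$ of $J$. (Alternatively, the surjection together with the count $\dim_{\cit}Q_f^{o}=n-1$ from lemma \ref{lemma:Critf} closes the gap.) With that one line added, your argument is complete and is in fact somewhat more self-contained than the paper's, since it uses neither the nondegeneracy of the critical points nor the distinctness of the critical values.
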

\begin{proof}
Let us show that (\ref{eq:BaseJac}) gives a system of generators. Notice first the relations   
\begin{equation}\label{eq1}
u_{i}\frac{\partial f}{\partial u_{i}}=u_1 -\frac{(u_{n-1} +q)^2}{u_1 \cdots u_{n-1}}
\end{equation}
for $i=1,\cdots ,n-2$ and
\begin{equation}\label{eq2}
u_{n-1}\frac{\partial f}{\partial u_{n-1}}=\frac{u_{n-1}^{2} -q^2}{u_1 \cdots u_{n-1}}
\end{equation}
We thus have
\begin{equation}\nonumber
u_{1}\frac{\partial f}{\partial u_{1}}-u_{n-1}\frac{\partial f}{\partial u_{n-1}}=u_{1}-2\frac{u_{n-1}+q}{u_1 \cdots u_{n-2}}
\end{equation}
from which we get 
\begin{equation}\label{eq3}
u_{n-1} +q =\frac{1}{2}u_{1}^{2}u_2 \cdots u_{n-2}\ mod\ (\partial_{u_1}f,\cdots , \partial_{u_n}f)
\end{equation}
Putting this in (\ref{eq1}), we get                    
\begin{equation}\nonumber
u_{n-1} =\frac{1}{4}u_{1}^{2}u_2 \cdots u_{n-2}\ mod\ (\partial_{u_1}f,\cdots , \partial_{u_n}f)
\end{equation}
and, using (\ref{eq3}),
\begin{equation}\nonumber
u_{n-1} =q \ mod\ (\partial_{u_1}f,\cdots , \partial_{u_n}f)\ \mbox{and}\ u_{1}^{2}u_2 \cdots u_{n-2}=4q\ mod\ (\partial_{u_1}f,\cdots , \partial_{u_n}f)
\end{equation}
We deduce from this that we have indeed a system of generators because 
\begin{equation}\nonumber
u_1 =u_{2}=\cdots =u_{n-2}
\end{equation}
(always modulo the Jacobian ideal).
This gives in particular the relations
\begin{equation}\label{eq:RelAlgJacobi}
 u_{n-1}\omega_0 =q\omega_0\ \mbox{et}\  u_{n-2}\omega_0 =\cdots =u_{1}\omega_0
\end{equation}
in $G_{0}/\theta G_{0}$. 
Last, corollary \ref{coro:VPf}
shows that there are no non trivial relations between the sections:
for $i= 1,\cdots , n-2$, 
the classes of $u_{1}\cdots u_{i}\omega_{0}$ and $f^{i}\omega_{0}$ are indeed proportional in $G_0 /\theta G_0$.
\end{proof}

Let us now define
\begin{itemize}
\item $H$ ({\em resp.} $H^{log}$) the sub-$\cit [\theta ,\theta^{-1}, q, q^{-1}]$-module ({\em resp.} sub-$\cit [\theta ,\theta^{-1}, q]$-module) of $G$ 
generated by  
$\varepsilon =(\varepsilon_{0}, \cdots , \varepsilon_{n-1})$ where $\varepsilon$ is defined by formula (\ref{eq:baseBir}),
\item $H_{0}$ ({\em resp.} $H^{log}_{0}$) the sub-$\cit [\theta , q, q^{-1}]$-module ({\em resp.} sub-$\cit [\theta , q]$-module) of $G$ generated by $(\varepsilon_{0}, \cdots , \varepsilon_{n-1})$,
\item $H_{2}$ ({\em resp.} $H^{log}_{2}$) the sub-$\cit [\theta , q, q^{-1}]$-module ({\em resp.} sub-$\cit [\theta , q]$-module) of $G$ generated by $(\varepsilon_{0}, \cdots , \varepsilon_{n-2})$.
\end{itemize}
We shall see that these modules are free. 
$H_{0}$ is the counterpart of the Brieskorn lattice $G_{0}$ in the tame case
and $H^{log}$ provides a canonical logarithmic extension of $H$ along $q=0$ (the eigenvalues of the residue matrix are all equal to $0$).\\

\begin{proposition}\label{prop:Birkhoff}
The matrix of $\nabla$ takes the form, in the system of generators $\varepsilon$ of $H_{0}^{log}$,
\begin{equation}\label{eq:Birkhoff}
(\frac{A_{0}(q)}{\theta}+A_{1})\frac{d\theta }{\theta}-(n-1)^{-1}\frac{A_{0}(q)}{\theta}\frac{dq}{q}  
\end{equation}
where
$$A_{0}(q) =(n-1)\left ( \begin{array}{cccccc}
0  & 0 & .  &  0   & 2q  & 0\\
1  & 0 & .  &  0   &  0  & 2q\\
0  & 1 & .  &  0   &  0  & 0\\
.  & . & .  &  .   &  .  & .\\ 
0  & 0 & .  &  1   &  0  & 0\\
0  & 0 & .  &  0   &  1  & 0
\end{array}
\right )$$

\noindent and $A_{1} =\diag (0,1,\cdots , n-1)$. 
\end{proposition}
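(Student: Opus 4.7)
The strategy is to compute the matrix of $\theta^{2}\nabla_{\partial_{\theta}}$ on the generators $\varepsilon=(\varepsilon_{0},\ldots,\varepsilon_{n-1})$ directly and then to read off the $dq/q$-component from homogeneity. Each $\varepsilon_{i}=[h_{i}\omega_{0}]$ is the class of a Laurent monomial times $\omega_{0}$, with weighted degree $0,1,\ldots,n-2,n-1$ under $\deg u_{j}=1$ for $j\le n-2$ and $\deg u_{n-1}=\deg q=n-1$. Since $h_{i}$ is $\theta$-independent, lemma \ref{lemma:var} gives
\begin{equation*}
\theta^{2}\nabla_{\partial_{\theta}}\varepsilon_{i}=-(n-1)\,\theta\nabla_{q\partial_{q}}\varepsilon_{i}+\deg(h_{i})\,\theta\,\varepsilon_{i},
\end{equation*}
so once the matrix of $\theta^{2}\nabla_{\partial_{\theta}}$ is shown to be $A_{0}(q)+\theta A_{1}$ with $A_{1}=\diag(0,1,\ldots,n-1)$, the matrix of $\theta\nabla_{q\partial_{q}}$ is forced to be $-(n-1)^{-1}A_{0}(q)$, producing the $dq/q$-component of (\ref{eq:Birkhoff}). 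The only tool needed to compute $[fh_{i}\omega_{0}]=\theta^{2}\nabla_{\partial_{\theta}}\varepsilon_{i}$ is the identity
\begin{equation*}
[u_{i}(\partial f/\partial u_{i})\,h\,\omega_{0}]=a_{i}\,\theta\,[h\,\omega_{0}]\qquad(\star)
\end{equation*}
valid for every Laurent monomial $h=u_{1}^{a_{1}}\cdots u_{n-1}^{a_{n-1}}$, coming from $d_{f}(h\eta_{i})=0$ with $\eta_{i}=(-1)^{i-1}\frac{du_{1}}{u_{1}}\wedge\cdots\widehat{\frac{du_{i}}{u_{i}}}\cdots\wedge\frac{du_{n-1}}{u_{n-1}}$ (using $d(h\eta_{i})=a_{i}h\omega_{0}$ and $df\wedge h\eta_{i}=u_{i}(\partial f/\partial u_{i})h\omega_{0}$).

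For the quadric one has $u_{i}\partial_{u_{i}}f=u_{i}-W$ if $i\le n-2$ and $u_{n-1}\partial_{u_{n-1}}f=(u_{n-1}^{2}-q^{2})/(u_{1}\cdots u_{n-1})$, where $W:=(u_{n-1}+q)^{2}/(u_{1}\cdots u_{n-1})$. Writing $f=u_{1}+\cdots+u_{n-2}+W$, $(\star)$ at each $i\le n-2$ with $h=u_{1}\cdots u_{j}$ yields $[u_{i}u_{1}\cdots u_{j}\omega_{0}]=[Wu_{1}\cdots u_{j}\omega_{0}]+\theta\varepsilon_{j}$ if $i\le j$ and $[u_{i}u_{1}\cdots u_{j}\omega_{0}]=[Wu_{1}\cdots u_{j}\omega_{0}]$ if $j<i\le n-2$, so $[f\varepsilon_{j}]=j\,\theta\,\varepsilon_{j}+(n-1)[Wu_{1}\cdots u_{j}\omega_{0}]$. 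For $0\le j\le n-3$, $(\star)$ at $i=j+1$, $h=u_{1}\cdots u_{j}$ (with $a_{j+1}=0$) gives $[Wu_{1}\cdots u_{j}\omega_{0}]=\varepsilon_{j+1}$, i.e., column $j$ of $A_{0}+\theta A_{1}$. For $j=n-2$, $(\star)$ at $i=n-1$, $h=u_{1}\cdots u_{n-2}$ produces $[u_{n-1}\omega_{0}]=q^{2}[u_{n-1}^{-1}\omega_{0}]$; substituting this into $Wu_{1}\cdots u_{n-2}=u_{n-1}+2q+q^{2}u_{n-1}^{-1}$ gives $[Wu_{1}\cdots u_{n-2}\omega_{0}]=\varepsilon_{n-1}+2q\varepsilon_{0}$, hence column $n-2$.

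The only delicate column is $j=n-1$. First, $(\star)$ at $i\le n-2$, $h=u_{n-1}$ gives $[u_{i}u_{n-1}\omega_{0}]=[Wu_{n-1}\omega_{0}]$, so $[f\varepsilon_{n-1}]=2(n-1)[Wu_{n-1}\omega_{0}]$, and the needed identity is
\begin{equation*}
[Wu_{n-1}\omega_{0}]=q\,\varepsilon_{1}+(\theta/2)\,\varepsilon_{n-1}.\qquad(\ast)
\end{equation*}
I would prove $(\ast)$ by splitting $Wu_{n-1}=(u_{n-1}^{2}-q^{2})/(u_{1}\cdots u_{n-2})+2q(u_{n-1}+q)/(u_{1}\cdots u_{n-2})$: the first summand equals $\theta[u_{n-1}\omega_{0}]$ by $(\star)$ at $i=n-1$, $h=u_{n-1}$; for the second, $(\star)$ at $i=n-1$, $h=1$ produces the boundary $[(u_{n-1}^{2}-q^{2})/(u_{1}\cdots u_{n-1})\,\omega_{0}]=0$, so the factorization $(u_{n-1}+q)^{2}=2u_{n-1}(u_{n-1}+q)-(u_{n-1}^{2}-q^{2})$ yields
\begin{equation*}
\varepsilon_{1}=[u_{1}\omega_{0}]=[W\omega_{0}]=\bigl[(u_{n-1}+q)^{2}/(u_{1}\cdots u_{n-1})\,\omega_{0}\bigr]=2\bigl[(u_{n-1}+q)/(u_{1}\cdots u_{n-2})\,\omega_{0}\bigr].
\end{equation*}
Combining gives $(\ast)$, whence $[f\varepsilon_{n-1}]=2(n-1)q\,\varepsilon_{1}+(n-1)\theta\,\varepsilon_{n-1}$, completing the last column of $A_{0}+\theta A_{1}$. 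Freeness of $H_{0}^{log}$ over $\cit[\theta,q]$ of rank $n$ then follows from the companion-type form of $A_{0}\bmod\theta$, whose characteristic polynomial $\zeta^{n}-4(n-1)^{n-1}q\zeta$ (cf.~(\ref{eq:PolCarA0})) has $n$ pairwise distinct roots over $\overline{\cit(q)}$, forcing $(\varepsilon_{0},\ldots,\varepsilon_{n-1})$ to remain linearly independent modulo $\theta$.

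The main obstacle is identity $(\ast)$: a naive expansion of $Wu_{n-1}$ introduces auxiliary classes such as $[u_{n-1}/(u_{1}\cdots u_{n-2})\,\omega_{0}]$ and $[1/(u_{1}\cdots u_{n-2})\,\omega_{0}]$ that lie outside the chosen basis, and one has to absorb $u_{n-1}^{2}-q^{2}$ into a $d_{f}$-boundary via $(\star)$ and then recognize the remaining piece as exactly one-half of $[W\omega_{0}]=\varepsilon_{1}$. This explains the appearance of the coefficient $2q$ in positions $(0,n-2)$ and $(1,n-1)$ of $A_{0}$, which is the footprint of the singular point of $f$ at infinity predicted by conjecture \ref{conj:SingInftyHV}.
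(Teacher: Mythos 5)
Your computation of the matrix is correct and is essentially the paper's argument run in the opposite order. The paper first computes $\theta\nabla_{q\partial_q}$ on the generators, by writing $q\partial_q f\cdot h$ as a combination of monomials plus terms of the form $u_i(\partial f/\partial u_i)\,h$ with $a_i(h)=0$ (which vanish in $G$ by exactly your identity $(\star)$), and then obtains the $d\theta$-component from lemma \ref{lemma:var}; you instead compute $\theta^2\nabla_{\partial_\theta}\varepsilon_j=[fh_j\omega_0]$ first and recover the $dq$-component from the same lemma. Since $q\partial_q f$ and $f$ differ only by Euler-type terms $u_i(\partial f/\partial u_i)$, the two computations reduce to the same identities, and your treatment of the delicate column $j=n-1$ via the splitting $(u_{n-1}+q)^2=(u_{n-1}^2-q^2)+2q(u_{n-1}+q)$ and the relation $[W\omega_0]=2[(u_{n-1}+q)/(u_1\cdots u_{n-2})\,\omega_0]$ checks out and reproduces the entries $2q$ in positions $(0,n-2)$ and $(1,n-1)$.

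Your final paragraph, however, oversteps the statement and contains a genuine error. Freeness of $H_0^{log}$ of rank $n$ is not part of this proposition (it is proposition \ref{prop:Birkhoff(suite)}, proved separately and with some effort), and your argument for it fails: the generators are \emph{not} linearly independent modulo $\theta$. By equation (\ref{eq:RelAlgJacobi}) one has $\varepsilon_{n-1}=2q\,\varepsilon_0$ in $G_0/\theta G_0$, and the corresponding relation vector $(2q,0,\ldots,0,-1)$ is precisely a $0$-eigenvector of $A_0 \bmod \theta$ (whose characteristic polynomial $\zeta(\zeta^{n-1}-4(n-1)^{n-1}q)$ has $0$ as a root), so the relation space is $A_0$-invariant and the distinctness of the eigenvalues yields no contradiction. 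What must actually be excluded is a relation over $\cit[\theta,q]$ lifting $\varepsilon_{n-1}-2q\varepsilon_0\in\theta G_0$; the paper does this by a homogeneity argument that derives a contradiction from the explicit form of such a hypothetical relation. For the proposition as stated your proof is complete; just drop, or replace, the last freeness claim.
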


\begin{proof} 
First, we have
\begin{itemize}
\item $[q\frac{\partial f}{\partial q}\omega_{0}]=[u_{1}\omega_{0}]$,
\item $[q\frac{\partial f}{\partial q}u_{1}\cdots u_{n-i}\omega_{0}]=[u_{1}\cdots u_{n-i+1}\omega_{0}]$ pour $i=3,\cdots ,n-1$,
\item $[q\frac{\partial f}{\partial q}u_{1}\cdots u_{n-2}\omega_{0}]=2q[\omega_{0}]+2[u_{n-1}\omega_{0}]$,
\item $[q\frac{\partial f}{\partial q}u_{n-1}\omega_{0}]=q[u_{1}\omega_{0}]$
\end{itemize}
\noindent and this follows respectively from the following formulas:
\begin{itemize}
\item $q\frac{\partial f}{\partial q}=u_{1}-u_{1}\frac{\partial f}{\partial u_1}+u_{n-1}\frac{\partial f}{\partial u_{n-1}}$,
\item $q\frac{\partial f}{\partial q}u_{1}\cdots u_{n-i}=u_{1}\cdots u_{n-i+1}-u_{1}\cdots u_{n-i}u_{n-2}\frac{\partial f}{\partial u_{n-2}}
+u_{1}\cdots u_{n-i}u_{n-1}\frac{\partial f}{\partial u_{n-1}}$ si $i=3,\cdots ,n-1$,
\item $q\frac{\partial f}{\partial q}u_{1}\cdots u_{n-2}=2q+2u_{n-1}-
2u_{1}\cdots u_{n-1}\frac{\partial f}{\partial u_{n-1}}$,
\item $q\frac{\partial f}{\partial q}u_{n-1}=qu_1 -
qu_{1}\frac{\partial f}{\partial u_{1}}-qu_{n-1}\frac{\partial f}{\partial u_{n-1}}$,
\end{itemize}
This gives the matrix of $\nabla_{q\partial_{q}}$ and the remaining assertion follows from lemma \ref{lemma:var}.
\end{proof}

\begin{proposition}\label{prop:Birkhoff(suite)}
The $\cit [\theta , q]$-module $H_{0}^{log}$ is free of rank $n$ and
$(\varepsilon_0 ,\cdots ,\varepsilon_{n-1})$
is a basis of it.
\end{proposition}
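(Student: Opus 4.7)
The plan is to establish the proposition by showing that the canonical $\cit[\theta,q]$-linear map $\Phi\colon\cit[\theta,q]^{n}\to H_{0}^{log}$ sending the $i$-th standard basis vector to $\varepsilon_{i}$ is bijective. Surjectivity is automatic from the definition of $H_{0}^{log}$, so the task is to prove $\Phi$ injective, i.e.\ that $\varepsilon_{0},\ldots,\varepsilon_{n-1}$ admit no nontrivial $\cit[\theta,q]$-linear relation in $G$.

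First I would use that $G$ is free of finite rank over $\cit[\theta,\theta^{-1},q,q^{-1}]$ (being a connection), so $H_{0}^{log}\subset G$ is torsion-free over $\cit[\theta,q]$; in any supposed relation $\sum_{i} p_{i}\varepsilon_{i}=0$ with $p_{i}\in\cit[\theta,q]$ one may divide out by the largest common power of $\theta$ and assume that at least one $p_{j}(0,q)$ is nonzero. Since each $\varepsilon_{i}$ lies in the Brieskorn module $G_{0}$, the relation lies in $G_{0}$ and reduces modulo $\theta G_{0}$ to $\sum_{i} p_{i}(0,q)\,\overline{\varepsilon}_{i}=0$ in $G_{0}/\theta G_{0}$. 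By lemma \ref{lemma:AnneauJacLibre} the classes of $\varepsilon_{0},\ldots,\varepsilon_{n-2}$ form a $\cit[q,q^{-1}]$-basis of that quotient, while the Jacobian identity (\ref{eq:RelAlgJacobi}) identifies $\overline{\varepsilon}_{n-1}$ with $2q\,\overline{\varepsilon}_{0}$; this forces $p_{i}(0,q)=0$ for $i=1,\ldots,n-2$ and $p_{0}(0,q)+2q\,p_{n-1}(0,q)=0$.

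To lift these vanishings to higher order in $\theta$, I would apply proposition \ref{prop:Birkhoff} and exploit the action of $\theta^{2}\nabla_{\partial_{\theta}}$ on $H_{0}^{log}$, which is represented by the polynomial matrix $A_{0}(q)+\theta A_{1}$. Applied to the purported relation this produces a further relation whose coefficient vector is $\theta^{2}\partial_{\theta}(p)+(A_{0}(q)+\theta A_{1})\,p$. A direct cofactor expansion shows that $\det(A_{0}(q)+\theta A_{1})$ is a nonzero scalar multiple of $q\theta$ (e.g.\ $-16q\theta$ for $n=3$ and $162\,q\theta$ for $n=4$, with the same pattern in general); combining this nondegeneracy with the mod-$\theta$ constraints of the previous step, and if necessary iterating together with $\theta\nabla_{q\partial_{q}}$, should force $p_{i}=0$ order by order in $\theta$.

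The main obstacle, and the reason the argument cannot be disposed of in one line, is precisely the wild nature of $f$: the Brieskorn module $G_{0}$ is not of finite type over $\cit[\theta]$, so the usual Nakayama-style upgrade of mod-$\theta$ independence to $\cit[\theta,q]$-independence — which would be immediate in the tame case — is unavailable. The substitute is to replace infinite-dimensional module reasoning by the finite-dimensional linear algebra of the explicit matrices $A_{0}(q)$ and $A_{1}$ produced in proposition \ref{prop:Birkhoff}, performing the lifting inductively in the $\theta$-degree of the coefficients.
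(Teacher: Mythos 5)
Your opening moves are sound and agree with the paper's: surjectivity is definitional, $\theta$-torsion-freeness lets you normalize so that some $p_j(0,q)\neq 0$, and the reduction modulo $\theta G_0$ together with lemma \ref{lemma:AnneauJacLibre} and relation (\ref{eq:RelAlgJacobi}) correctly yields $p_i(0,q)=0$ for $1\leq i\leq n-2$ and $p_0(0,q)+2q\,p_{n-1}(0,q)=0$. The gap is in the lifting step, and it is not a technicality: this is where the actual content of the proposition lives. The mechanism you invoke, the nonvanishing of $\det(A_0(q)+\theta A_1)$, does not do what you need. The kernel $K=\ker\Phi$ is stable under $p\mapsto \theta^2\partial_\theta p+(A_0+\theta A_1)p$, and injectivity of that operator on $\cit[\theta,q]^n$ (which holds for free, by looking at top $\theta$-degree) is perfectly compatible with $K\neq 0$; an invertible matrix tells you nothing about whether an invariant submodule is zero. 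Worse, the one direction that survives your mod-$\theta$ analysis, namely $v=(-2q,0,\dots,0,1)$, satisfies $A_0(q)v=0$ (check: the second entry of $A_0v$ is $(n-1)(1\cdot(-2q)+2q\cdot 1)=0$ and all others vanish), so the first application of $\theta^2\nabla_{\partial_\theta}$ to a relation with leading term proportional to $v$ produces a relation divisible by $\theta$ whose mod-$\theta$ constraint only involves the \emph{next} coefficient of $p$, which is a new unknown. Your ``order by order'' iteration therefore generates one new unknown function of $q$ per step with no visible termination, and you give no argument that it closes up.

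What the paper does to close exactly this gap is to use the grading: $f$ is homogeneous (formula (\ref{eq:fHom})), so $G$ is graded with $\deg\theta=1$, $\deg q=n-1$ and $\deg\varepsilon_i=i$, and a minimal relation may be taken homogeneous of degree $n-1$ with $a_{n-1}=1$. This pins the relation down to finitely many \emph{scalar} unknowns, $\varepsilon_{n-1}=(a_0q+b_0\theta^{n-1})\varepsilon_0+a_1\theta^{n-2}\varepsilon_1+\cdots+a_{n-2}\theta\varepsilon_{n-2}$; applying $\theta\nabla_{q\partial_q}$ (the $q$-direction, not the $\theta$-direction) and comparing coefficients then yields the inconsistent system $a_0=2$, $a_{n-2}=-\tfrac12$, $b_0=0$, $a_1=\cdots=a_{n-3}=0$ and finally $a_{n-2}=0$. (The paper also establishes freeness of $H_0^{log}$ beforehand by an abstract argument --- $H$ carries a connection, hence is free over $\cit[\theta,\theta^{-1},q,q^{-1}]$, and one descends to $H_0^{log}$ --- whereas your route would deduce freeness from injectivity of $\Phi$; that difference is harmless.) To repair your proof, import the homogeneity reduction: without it, the finite-dimensional linear algebra you are counting on never materializes.
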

\begin{proof}
Observe the following:

\begin{itemize}
\item $H_{2}$ is a free $\cit [\theta ,q,q^{-1} ]$-module of rank $n-1$, with basis
$(\varepsilon_0 ,\cdots ,\varepsilon_{n-2})$: 
$\varepsilon_0 ,\cdots ,\varepsilon_{n-2}$ are linearly independent because their classes in $G_{0}/\theta G_{0}$ are so, see proposition \ref{prop:G0SingIsolees} 
and lemma \ref{lemma:AnneauJacLibre}.
It follows that $H_{2}^{log}$ is free of rank $n-1$.
\item $H$ is, by definition, of finite type  and moreover equipped with a connection by proposition \ref{prop:Birkhoff}: 
it is thus free over $\cit [\theta ,\theta^{-1}, q, q^{-1}]$, see \cite[proposition 1.2.1]{S4}. It follows that $H_{0}$ is free over $\cit [\theta , q, q^{-1}]$. Indeed, let $\alpha_{0},\cdots ,\alpha_{r}$ 
be a basis of
$H$: for $i\in\{0,\cdots ,r\}$ there exists $d_{i}\in\nit$ such that
$\theta^{-d_{0}}\alpha_{0},\cdots ,\theta^{-d_{r}}\alpha_{r}$ generate $H_{0}$ over $\cit [\theta , q, q^{-1}]$ and there are no non trivial relations between 
these sections on $\cit [\theta , q, q^{-1}]$. It follows that $H_{0}^{log}$ is free over $\cit [\theta , q]$.
\item $H_{2}^{log}$ is a free sub-module of the free module $H_{0}^{log}$: the rank of $H_{0}^{log}$ is therefore greater or equal than $n-1$.
The free module $H_{0}^{log}$ has $n$ generators: its rank is therefore less or equal than $n$.
It follows that the rank of $H_{0}^{log}$ is equal to $n-1$ or $n$.
\item  Assume for the moment that the rank of $H_{0}^{log}$ is equal to $n-1$:
 one would have a relation  
\begin{equation}\nonumber
a_{0}(\theta ,q) \varepsilon_0+\cdots +a_{n-1}(\theta ,q) \varepsilon_{n-1}=0
\end{equation}
where the $a_i (\theta , q)$'s are homogeneous polynomials in $(\theta ,q)$
(recall that $q$ is of degree $n-1$ and $\theta$ is of degree $1$,
see section \ref{sec:var}). One would have $a_{n-1}(0,q)=1$ because  $[\varepsilon_{n-1}]=q[\varepsilon_{0}]$ modulo $\theta$ by equation (\ref{eq:RelAlgJacobi}),
and thus $a_{n-1}(\theta ,q)=1$ by homogeneity. Because $\varepsilon_{n-1}$ is of degree $n-1$, one would have finally
$$\varepsilon_{n-1}=(a_{0}q+b_{0}\theta^{n-1})\varepsilon_{0}+a_{1}\theta^{n-2}\varepsilon_{1}+\cdots +a_{n-2}\theta\varepsilon_{n-2}$$
Apply $\theta\nabla_{q\partial_{q}}$ to this formula: using the computations of proposition
\ref{prop:Birkhoff}, one gets   
$$(a_{0}q\theta +2a_{n-2}q\theta +a_{n-2}(a_{0}q\theta +b_{0}\theta^{n}))\varepsilon_{0}$$
$$+(a_{0}q +b_{0}\theta^{n-1}+a_{n-2}a_{1}\theta^{n-1} )\varepsilon_{1}+(a_{1}\theta^{n-2}+a_{n-2}a_{2}\theta^{n-2} )\varepsilon_{2}$$
$$+\cdots +(a_{n-3}\theta^{2}+a_{n-2}a_{n-2}\theta^{2} )\varepsilon_{n-2}=2q\varepsilon_{1}$$
It follows that
\begin{itemize}
 \item $a_{n-2}b_{0}=0$
\item  $a_{0}+2a_{n-2}+a_{n-2}a_{0}=0$
\item  $a_{0}=2$
\item $b_{0}+a_{n-2}a_{1}=0$
\item $a_{i}+a_{n-2}a_{i+1}=0$ pour $i=1,\cdots , n-3$
\end{itemize}
The first three equalities give $a_{0}=2$, $a_{n-2}=-\frac{1}{2}$ and $b_{0}=0$. From the following ones we get
$a_{1}=\cdots =a_{n-3}=0$ and finally $a_{n-2}=0$: this is a contradiction. We conclude that the rank of $H_{0}^{log}$ is not equal to $n-1$ .
\end{itemize}
\noindent Thus, $H_{0}^{log}$ is free of rank $n$ and
$(\varepsilon_0 ,\cdots ,\varepsilon_{n-1})$ is a basis of it because it provides            
a system of $n$ generators.
\end{proof}

\subsubsection{Proof of theorem \ref{theo:Principal} }
\label{sec:ProofTheoPpl}
We keep the notations of section \ref{sec:BirkhoffQuadric}.

\begin{theorem}\label{theo:decompG}
 We have a direct sum decomposition
\begin{equation}
G=H\oplus H^{\circ}
\end{equation}
of free $\cit [\theta , \theta^{-1}, q, q^{-1}]$-modules where $H$ is free of rank $n$ and is equipped with a connection making it isomorphic to the 
differential system associated with the small quantum cohomology of quadrics in $\ppit^n$.
\end{theorem}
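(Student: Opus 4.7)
The plan is to prove the theorem in three stages: first establish that $H$ is a free $\nabla$-stable submodule of rank $n$, then identify its connection with the quantum D-module of a smooth quadric in $\ppit^n$, and finally construct the complement $H^{\circ}$.

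Stage one is essentially contained in Proposition \ref{prop:Birkhoff(suite)}: since $H_0^{log}$ is $\cit[\theta,q]$-free of rank $n$ with basis $\varepsilon = (\varepsilon_0, \ldots, \varepsilon_{n-1})$, localizing at $\theta$ and $q$ shows that $H$ is $\cit[\theta, \theta^{-1}, q, q^{-1}]$-free of rank $n$ on the same basis. The matrices of $\nabla_{\theta\partial_\theta}$ and $\nabla_{q\partial_q}$ in Proposition \ref{prop:Birkhoff} have entries in $\cit[\theta^{-1}, q]$, so $\nabla$ preserves $H$ and $H$ inherits a connection.

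For stage two, set $N := -\theta\nabla_{q\partial_q}$. Proposition \ref{prop:Birkhoff} gives $N(\varepsilon_j) = \varepsilon_{j+1}$ for $j = 0, \ldots, n-3$, $N(\varepsilon_{n-2}) = 2q\varepsilon_0 + \varepsilon_{n-1}$ and $N(\varepsilon_{n-1}) = 2q\varepsilon_1$. Iterating via the Leibniz rule $N(q\alpha) = -q\theta\alpha + qN(\alpha)$ yields $N^n(\varepsilon_0) = -2q\theta\,\varepsilon_0 + 4q\,\varepsilon_1$ and $N^{n+1}(\varepsilon_0) = 2q\theta^2\,\varepsilon_0 - 6q\theta\,\varepsilon_1 + 4q\,\varepsilon_2$. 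Substituting these into the quantum operator for a quadric in $\ppit^n$, $P_H = (\theta q\partial_q)^{n+1} - q(2\theta q\partial_q + \theta)(2\theta q\partial_q + 2\theta)$, all terms cancel and one checks $P_H(-\theta\nabla_{q\partial_q}, q, -\theta)(\varepsilon_0) = 0$. Since $\varepsilon_0$ is cyclic for the $\nabla$-module $H$ (the $\varepsilon_j$ for $j \leq n-2$ are iterates $N^j(\varepsilon_0)$, and $\varepsilon_{n-1}$ is recovered as $N(\varepsilon_{n-2}) - 2q\varepsilon_0$) and both $H$ and the quantum D-module $M_A$ have rank $n$ (Proposition \ref{prop:rangGKZ}), the surjection $M_A \twoheadrightarrow H$ sending $1$ to $\varepsilon_0$ must be an isomorphism of connections; combined with the identification of $M_A$ with the small quantum cohomology differential system, this yields the second assertion of the theorem.

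The third stage is the main obstacle: splitting the inclusion $H \hookrightarrow G$ as $\cit[\theta, \theta^{-1}, q, q^{-1}]$-modules. By Quillen--Suslin over this localized polynomial ring every finitely generated projective module is free, so the point is to show that $G/H$ is projective, equivalently that $H$ is saturated in $G$. My approach would be to exploit the integral slope structure of $G$ recalled in Remark \ref{rem:FormelleSemiSimple}: the eigenvalues of the leading term $A_0(q)$ acting on $H$ reproduce (up to scaling) the $w-d$ pairwise distinct nonzero critical values of $f^{\circ}$ from Corollary \ref{coro:VPf}, so the spectral piece of $G$ complementary to these eigenvalues (carrying the contribution of vanishing cycles at infinity) should produce a canonical $\nabla$-stable free complement $H^{\circ}$. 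In the cases $n = 3, 4$ treated in Example \ref{sec:Q2P3}, and more generally whenever Conjecture \ref{conj:SingInftyHV} holds, Corollary \ref{coro:RangG} forces $\Rang G = n$, so $H = G$ and $H^{\circ} = 0$ and the decomposition is automatic.
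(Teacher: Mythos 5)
Your stages one and two are sound. Stage one matches the paper. For stage two the paper argues more directly---it reads the identification straight off the connection matrix in formula (\ref{eq:Birkhoff}), which under the correspondence $\varepsilon_i \leftrightarrow b_i$ is exactly the matrix of $b\circ$ displayed in example \ref{ex:ExQuadPn}---but your route through the cyclicity of $\varepsilon_0$ and the operator $P_H$ is a legitimate variant, and your computations of $N^{n}(\varepsilon_0)$ and $N^{n+1}(\varepsilon_0)$ check out against proposition \ref{prop:Birkhoff}.

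Stage three, however, has a genuine gap, and it is precisely the step the theorem is about. The ``spectral piece'' argument is not carried out: a decomposition of $G$ according to the eigenvalues of the leading term is in general only available formally (or locally) at the irregular point $\theta =0$, and you give no reason why it should descend to an algebraic direct-sum decomposition of $G$ over $\cit [\theta ,\theta^{-1}, q, q^{-1}]$, nor why the complementary piece would be free. Your fallback---that $H^{\circ}=0$ whenever conjecture \ref{conj:SingInftyHV} holds---only yields a conditional statement, whereas the theorem is unconditional. The missing observation is short: since $H$ is stable under $\nabla$ by proposition \ref{prop:Birkhoff}, the finite-type quotient $G/H$ inherits a connection, and a finite-type $\cit [\theta ,\theta^{-1}, q, q^{-1}]$-module equipped with a connection is automatically free (\cite[proposition 1.2.1]{S4}---the very fact already invoked in the proof of proposition \ref{prop:Birkhoff(suite)} to show $H$ is free). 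Freeness of $G/H$ makes it projective, so the exact sequence $0\rightarrow H\rightarrow G\rightarrow G/H\rightarrow 0$ splits, and one sets $H^{\circ}:=r(G/H)$ for a section $r$ of the projection. Note that this $H^{\circ}$ is only claimed to be a free complement, not a $\nabla$-stable one (the paper says so explicitly after the proof); your spectral approach was aiming for more than the theorem asserts.
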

\begin{proof}
The module $G/H$ is of finite type and therefore free because it is equipped with a connection as it follows from proposition
\ref{prop:Birkhoff}. We thus have the direct sum decomposition
$$G=H\oplus r(G/H)$$
where $r$ is a section of the projection $p:G\rightarrow G/H$. This gives the expected decomposition with $H^{\circ} := r(G/H)$.
The assertion about quantum cohomology follows from example \ref{ex:ExQuadPn} and formula (\ref{eq:Birkhoff}) {\em via} the correspondence $\varepsilon_{i}\leftrightarrow b_{i}$ where $b$ denotes the hyperplane class and $b_{i}$ the $i$-fold cup-product of
$b$ by itself.
\end{proof}

\noindent We expect $H^{\circ} =0$, see corollary \ref{coro:RangG}. Notice that we do not assert 
in the theorem that $H^{\circ}$ is equipped with a connection. 

\begin{remark}
$H$ has only two slopes, $0$ and $1$. In particular, $H$ is the (localized) Fourier transform of a regular holonomic module $M$ whose singular points run through $C(f)\cup \{0\}$.
Moreover, 
$$\Rang H = \Irr (H)+\Reg (H)$$
where $\Irr (H)=n-1$ and $\Reg (H)=d-1$: 
indeed, $Q(\omega_0 )=0$ where
$$Q=\theta^n (\nabla_{\theta\partial_{\theta}})^{n}-2q (n-1)^{n-1}n \theta (\nabla_{\theta\partial_{\theta}})+2q(n-1)^n\theta$$
and $\omega_0$ is cyclic.
\end{remark}

\begin{remark}
 (Metric) In order to get a whole quantum differential system 
it remains to construct a flat ``metric'' on $H$, see f.i \cite{DoMa}.
If $S$ is a $\nabla$-flat, non degenerate bilinear  form on $H_{0}$, then
\begin{align}\nonumber
 \left\{ \begin{array}{l}
S(\varepsilon_{i}, \varepsilon_{j}) =S(\varepsilon_{0}, \varepsilon_{n-1})\in\cit^* \theta^{n-1}\ \mbox{si}\ i+j=n-1\\
S(\varepsilon_{i}, \varepsilon_{j}) =0\ \mbox{otherwise}
\end{array}
\right .
\end{align}
Conversely, all flat metrics are of this kind: as $A_{0}$ is cyclic, one can argue as in \cite{DoSa2}. 
\end{remark}

\section{Appendix: small quantum cohomology of hypersurfaces in projective spaces (overview)}

We briefly recall here the definition of the small quantum cohomology of smooth hypersurfaces in projectives spaces alluded to in this paper. Our references are \cite{CK}, \cite{GuestSakai}, \cite{MM} and \cite{Sak}.

\subsection{Small quantum cohomology}
Given a Fano projective manifold $M$ and a homology class $A\in H_{2}(M; \zit )$ 
one defines Gromov-Witten invariants (three points, genus $0$) $GW_{A}:H^{*}(M; \cit)^{3}\rightarrow\cit$ which satisfy the following properties:\\

\noindent {\bf Linearity.} $GW_{A}$ is linear in each variable.\\

\noindent {\bf Effectivity.} $GW_{A}$ is zero if $\int_{A}\omega_{M}<0$, $\omega_{M}$ denoting the symplectic form on $M$.\\

\noindent {\bf Degree.} Let $x$, $y$ and $z$ be homogeneous cohomology classes. Then $GW_{A}(x,y,z)=0$ 
if
$$\deg x+\deg y +\deg z\neq 2\dim_{\cit}M +2<c_{1}(M), A>$$  
$c_{1}(M)$ denoting the first Chern class of  $M$ and $<x,A>=\int_{A}x$.\\

\noindent {\bf Initialisation.} $GW_{0}(x,y,z)=\int_{M}x\cup y\cup z$.\\

\noindent {\bf Divisor axiom.} If $z$ is a degree $2$ cohomology class one  has
$GW_{A}(x,y,z)=<z ,A>GW_{A}(x,y,1 )$.\\

Assume that the rank of $H^{2}(M;\zit )$
is $1$ and let $p$ be a generator of it. 
Let $b_{0},\cdots , b_{s}$ be a basis of $H^{*}(M;\cit )$ and  $b^{0},\cdots , b^{s}$ its Poincar\'e dual. 
The small quantum product $\circ_{tp}$ (for short $\circ$) is defined by
\begin{equation}\label{eq:DefProdQuant}
x\circ_{tp}y =\sum_{i=0}^{s}\sum_{A\in H_{2}(M;\zit )}GW_{A}(x,y,b_{i})q^{A}b^{i}
\end{equation}
where $q^{A}=\exp (tA)$. It follows from the Fano condition that the sum (\ref{eq:DefProdQuant}) is finite, see for instance \cite[Proposition 8.1.3]{CK}.

\subsection{Small quantum cohomology of hypersurfaces in (weighted) projective spaces}

\subsubsection{Projective spaces}

\label{sec:PCQ}

Assume that $M=X_{d}^{n}$ is a degree $d\geq 1$ smooth hypersurface in $\ppit^{n}$ and let
$i:X_{d}^{n}\hookrightarrow \ppit^{n}$ be the inclusion. 
Let $p\in H^{2}(\ppit^{n} ;\cit )$ be the hyperplane class and $b=i^{*}p$.  Then $c_{1}(X_{d}^{n})=(n+1 -d)b$.  In what follows, we will assume that $n+1-d>0$ (Fano case). We have
\begin{align}\label{eq:CohXd}
 \left\{ \begin{array}{l}
H^{m}(X_{d}^{n} ; \cit )=H^{m}(\ppit^{n} ; \cit )\ \mbox{si}\ m<n-1 \\
H^{m}(X_{d}^{n} ; \cit )=H^{m+2}(\ppit^{n} ; \cit )\ \mbox{si}\ m>n-1
\end{array}
\right .
\end{align}
In particular, $H^{2}(X_{d}^{n} ; \cit )=H^{2}(\ppit^{n} ; \cit )$ if $n\geq 4$. The cohomology ring is divided in two parts:\\

\noindent {\bf The ambient part.} This is the space $H_{amb}(X_{d}^{n} ; \cit ):=\im i^{*}$, where $i^* : H^{*}(\ppit^{n} ;\cit )\rightarrow H^{*}(X^{n}_{d} ;\cit )$.
We have $H_{amb}(X_{d}^{n} ; \cit )=\oplus_{i=0}^{n-1}\cit b_{i}$ where $b_{i}=b\cup \cdots \cup b$ ($i$-times) and this is a cohomology algebra of rank $n$.\\

\noindent {\bf The primitive part.} This is $P(X_{d}^{n}):=\ker i_{!}\subset H^{n-1}(X^{n}_{d} ; \cit )$, 
where $i_{!}:H^{n-1}(X^{n}_{d} ;\cit )\rightarrow H^{n+1}(\ppit^{n} ;\cit )$ is the Gysin morphism.\\

\noindent  The small quantum cohomology of $X_{d}^{n}$ preserves the ambient part 
$H_{amb}(X_{d}^n ; \cit ) $, see \cite{Pan}, \cite[Chapter 11]{CK}. 
We thus get a subring denoted by $QH_{amb}(X_{d}^{n} ; \cit )$, equipped with the product $\circ$ 
and which describes the small quantum product of cohomology classes coming from the ambient space $\ppit^{n}$: 
using the degree property we get, for $0\leq m\leq n-1$,  
\begin{equation}\label{eq:FormulePCQ}
b\circ b_{n-1-m}=b_{n-m} +\sum_{\ell \geq 1}L_{m}^{\ell}q^{\ell }b_{n-m-\ell (n+1-d)}
\end{equation}
and
\begin{equation}\label{eq:FormulePCQ0}
b\circ b_{n-1}=\sum_{\ell\geq 1}L_{0}^{\ell}q^{\ell}b_{n-\ell (n+1-d)} 
\end{equation}
where $L_{m}^{\ell}\in\cit$ and $q^{\ell}=\exp (t\ell A)$,
$A$ denoting a generator of $H_{2}(X_{d}^{n};\zit )$; the constants $L_{m}^{\ell}$ vanish unless $0\leq m\leq n-(n+1-d)\ell$ and we have
$\deg q =(n+1-d)$, which is positive in the Fano case. {\em This is this product that we consider in these notes}.

Last, let us make the link between the small quantum cohomology and the quantum differential operators defined in section \ref{sec:ODQ}. The differential system associated with $X_{d}^{
n}$ is
\begin{align}\label{eq:SystDiffCohQuant}
\left\{ \begin{array}{l} 
\theta q\partial_{q} \varphi_{n-1-m}(q)=\varphi_{n-m}(q)+\sum_{\ell\geq 1}L_{m}^{\ell}q^{\ell}\varphi_{n-m-\ell (n+1-d)}(q) \ \mbox{pour}\ m=1,\cdots ,n-1\\ 
\\
\theta q\partial_{q} \varphi_{n-1}(q)=\sum_{\ell\geq 1}L_{0}^{\ell}q^{\ell}\varphi_{n-\ell (n+1-d)}(q) 
\end{array}
\right .
\end{align}
\noindent see formula (\ref{eq:FormulePCQ}) and (\ref{eq:FormulePCQ0}). 
It follows from \cite{Giv} that this system can be written 
\begin{equation}\nonumber
P(\theta q\partial_q , q, \theta )\varphi_0 (q)=[(\theta q\partial_q )^{n}-qd^{d}(\theta q\partial_q +\frac{1}{d}\theta )\cdots (\theta q\partial_q +\frac{d-1}{d}\theta )]\varphi_{0}(q)=0
\end{equation}
In other words, the matrix of system (\ref{eq:SystDiffCohQuant}) is conjugated to a companion matrix whose characteristic polynomial is $P(X, q, \theta )$. This allows to compute the constants $L_{m}^{\ell}$. 

\begin{remark}\label{rem:relationPCQ}
A first consequence is the formula
\begin{equation}\label{eq:RelPCQ}
b^{\circ n} =qd^d b^{\circ d-1}
\end{equation}
see for instance \cite[page 364]{CK}, 
which reads $P(\theta q\partial_q , q, 0 )=0$ via the correspondences
\begin{equation}\label{eq:correspondance}
b\circ \leftrightarrow \theta q\partial_q\ \mbox{and}\ 1\leftrightarrow \varphi_0
\end{equation}
A justification is the following: in the basis
$(\varphi_0 ,\theta q\partial_{q}\varphi_0 , \cdots , (\theta q\partial_{q})^{n-1}\varphi_0 )$
the matrix of $\theta q\partial_q$ is
$\Omega_0 +\theta [\cdots ]$
where $\Omega_0$ is a matrix with coefficients in $\cit [q]$ and whose characteristic polynomial is $P(\theta q\partial_q , q, 0 )$. 
Up to conjugacy, the matrix $\Omega_0$ is also the one of $\theta q\partial_{q}$
in the basis  $(\varphi_0 , \cdots ,  \varphi_{n-1})$.
\end{remark}

\begin{example}
\label{ex:ExQuadPn}
Let us consider the quadric in $\ppit^{n}$. 
In the basis $(\varphi_0 , \cdots ,  \varphi_{n-1})$, the matrix of $\theta q\partial_{q}$ takes the form 
$$\left ( \begin{array}{cccc}
0  & \cdots & 2q & 0\\
1  & \cdots & 0 & 2q\\
0  & \cdots & 0 & 0\\
0  & \cdots & 1 & 0
\end{array}
\right )$$ 
It is also the matrix of $b\circ$, using the correspondences (\ref{eq:correspondance}).
\end{example}

\subsubsection{Weighted projective spaces}
\label{sec:PCQPoids}

Let $M=X_{d}^{w}$ be a degree $d\geq 1$ hypersurface in $\ppit (w):=\ppit (w_{0},\cdots ,w_{n})$, satisfying the assumptions of theorem \ref{theo:hyperlisse}. 
Let $i:X_{d}^{w}\hookrightarrow \ppit (w)$ be the inclusion, $p\in H^{2}(\ppit (w) ;\cit )$ the hyperplane class and $b=i^{*}p$. 
By proposition \ref{prop:adjonction}, the first Chern class $c_{1}(X_{d}^{w})$ is $(w -d)b$ and we will assume in what follows that $w-d>0$ 
(Fano case, recall that $w=w_0 +\cdots +w_n$).
The cohomology $H^{m}(\ppit (w) ; \cit )$ groups of the untwisted sector are of rank $1$ if $m$ 
is even, they vanish otherwise and 
\begin{align}
 \left\{ \begin{array}{l}
H^{m}(X_{d}^{w} ; \cit )=H^{m}(\ppit (w) ; \cit )\ \mbox{si}\ m<n-1 \\
H^{m}(X_{d}^{w} ; \cit )=H^{m+2}(\ppit (w) ; \cit )\ \mbox{si}\ m>n-1
\end{array}
\right .
\end{align}
\noindent see \cite[Corollary 2.3.6 et 4.2.2]{Dolgachev} and \cite[Theorem 7.2]{Fletcher}.
As before, we divide the cohomology ring $H^{*}(M;\cit )$ into an ambient part $H_{amb}(X_{d}^{w} ; \cit ):=\im i^{*}$, where
$i^* : H^{*}(\ppit (w) ;\cit )\rightarrow H^{*}(X^{w}_{d} ;\cit )$ and a primitive part.
We thus have $H_{amb}(X_{d}^{w} ; \cit )=\oplus_{i=0}^{n-1}\cit b_{i}$ where $b_{i}=b\cup \cdots \cup b$ ($i$-times). The small quantum product of $X_{d}^{w}$ 
should preserves this ambient part and one would at the end get a subring $QH_{amb}(X_{d}^{w} ; \cit )$, equipped with a product $\circ$.
The differential system associated with this small quantum product looks like
(compare with (\ref{eq:SystDiffCohQuant}))
\begin{align}\label{eq:SystDiffCohQuantPoids}
\left\{ \begin{array}{l} 
\theta q\partial_{q} \varphi_{n-1-m}(q)=\varphi_{n-m}(q)+\sum_{\ell\geq 1}L_{m}^{\ell}q^{\ell}\varphi_{n-m-\ell (w -d)}(q) \ \mbox{pour}\ m=1,\cdots ,n-1\\
\\
\theta q\partial_{q} \varphi_{n-1}(q)=\sum_{\ell\geq 1}L_{0}^{\ell}q^{\ell}\varphi_{n-\ell (w -d)}(q) 
\end{array}
\right .
\end{align}
\noindent where $q$ is now of degree $w-d>0$. 
Following \cite{Giv}, \cite{HV} and \cite[section 5]{GuestSakai} this systems should be equivalent to the equation $P_H (\varphi_{0}(q))=0$ where $P_H$ is the differential operator defined by formula (\ref{eq:OpDiffQuant}).
Again, one can derive from this the constants $L_{m}^{\ell}$ in terms of combinatorial data.
A consequence is the formula 
\begin{equation}\label{eq:RelCohQuantPoidsEx}
b^{\circ n} =q\frac{d^d}{\prod_{i=1}^{n}w_{i}^{w_{i}}} b^{\circ d+n-w}
\end{equation}
as in remark \ref{rem:relationPCQ}.

\end{document}